	\definecolor{egyptianblue}{rgb}{0.06, 0.2, 0.65}
	\definecolor{green(ncs)}{rgb}{0.0, 0.62, 0.42}
\titleformat{\chapter}[display]
{\normalfont\huge\bfseries}{\chaptertitlename\\thechapter}{20pt}{\Huge}
\titleformat{\paragraph}[runin]
{\normalfont\normalsize\bfseries}{\theparagraph}{1em}{}
\titleformat{\subparagraph}[runin]
{\normalfont\normalsize\bfseries}{\thesubparagraph}{1em}{}
\titlespacing*{\chapter} {0pt}{50pt}{40pt}
\titlespacing*{\section} {0pt}{3.5ex plus 1ex minus .2ex}{2.3ex plus .2ex}
\titlespacing*{\subsection} {0pt}{3.25ex plus 1ex minus .2ex}{1.5ex plus .2ex}
\titlespacing*{\subsubsection}{0pt}{3.25ex plus 1ex minus .2ex}{1.5ex plus .2ex}
\titlespacing*{\paragraph} {0pt}{3.25ex plus 1ex minus .2ex}{1em}
\titlespacing*{\subparagraph} {\parindent}{3.25ex plus 1ex minus .2ex}{1em}
\newtheorem{theorem}{Theorem}[section]
\newtheorem{lemma}[theorem]{Lemma}
\newtheorem{proposition}[theorem]{Proposition}
\newtheorem{corollary}[theorem]{Corollary}
\theoremstyle{definition}
\newtheorem{definition}[theorem]{Definition}
\newtheorem{notation}[theorem]{Notation}
\newtheorem{example}[theorem]{Example}
\theoremstyle{remark}
\newtheorem{remark}[theorem]{Remark}
\newtheorem{note}[theorem]{Note}
\DeclareMathOperator{\ide}{id}
\DeclareMathOperator{\Ide}{Id}
\DeclareMathOperator{\Ext}{Ext}
\DeclareMathOperator{\Harr}{Harr}
\DeclareMathOperator{\Soc}{Soc}
\DeclareMathOperator{\Z}{Z}
\DeclareMathOperator{\ima}{Im}
\DeclareMathOperator{\Ho}{H}
\DeclareMathOperator{\Hom}{Hom}
\DeclareMathOperator{\ho}{H}
\DeclareMathOperator{\M}{M}
\DeclareMathOperator{\Se}{S}
\DeclareMathOperator{\AS}{AS}
\newcommand{\xcirc}{\hspace{0.9pt}}
\newcommand{\ov}{\overline}
\newcommand{\ot}{\otimes}
\newcommand{\wh}{\widehat}
\newcommand{\wt}{\widetilde}
\numberwithin{equation}{section}
\DeclareMathAlphabet{\mathpzc}{OT1}{pzc}{m}{it}
\let\origmaketitle\maketitle
\def\maketitle{
	\begingroup
	\def\uppercasenonmath##1{} 
	\let\MakeUppercase\relax 
	\origmaketitle
	\endgroup
}
\begin{document}
	
\title{\large Cohomology of Linear Cycle Sets when the adjoint group is finite abelian}

\author[Jorge A. Guccione]{Jorge A. Guccione$^{1}$}
\address{$^{1}$ Pontificia Universidad Cat\'olica del Per\'u, Secci\'on Matem\'aticas, PUCP, Av. Universitaria 1801, San Miguel, Lima 32, Per\'u.}
\email{vander@dm.uba.ar}
	
\author[Jorge A. Guccione]{Juan J. Guccione$^{1}$}
\email{jjguccione@gmail.com}
	
\author[Christian Valqui]{Christian Valqui$^{1}$}
\email{cvalqui@pucp.edu.pe}

\thanks{Jorge A. Guccione, Juan J. Guccione and Christian Valqui were supported by PUCP-CAP 2023-PI0991}

\subjclass[2020]{55N35, 20E22, 16T25}
\keywords{Linear cycle sets, extensions, cohomology}

\begin{abstract}
This paper analyzes the second cohomology group $\ho^2_{\blackdiamond,\Yleft}(H,I)$, for linear cycle sets with commutative adjoint operation, focusing on the finite abelian case. It aims to classify extensions of such structures through cohomological methods. Techniques are developed to systematically construct explicitly $2$-cocycles. Finally, some illustrative examples are explored to validate the theoretical framework.	
\end{abstract}

\maketitle
	
\tableofcontents
	
\section*{Introduction}	

Motivated by the relevance of the Yang-Baxter equation (or equivalently the braid equation), in~\cite{Dr}, Drinfeld proposed the study of the set-theoretic solutions, which are related with many important mathematical structures such as affine torsors, solvable groups, Biberbach groups and groups of I-type, Artin-Schelter regular rings, Garside structures, biracks, Hopf algebras, left symmetric algebras, etcetera; see for example~\cites{CR,CJO,CJR,Ch,De1,DDM, GI2,GI4,GIVB,JO,S}.

An important class of set-theoretical solutions of the braid equation are the non-degenerate involutive solutions. The study of these solutions led to the introduction of the linear cycle set structures by Rump in~\cite{R2}. This notion is equivalent to the notion of braces and to the notion of bijective $1$-cocycles (see~\cites{CJO1,ESS,LV,R2,R3}). The study of extensions of linear cycle sets was done in the following papers~\cites{B,LV,GGV1}.

In the article~\cite{GGV1} it was proved that, in order to classify the classes of extensions of a lineal cycle set $H$ by~a~trivial cycle set $I$, we must determine all the maps $\blackdiamond\colon H\times I\to I$ and $\Yleft \colon I\times H\to I$ satisfying suitable conditions, and then, for each one of this pair of maps, to compute the second cohomology group $\ho_{\blackdiamond,\Yleft}^2(H,I)$, of a filtrated cochain complex ${\mathcal{C}}_{\blackdiamond,\Yleft}(H,I)$. In the same paper it was proved that the classification of the extensions $(\iota,B,\pi)$, in which $\iota(H)$ is included in the socle of $B$, is obtained considering the cases in which $\Yleft = 0$.

The aim of this paper is to obtain a description of $\ho_{\blackdiamond,\Yleft}^2(H,I)$, when in the associated brace of $H$, the operation~$\times$ (see~\eqref{equivalencia linear cycle set braza}) is commutative, i.e., the adjoint group is abelian.

The paper is organized as follows: in Section~\ref{section:Preliminaries} we make a brief review of the notions of the braces, linear cycle sets, extensions of linear cycle sets and cohomology of linear cycle sets. In Section~\ref{section 2}, we obtain the main results, namely, Theorems~\ref{teorema central} and \ref{cobordes Bne0}, and Corollary~\ref{calculo de la homo B'}, where we prove that the cohomology $\ho_{\blackdiamond,\Yleft}^2(H,I)$ is isomorphic to a subquotient of $I^{(s+1)n}$, which is explicitly computable in many cases. Here $n$ is the rank of the finite abelian ad\-ditive group of $H$ and $s$ is the rank of the adjoint group, which we assume to be also abelian. Moreover, in Subsection~\ref{subsection 2.3}, we provide a method to construct the $2$-cocycles in the complex ${\mathcal{C}}_{\blackdiamond,\Yleft}(H,I)$. By~\cite{GGV1}*{Corollary~7.10}, this allows us to construct all the corresponding extensions. In Section~\ref{section 3} we consider the case in which the linear cycle set~$H$ is trivial and we give several examples. \textcolor{red}{Finally, as an application, in Section~\ref{section 4} we obtain the extensions of a trivial linear cycle set, whose underlying group is a cyclic $p$-group, by a finite cycle $p$-group $I$, such that $I$ is included in socle of the extension. This constitutes the main result of~\cite{GGV2}, now with an improved presentation and streamlined proofs.}

\paragraph{Remark} \textcolor{red}{There have been several efforts to classify linear cycle sets (or, equivalently, braces) of fixed sizes; see, for instance, \cite{B2} for size~$p^3$, \cite{Di} for size~$p^2q$, \cite{Do} for size $p^4$, and~\cite{Mu} for size~$p^5$. When a non-trivial linear cycle set has a non-zero trivial ideal~$I$, it can be described as an extension of a linear cycle set~$H$, by~$I$. Given a linear cycle set~$H$ of cardinal~$n$, and an abelian group~$I$ of order~$m$, one obtains linear cycle sets of cardinal~$nm$, by determining the extension classes of~$H$, by~$I$. This requires identifying all maps $\blackdiamond\colon H \times I \to I$ and $\Yleft\colon I \times H \to I$ satisfying conditions~\eqref{eqq 1}--\eqref{eqq 3}, and then computing the associated second cohomology group~$\ho_{\blackdiamond,\Yleft}^2(H,I)$. The most challenging aspect of this process is the cohomology computation, which we  carry out in this paper (see Corollary~\ref{calculo de la homo B'}). For given~$H$ and~$I$, determining the admissible actions~$\blackdiamond$ and~$\Yleft$, can be particularly intricate. We will employ the method developed in Section~\ref{section 2}, to classify extensions of linear cycle sets by trivial ideals, in a series of forthcoming papers, focusing on specific families of linear cycle sets.}

\paragraph{Precedence of operations} The operations precedence in this paper is the following: the operators with the highest precedence are the unary operations $a\mapsto a^{\times n}$ (where $n\in \mathds{Z}$) and the binary operation $(a,b)\mapsto {}^ab$; then come the operations $\cdot$ and $\blackdiamond$, that have equal precedence; then the multiplication $(a,b)\mapsto a\times b$; then the operator~$\Yleft$, and finally, the sum. Of course, as usual, this order of precedence can be modified by the use of parenthesis.

\section[Preliminaries]{Preliminaries}\label{section:Preliminaries}

In this paper we work in the category of abelian groups, all the maps are $\mathds{Z}$-linear maps, $\ot$ means $\ot_{\mathds{Z}}$ and $\Hom$ means $\Hom_{\mathds{Z}}$.

\subsection[Braces and linear cycle sets]{Braces and linear cycle sets}\label{Braces and linear cycle sets}
A {\em left brace} is an additive abelian group $H$ endowed with an additional group operation $(h,l)\mapsto h\times l$, such that
\begin{equation}\label{compatibilidad braza}
h\times (l+m) = h\times l+h\times m-h\quad\text{for all $h,l,m\in H$.}
\end{equation}
The two group structures necessarily share the same neutral element, denoted by $0$. Of course, the additive inverse of $h$ is denoted by $-h$, while its multiplicative inverse is denoted by $\cramped{h^{\times-1}}$. In particular $\cramped{0^{\times-1}} = 0$.

Let $H$ and $H'$ be left braces. A map $f\colon H\to H'$ is an homomorphism of left braces if $f(h+l) = f(h)+f(l)$ and$~f(h\times l) = f(h)\times f(l)$, for all $h,l\in H$.

\smallskip

A {\em linear cycle set} is an additive abelian group $H$ endowed with a binary operation $\cdot$, having bijective left translations $h\mapsto l\cdot h$, and satisfying the conditions
\begin{equation}\label{compatibilidades cdot suma y suma cdot}
h\cdot (l+m)=h\cdot l + h\cdot m\quad\text{and}\quad (h+l)\cdot m = (h\cdot l)\cdot (h\cdot m),
\end{equation}
for all $h,l,m\in H$. It is well known that each linear cycle set is a cycle set. That is:
\begin{equation}
(h\cdot l)\cdot (h\cdot m)=(l\cdot h)\cdot (l\cdot m)\quad\text{for all $h,l,m\in H$}\label{condicion de cycle set}.
\end{equation}
Let $H$ and $H'$ be linear cycle sets. A map $f\colon H\to H'$ is an homomorphism of linear cycle sets if $f(h+l) = f(h)+f(l)$ and $f(h\cdot l)=f(h)\cdot f(l)$, for all $h,l\in H$.

\smallskip

The notions of left braces and linear cycle sets were introduced by Rump, who proved that they are equivalent via the relations
\begin{equation}
h\cdot l = \cramped{h^{\times-1}}\times (h+l)\quad\text{and}\quad h\times l = {}^hl + h,\label{equivalencia linear cycle set braza}
\end{equation}
where the map $l\mapsto {}^hl$ is the inverse of the map $l\mapsto h\cdot l$. Finally, by~\eqref{compatibilidades cdot suma y suma cdot} and~\eqref{equivalencia linear cycle set braza}, we have
\begin{equation}
(h\times l)\cdot m = (h + {}^hl)\cdot m = (h\cdot {}^hl)\cdot (h\cdot m) = l\cdot (h\cdot m),\label{accion tines con punto}
\end{equation}
which implies
\begin{equation}
{}^{h\times l} m = {}^h ({}^l m).\label{accion tines con inversa de punto}
\end{equation}

\smallskip

Each additive abelian group $H$ is a linear cycle set via $h\cdot l\coloneqq l$. These ones are the so called {\em trivial linear cycle sets}. Note that by~\eqref{equivalencia linear cycle set braza}, a linear cycle set $H$ is trivial if and only if $h\times l = h+l$, for all $h,l\in H$.

\smallskip

Let $H$ be a linear cycle set. An {\em ideal} of $H$ is a subgroup $I$ of $H$ such that $h\cdot y\in I$ and $y\cdot h - h\in I$, for all~$h\in H$ and $y\in I$. An ideal $I$ of $H$ is called a {\em central ideal} if $h\cdot y=y$ and $y\cdot h=h$, for all $h\in H$ and $y\in I$. The {\em socle} of $H$ is the ideal $\Soc(H)$, of all $y\in H$ such that $y\cdot h = h$, for all $h\in H$. The {\em center} of $H$ is the set $\Z(H)$, of all~$y\in \Soc(H)$ such that $h\cdot y = y$, for all $h\in H$. Thus, the center of $H$ is a central ideal of $H$, and each central ideal of $H$ is included in $\Z(H)$. Finally, an ideal $I$ of $H$ is {\em trivial} if $h\cdot l=l$, for all $h,l\in I$.

\begin{notation}\label{Yleft} Given an ideal $I$ of a linear cycle set $H$, for each $y\in I$ and $h\in H$, we set $y\Yleft h\coloneqq y\cdot h-h$.
\end{notation}

\subsection[Extensions of linear cycle sets]{Extensions of linear cycle sets}\label{Extensions of linear cycle sets}

Let $I$ and $H$ be linear cycle sets. An {\em extension} $(\iota,B,\pi)$, of $H$ by $I$, is a short exact sequence
\begin{equation}
    \begin{tikzpicture}
    \begin{scope}[yshift=0cm,xshift=0cm, baseline]
			\matrix(BPcomplex) [matrix of math nodes, row sep=0em, text height=1.5ex, text
			depth=0.25ex, column sep=2.5em, inner sep=0pt, minimum height=5mm, minimum width =6mm]
			{0 & I & B & H & 0,\\};
			\draw[-{latex}] (BPcomplex-1-1) -- node[above=1pt,font=\scriptsize] {} (BPcomplex-1-2);
			\draw[-{latex}] (BPcomplex-1-2) -- node[above=1pt,font=\scriptsize] {$\iota$} (BPcomplex-1-3);
			\draw[-{latex}] (BPcomplex-1-3) -- node[above=1pt,font=\scriptsize] {$\pi$} (BPcomplex-1-4);
			\draw[-{latex}] (BPcomplex-1-4) -- node[above=1pt,font=\scriptsize] {} (BPcomplex-1-5);
    \end{scope}\label{sucesion exacta corta de grupos abelianos}
    \end{tikzpicture}
\end{equation}
of additive abelian groups, in which $B$ is a linear cycle set and both $\iota$ are $\pi$ are linear cycle sets morphisms. Two extensions $(\iota,B,\pi)$ and $(\iota',B',\pi')$, of $H$ by $I$, are {\em equivalent} if there exists a linear cycle set morphism $\phi\colon B\to B'$ such that $\pi'\xcirc \phi=\pi$ and $\phi\xcirc \iota=\iota'$. As in the case of extension of groups, necessarily $\phi$ is an isomorphism.


Assume that~\eqref{sucesion exacta corta de grupos abelianos} is an extension of linear cycle sets and let $s$ be a section of $\pi$ with $s(0)=0$. For the sake of simplicity, in the rest of this section we identify $I$ with $\iota(I)$ and we write $y$ instead of $\iota(y)$. For each~$b\in B$, there exist unique $y\in I$ and $h\in H$ such that $b=y+s(h)$. Let $\blackdiamond\colon H\times I\to I$ and $\Yleft \colon I\times H\to I$ be the maps defined by
$$
h\blackdiamond y \coloneqq s(h)\cdot y\quad\text{and}\quad y\Yleft h\coloneqq y\Yleft s(h) = y\cdot s(h)-s(h),
$$
respectively. From now on we assume that $I$ is trivial. By~\cite{GGV1}*{Proposition~3.3}, this implies that $\blackdiamond$ and $\Yleft$ do not depend on~$s$. Let $\beta\colon H\times H\to I$ and $f\colon H\times H\to I$ be the unique maps such that
\begin{align}
&y+s(h) + z+s(l)= y + z+ s(h) + s(l)= y+z + \beta(h,l) + s(h+l) \label{construccion de beta}\\
\shortintertext{and}
&(y+s(h))\cdot (z+s(l))= h\blackdiamond z + f(h,l) + h\blackdiamond y \Yleft h\cdot l + s(h\cdot l), \label{formula para cdot en I times H}
\end{align}
for $y,z\in I$ and $h,l\in H$.

\begin{notation}\label{notacion ext} Fix maps $\blackdiamond\colon H\times I\to I$ and $\Yleft \colon I\times H\to I$. We let $\Ext_{\blackdiamond,\Yleft}(H;I)$ denote the set of equivalence classes of  extensions $(\iota,B,\pi)$, of $H$ by $I$, such that $s(h)\cdot y = h\blackdiamond y$ and $y\cdot s(h)-s(h) = y\Yleft h$, for every section $s$ of $\pi$ satisfying $s(0) = 0$.
\end{notation}

\subsection[Building extensions of linear cycle sets]{Building extensions of linear cycle sets}\label{Building extensions of linear cycle sets}
Let $H$ be a linear cycle set and let $I$ be an additive abelian group. Given maps $\blackdiamond\colon H\times I\to I$, $\Yleft \colon I\times H\to I$, $\beta\colon H\times H\to I $ and $f\colon H\times H\to I$, we let $I\times_{\beta,f}^{\blackdiamond,\Yleft} H$ denote $I\times H$, endowed with the binary operations $+$ and $\cdot$ defined by
\begin{align}
&(y+w_h) + (z+w_l) = y+z+\beta(h,l)+ w_{h+l}\label{formula para suma en I times H}\\
\shortintertext{and}
&(y+w_h)\cdot (z+w_l)= h\blackdiamond z + f(h,l) + h\blackdiamond y \Yleft h\cdot l + w_{h\cdot l},\label{formula para cdot en I times H'}
\end{align}
where we are writing $y$ instead of $(y,0)$ and $w_h$ instead of $(0,h)$.

\smallskip

Let $\triangleleft\colon I \times H\to I$ be the map defined by $y\triangleleft h\coloneqq  h \blackdiamond (y - y\Yleft h)$. For each $y\in I$ and $h\in H$, set $y^h\coloneqq y - y\Yleft h$. Consider $I$ endowed with the trivial linear cycle set structure and let $\iota\colon I\to I\times H$ and $\pi\colon I\times H\to H$ be the canonical maps. By the discussion at the beginning of~\cite{GGV1}*{Section~4} and~\cite{GGV1}*{Theorem~5.6 and Remark~5.12} we know that $(\iota,I\times_{\beta,f}^{\blackdiamond,\Yleft} H, \pi)$ is an extension of linear cycle sets if and only if the following conditions are fulfilled:
\begin{align}
& y \triangleleft (h\times l) = (y \triangleleft h) \triangleleft l,\quad (y + z)\triangleleft h = y\triangleleft h + z\triangleleft h,\quad y \triangleleft 0 = y,\label{eqq 1}\\
& (l\times h)\blackdiamond y  = h\blackdiamond (l \blackdiamond y),\quad h\blackdiamond (y + z) = h\blackdiamond y + h\blackdiamond z, \quad 0\blackdiamond y = y,\label{eqq 2}\\
& y^{h+l} + y = y^h + y^l,\quad 0^h = 0,\label{eqq 3}\\
& \beta(h,l)+\beta(h+l,m) = \beta(l,m)+\beta(h,l+m),\label{condicion de cociclo}\\
&\beta(h,0) = \beta(0,h) = 0,\label{normalidad}\\
&\beta(h,l) = \beta(l,h),\label{normalidad y cociclo abeliano}\\
& h\blackdiamond \beta(l,m) + f(h,l+m) = f(h,l)+f(h,m)+\beta(h\cdot l,h\cdot m)\label{eqq 4}
\shortintertext{and}
&\begin{aligned}\label{eqq5}
f(h+l,m) + (h+l)\blackdiamond \beta(h,l)\Yleft (h+l)\cdot m & = (h\cdot l)\blackdiamond f(h,m) + f(h\cdot l,h\cdot m)\\
&+ (h\cdot l)\blackdiamond f(h,l) \Yleft (h\cdot l)\cdot(h\cdot m).
\end{aligned}
\end{align}
for all $h,l,m\in H$ and $y,z\in I$. 

\smallskip

By~\cite{GGV1}*{Remark~5.14} two extensions $(\iota,I\times_{\beta,f}^{\blackdiamond,\Yleft} H,\pi)$ and $(\iota,I\times_{\beta',f'}^{\blackdiamond,\Yleft} H,\pi)$, of $H$ by $I$, are equivalent if and only if there exists a map $\varphi\colon H\to I$ satisfying
\begin{align}
&\varphi(0) = 0,\qquad \varphi(h)-\varphi(h+l)+\varphi(l)=\beta(h,l) - \beta'(h,l)\label{equivalencia a nivel de qrupos}
\shortintertext{and}
&\varphi(h\cdot l) + f(h,l) = h\blackdiamond \varphi(l) + f'(h,l) + h\blackdiamond \varphi(h)\Yleft h\cdot l,\label{tercera condicion caso I trivial}
\end{align}
for all $h,l\in H$.

\smallskip

By~\cite{GGV1}*{Remark~4.9}, every extension $(\iota,B,\pi)$, of $H$ by $I$ is equivalent to an extension of the form $(\iota,I\times_{\beta,f}^{\blackdiamond,\Yleft} H,\pi)$.

\subsection[Cohomology of linear cycle sets]{Cohomology of linear cycle sets}

Let $H$ be a linear cycle set and let $I$ be an additive abelian group, considered as a trivial linear cycle set. Fix two maps $\blackdiamond\colon H\times I\to I$ and $\Yleft \colon I\times H\to I$. Assume that conditions~\eqref{eqq 1}--\eqref{eqq 3} are fulfilled. We claim that this is equivalent to requiring that $\Yleft$ be bilinear, condition~\eqref{eqq 2} is satisfied and
\begin{equation}\label{triangulo compatible con times}
h\blackdiamond y \Yleft h\cdot l = h\blackdiamond (y\Yleft l) + h\blackdiamond (y\Yleft h) \Yleft h\cdot l\qquad\text{for all $h,l\in H$ and $y\in I$.}
\end{equation}
In fact, by~\cite{GGV1}*{Remark~5.12}, conditions~\eqref{eqq 1}--\eqref{eqq 3} hold, if and only if items~(1)–(4) of~\cite{GGV1}*{Proposition~3.2} and conditions~[5.1], [5.2], [5.5] and~[5.6] of~\cite{GGV1} hold. However, conditions~[5.1] and~[5.5] say that $\Yleft$ is bilinear, which implies items~(3) and~(4) of~\cite{GGV1}*{Proposition~3.2}. The first identity in~\cite{GGV1}*{condition~[5.2]} is automatically fulfilled, while the second one is equivalent to~\cite{GGV1}*{condition~[5.3]}, which is the first identity in~\eqref{eqq 2}. Items~(1) and~(2) of~\cite{GGV1}*{Proposition~3.2} are the second and third identities in~\eqref{eqq 2}; and finally,~\cite{GGV1}*{condition~[5.6]} is~\eqref{triangulo compatible con times}.

In~\cite{GGV1}*{Section~7} we constructed a diagram $(\wh{C}_N^{**}(H,I),\partial_{\mathrm{h}},\partial_{\mathrm{v}},D)$, illustrated in~\cite{GGV1}*{Figure~1}. By applying to this diagram the same procedure used to form the total complex of a double complex, we obtain a cochain complex $(\wh{C}_N^*(H,I),\partial+D)$, whose second cohomology group $\Ho^2_{\blackdiamond,\Yleft}(H,I)$ is canonically isomorphic to $\Ext_{\blackdiamond,\Yleft}(H;I)$ (see~\cite{GGV1}*{Proposition~7.9 and Corollary~7.10}). Our purpose is to compute the group $\Ho^2_{\blackdiamond,\Yleft}(H,I)$. For this, we will need only the following part of $(\wh{C}_N^{**}(H,I),\partial_{\mathrm{h}},\partial_{\mathrm{v}},D)$, where we write~$\wh{C}_N^{**}$ instead of $\wh{C}_N^{**}(H,I)$:
\begin{equation}\label{diagrama1}
\begin{tikzcd}[row sep=3.2em, column sep=5.2em, scale=1.4]
\wh{C}_N^{03}\\
\wh{C}_N^{02} \arrow[r, "\partial_{\mathrm{h}}^{12}"] \arrow[u, "\partial_{\mathrm{v}}^{03}"] \arrow[rrd, green(ncs), shift left=0ex, end anchor={[yshift=0.5ex]}, outer sep=-1.6pt, "D_{02}^{21}" pos=0.25] & \wh{C}_N^{12} \\
\wh{C}_N^{01} \arrow[r, "\partial_{\mathrm{h}}^{11}"{below}] \arrow[u, "\partial_{\mathrm{v}}^{02}"] \arrow[r, green(ncs), shift left=0.5ex, outer sep=-1.4pt, "D_{01}^{11}"]  & \wh{C}_N^{11} \arrow[r, "\partial_{\mathrm{h}}^{21}"{below}] \arrow[u, crossing over, "\partial_{\mathrm{v}}^{12}" {yshift=6pt, swap}] \arrow[r, green(ncs), shift left=0.5ex, outer sep=-1.4pt, "D_{11}^{21}" pos=0.4] & \wh{C}_N^{21}
\end{tikzcd}
\end{equation}
Let $\ov{H}\coloneqq H\setminus\{0\}$. Recall from~\cite{GGV1}*{Sections~6 and~7}, that
\begin{align*}
&\wh{C}_N^{01} \simeq \{f\colon \ov{H}\to I\},\quad \wh{C}_N^{11} \simeq \{f\colon \ov{H}^2\to I\},\quad \wh{C}_N^{21} \simeq \{f\colon \ov{H}^3\to I\},\\
&\wh{C}_N^{02} \simeq \{f\colon \ov{H}^2\to I : f(h,l) = f(l,h)\},\quad \wh{C}_N^{12} \simeq \{f\colon \ov{H}^3\to I : f(h,l,m) = f(h,m,l)\},\\
&\wh{C}_N^{03} \simeq \{f\colon \ov{H}^3\to I : f(h,l,m) = f(l,h,m)-f(l,m,h)= f(h,m,l)-f(m,h,l)\},
\end{align*}
and that, for $h,l,m\in H$, $t\in \wh{C}_N^{01}$, $\beta\in \wh{C}_N^{02}$ and $f\in \wh{C}_N^{11}$, we have
\begin{align*}
&\partial_{\mathrm{v}}^{03}(\beta)(h,l,m)=\beta(l,m) - \beta(h+l,m) + \beta(h,l+m) - \beta(h,l), \\
&\partial_{\mathrm{v}}^{02}(t)(h,l)=t(l)-t(h+l)+t(h), \\
&\partial_{\mathrm{v}}^{12}(f)(h,l,m) =-f(h,m)+ f(h,l+m)-f(h,l),\\
&\partial_{\mathrm{h}}^{12}(\beta)(h,l,m)=\beta(h\cdot l,h\cdot m)-h\blackdiamond\beta(l,m),\\
&(\partial_{\mathrm{h}}^{11}+ D_{01}^{11})(t)(h,l) = t(h\cdot l)-h\blackdiamond t(l)-l\blackdiamond t(h)\Yleft h\cdot l,\\
&(\partial_{\mathrm{h}}^{21}+D_{11}^{21})(f)(h,l,m) = f(h\cdot l,h\cdot m)-f(h+l,m)+(h\cdot l)\blackdiamond f(h,m)+(h\cdot l)\blackdiamond f(h,l)\Yleft (h+l)\cdot m\\
\shortintertext{and}
& D_{02}^{21}(\beta)(h,l,m) = (h+l)\blackdiamond \beta(h,l)\Yleft (h+l)\cdot m,
\end{align*}
with the understanding that $t(0) = \beta(0,h) = \beta(h,0) = f(0,h) = f(h,0) = 0$, for all $h\in \ov{H}$.

\smallskip

Note that $\{f\colon \ov{H}^r\to I\}$ can be canonically identified with the set of all $f\colon H^r\to I$, such that $f(h_1,\dots,h_r) = 0$, if some $h_i=0$. We will use this freely.

\begin{remark} When $\Yleft = 0$, we will write $\Ext_{\blackdiamond}(H;I)$ instead of $\Ext_{\blackdiamond,0}(H;I)$ and $\Ho^2_{\blackdiamond}(H,I)$ instead of $\Ho^2_{\blackdiamond,0}(H,I)$. Recall that in an extension $(\iota,I\times_{\beta,f}^{\blackdiamond,\Yleft} H,\pi)$, the map $\Yleft = 0$ if and only if $I$ is included in the socle of $I\times_{\beta,f}^{\blackdiamond,\Yleft} H$.
\end{remark}

\section[Cohomology of linear cycle sets]{Computing the second group of cohomology}\label{section 2}

Let $H$ be a linear cycle set and let $I$ be an abelian group. By the comments at the beginning of~\cite{GGV1}*{Section~7} and by~\cite{GGV1}*{Corollary~7.10}, in order to classify the classes of the extensions of a lineal cycle set $H$ by a trivial cycle set $I$, we must determine all the maps $\blackdiamond\colon H\times I\to I$ and $\Yleft \colon I\times H\to I$ such that the conditions~\eqref{eqq 1}--\eqref{eqq 3} are satisfied, and then, for each one of this pair of maps, we must compute the second cohomology group $\ho_{\blackdiamond,\Yleft}^2(H,I)$. In this section we fix maps $\blackdiamond$ and $\Yleft$ as above, and we obtain a description of $\Ho^2_{\blackdiamond,\Yleft}(H,I)$, under the hypothesis that~$H$ is a finite abelian group via $\times$.

\subsection[The cohomology of the first column]{The cohomology of the first column}
In this subsection we compute the cohomology groups $\Harr^1(H,I)\coloneqq \ker(\partial_{\mathrm{v}}^{02})$ and $\Harr^2(H,I)\coloneqq \frac{\ker(\partial_{\mathrm{v}}^{03})} {\ima(\partial_{\mathrm{v}}^{02})}$ of the first col\-umn of the diagram~\eqref{diagrama1}, when $H$ is finite. To
begin with, we note that this cohomology groups only depend on the additive structure of $H$.

\smallskip

In the following propositions, we assume that $H$ is the direct sum of two additive abelian groups, $H_1$ and $H_2$, which are not necessarily sublinear cycle sets of $H$. The canonical projections and inclusions $p_1\colon H\to H_1$, $p_2\colon H\to H_2$, $\iota_1\colon H_1\to H$ and $\iota_2\colon H_2\to H$, induce morphisms $\iota_1^*$, $\iota_2^*$, $p_1^*$ and $p_2^*$ between the cochain complexes $\bigl(\wh{C}_N^{0*}(H_1,I),\partial_{\mathrm{v}}^{0*}\bigr)$, $\bigl(\wh{C}_N^{0*}(H_2,I),\partial_{\mathrm{v}}^{0*}\bigr)$ and $\bigl(\wh{C}_N^{0*}(H,I),\partial_{\mathrm{v}}^{0*}\bigr)$. Clearly
\begin{equation}\label{dos comp}
\iota_1^*\xcirc p_1^*=\Ide_{(\wh{C}_N^{0*}(H_1,I),\partial_{\mathrm{v}}^{0*})}\quad\text{and}\quad\iota_2^*\xcirc p_2^* = \Ide_{(\wh{C}_N^{0*}(H_1,I),\partial_{\mathrm{v}}^{0*})}.
\end{equation}
We let $\bar{\iota}_1$, $\bar{\iota}_2$, $\bar{p}_1$ and $\bar{p}_2$ denote the maps induced by $\iota_1^*$, $\iota_2^*$, $p_1^*$ and $p_2^*$, respectively, between $\Harr^2(H_1,I)$, $\Harr^2(H_2,I)$ and $\Harr^2(H,I)$.

\begin{proposition} If $H = H_1\oplus H_2$, then $\Harr^1(H,I) = \Harr^1(H_1,I)\oplus \Harr^1(H_2,I)$.
\end{proposition}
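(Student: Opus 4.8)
The plan is to reduce the assertion to the classical behavior of $\Hom(-,I)$ with respect to direct sums, exploiting the fact, already stressed in the text, that $\partial_{\mathrm{v}}^{02}$ depends only on the additive structure of $H$. First I would describe $\ker(\partial_{\mathrm{v}}^{02})$ explicitly. Since
\begin{equation*}
\partial_{\mathrm{v}}^{02}(t)(h,l) = t(l) - t(h+l) + t(h),
\end{equation*}
a cochain $t\in\wh{C}_N^{01}$ belongs to $\Harr^1(H,I)=\ker(\partial_{\mathrm{v}}^{02})$ exactly when $t(h+l)=t(h)+t(l)$ for all $h,l\in H$, which in turn forces $t(0)=0$. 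Hence such a $t$ is precisely a homomorphism of additive abelian groups, and I obtain canonical identifications $\Harr^1(H,I)=\Hom(H,I)$ and $\Harr^1(H_i,I)=\Hom(H_i,I)$ for $i=1,2$, where $\Hom$ denotes additive homomorphisms. Under these identifications $\iota_i^*$ and $p_i^*$ are simply precomposition with $\iota_i$ and $p_i$.

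Next I would show that
\begin{equation*}
(\iota_1^*,\iota_2^*)\colon \Harr^1(H,I)\longrightarrow \Harr^1(H_1,I)\oplus\Harr^1(H_2,I),\qquad t\longmapsto(\iota_1^* t,\,\iota_2^* t),
\end{equation*}
is an isomorphism, with inverse $(t_1,t_2)\mapsto p_1^* t_1+p_2^* t_2$. Both composites collapse to the identity by a short computation. From $p_i\xcirc\iota_i=\ide_{H_i}$ and $p_j\xcirc\iota_i=0$ for $i\neq j$ one gets $\iota_i^*\xcirc p_j^*=\Ide$ when $i=j$ (this is~\eqref{dos comp}) and $0$ otherwise; this shows that $(\iota_1^*,\iota_2^*)$ followed by the proposed inverse is the identity of $\Harr^1(H_1,I)\oplus\Harr^1(H_2,I)$. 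In the other direction, the splitting relation $\iota_1\xcirc p_1+\iota_2\xcirc p_2=\ide_H$ yields $p_1^*\xcirc\iota_1^*+p_2^*\xcirc\iota_2^*=\Ide$, so $p_1^*(\iota_1^* t)+p_2^*(\iota_2^* t)=t$ for every $t$.

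There is essentially no serious obstacle here: once $\Harr^1$ is identified with $\Hom$, the statement is just the additivity of the contravariant functor $\Hom(-,I)$ on a finite direct sum. The only points worth a remark are that $\iota_i^*$ and $p_i^*$, being chain maps, automatically carry $\ker(\partial_{\mathrm{v}}^{02})$ into $\ker(\partial_{\mathrm{v}}^{02})$, so the displayed maps are well defined; and that the whole verification rests solely on the relations $p_i\xcirc\iota_i=\ide_{H_i}$, $p_j\xcirc\iota_i=0$ $(i\neq j)$ and $\iota_1\xcirc p_1+\iota_2\xcirc p_2=\ide_H$ afforded by the direct-sum decomposition. I expect the genuine work to surface only in the companion statement for $\Harr^2$, where $\Hom(-,I)$ is no longer exact and one must keep track of $\ima(\partial_{\mathrm{v}}^{02})$; for $\Harr^1$ the argument is immediate.
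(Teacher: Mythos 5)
Your proof is correct and takes essentially the same approach as the paper: the cocycle condition $t(l)-t(h+l)+t(h)=0$ identifies $\Harr^1(H,I)$ with $\Hom(H,I)$, and the claim then reduces to the additivity of $\Hom(-,I)$ on a direct sum. The only difference is that you spell out the mutually inverse maps $(\iota_1^*,\iota_2^*)$ and $(t_1,t_2)\mapsto p_1^*t_1+p_2^*t_2$ explicitly, whereas the paper simply invokes the standard isomorphism $\Hom(H_1\oplus H_2,I)\simeq\Hom(H_1,I)\oplus\Hom(H_2,I)$.
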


\begin{proof} In fact,
$$
\Harr^1(H,I)\simeq \Hom(H,I) \simeq \Hom(H_1,I)\oplus \Hom(H_2,I) = \Harr^1(H_1,I)\oplus \Harr^1(H_2,I),
$$
as desired.
\end{proof}

\begin{proposition}\label{Harr^2 es aditivo} If $H=H_1\oplus H_2$, then $\Harr^2(H,I)=\Harr^2(H_1,I)\oplus\Harr^2(H_2,I)$. Under this correspondence, the class $[\beta]\in\Harr^2(H,I)$, of a~nor\-malized symmetric cocycle $\beta$, is identified with the class of $(\beta_1,\beta_2)$, where
$$
\beta_1(h_1,l_1)\coloneqq \beta((h_1,0),(l_1,0))\quad\text{and}\quad \beta_2(h_2,l_2)\coloneqq \beta((0,h_2),(0,l_2)).
$$
\end{proposition}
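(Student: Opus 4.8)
The plan is to exhibit mutually inverse isomorphisms between $\Harr^2(H,I)$ and $\Harr^2(H_1,I)\oplus\Harr^2(H_2,I)$ out of the maps already introduced. I would set $\Phi\colon\Harr^2(H,I)\to\Harr^2(H_1,I)\oplus\Harr^2(H_2,I)$, $\Phi([\beta])\coloneqq(\bar\iota_1[\beta],\bar\iota_2[\beta])$, and in the opposite direction $\Psi([\beta_1],[\beta_2])\coloneqq\bar p_1[\beta_1]+\bar p_2[\beta_2]$. Since $\iota_1^*\beta(h_1,l_1)=\beta((h_1,0),(l_1,0))$ and $\iota_2^*\beta(h_2,l_2)=\beta((0,h_2),(0,l_2))$, the map $\Phi$ sends $[\beta]$ precisely to $([\beta_1],[\beta_2])$ with $\beta_1,\beta_2$ as in the statement; hence the asserted identification of classes is automatic once $\Phi$ is shown to be an isomorphism.

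First I would check $\Phi\circ\Psi=\Ide$, which amounts to $\bar\iota_i\circ\bar p_j=\delta_{ij}\Ide$. The diagonal identities $\bar\iota_1\bar p_1=\Ide$ and $\bar\iota_2\bar p_2=\Ide$ hold already at the cochain level by~\eqref{dos comp}. For the off-diagonal ones, $\iota_1^*p_2^*(\beta_2)(h_1,l_1)=\beta_2\bigl(p_2(h_1,0),p_2(l_1,0)\bigr)=\beta_2(0,0)=0$ by normalization, and symmetrically for $\iota_2^*p_1^*$; thus $\bar\iota_1\bar p_2=0=\bar\iota_2\bar p_1$.

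The main point is $\Psi\circ\Phi=\Ide$, i.e. that for every normalized symmetric cocycle $\beta$ the cochain $\gamma\coloneqq\beta-p_1^*\iota_1^*\beta-p_2^*\iota_2^*\beta$ is a coboundary. The key lemma I would isolate is: \emph{a normalized symmetric $\partial_{\mathrm v}^{03}$-cocycle $\gamma$ on $H$ that vanishes on $H_1\times H_1$ and on $H_2\times H_2$ lies in $\ima(\partial_{\mathrm v}^{02})$}. This is where the work is, and I expect it to be the main obstacle, since it is exactly the point at which the ``mixed'' values $\gamma\bigl((h_1,0),(0,l_2)\bigr)$ must be organized into a coboundary. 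The cleanest route I see is to reinterpret $\gamma$ as an abelian group extension: because $\gamma$ is symmetric, normalized and a cocycle, the set $I\times H$ with addition $(x,h)+(y,k)\coloneqq(x+y+\gamma(h,k),h+k)$ is an abelian group $E$, and the maps $h_1\mapsto(0,(h_1,0))$ and $h_2\mapsto(0,(0,h_2))$ are homomorphisms $H_1\to E$ and $H_2\to E$ (precisely because $\gamma$ vanishes on each factor). Combining them yields a homomorphism $H=H_1\oplus H_2\to E$, $(h_1,h_2)\mapsto(0,(h_1,0))+(0,(0,h_2))=\bigl(\gamma((h_1,0),(0,h_2)),\,(h_1,h_2)\bigr)$, which is a section of the projection $E\to H$; the statement that this section is a homomorphism is exactly $\gamma=-\partial_{\mathrm v}^{02}(t)$ for $t(h)\coloneqq\gamma\bigl((h_1,0),(0,h_2)\bigr)$, so $\gamma\in\ima(\partial_{\mathrm v}^{02})$.

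Finally I would verify the hypotheses of the lemma for this $\gamma$. The cochains $p_1^*\iota_1^*\beta$ and $p_2^*\iota_2^*\beta$ are cocycles, being images of the cocycle $\beta$ under the chain maps induced by the homomorphisms $\iota_j$ and $p_j$, so $\gamma$ is a cocycle; moreover on $H_1\times H_1$ one has $p_1^*\iota_1^*\beta=\beta$ while $p_2^*\iota_2^*\beta=0$ by normalization, so $\gamma$ vanishes there, and symmetrically on $H_2\times H_2$. The lemma then gives $[\gamma]=0$, whence $[\beta]=\bar p_1\bar\iota_1[\beta]+\bar p_2\bar\iota_2[\beta]=\Psi\Phi([\beta])$. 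Together with $\Phi\Psi=\Ide$ this shows $\Phi$ and $\Psi$ are mutually inverse isomorphisms carrying $[\beta]$ to $([\beta_1],[\beta_2])$, as claimed.
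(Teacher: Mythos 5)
Your proof is correct, and its overall skeleton coincides with the paper's: both arguments reduce the statement to the identity $\bar p_1\circ\bar\iota_1+\bar p_2\circ\bar\iota_2=\Ide_{\Harr^2(H,I)}$ (your $\Psi\circ\Phi=\Ide$), the other composite being immediate from~\eqref{dos comp} together with $p_2\circ\iota_1=0=p_1\circ\iota_2$ and normalization, exactly as you note. Where you genuinely diverge is in how that identity is established. The paper verifies directly, by three applications of the cocycle identity~\eqref{beta es cociclo} plus symmetry of $\beta$, that $p_1^*(\beta_1)+p_2^*(\beta_2)-\beta=\partial_{\mathrm{v}}^{02}(t)$ for $t(h_1,h_2)\coloneqq\beta((h_1,0),(0,h_2))$. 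You instead isolate the lemma that a normalized symmetric cocycle $\gamma$ vanishing on $H_1\times H_1$ and on $H_2\times H_2$ cobounds, and prove it through the dictionary between such cocycles and abelian group extensions: the set $I\times H$ with $\gamma$-twisted addition is an abelian group $E$ (associativity is the cocycle condition, commutativity is where symmetry enters), the two coordinate sections $H_1\to E$ and $H_2\to E$ are homomorphisms precisely because $\gamma$ kills each factor, and their sum is then a homomorphic section of $E\to H$, which is exactly the statement that $\gamma$ lies in $\ima(\partial_{\mathrm{v}}^{02})$. It is worth noting that your argument secretly produces the same cobounding cochain as the paper: since the two subtracted pullbacks vanish on pairs of the form $((h_1,0),(0,h_2))$ by normalization, your $t(h)=\gamma((h_1,0),(0,h_2))$ equals $\beta((h_1,0),(0,h_2))$, up to the harmless sign $\gamma=\partial_{\mathrm{v}}^{02}(-t)$. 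The trade-off is clear: the paper's computation is elementary and stays entirely inside the complex~\eqref{diagrama1}, whereas your route outsources the computation to the (standard, but in your write-up unproved) facts that the twisted addition is an associative, commutative group law and that a homomorphic section splits the cocycle; in exchange, it explains conceptually why symmetry of $\beta$ is needed and makes the proposition transparent: $\Harr^2$ classifies abelian extensions, and an abelian extension of $H_1\oplus H_2$ by $I$ that splits over each summand splits globally.
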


\begin{proof} We will prove that $\Harr^2(-,I)$ is an additive functor. By identity~\eqref{dos comp}, for this we only must check that $\bar{p}_1\xcirc \bar{\iota}_1 + \bar{p}_2\xcirc \bar{\iota}_2=\Ide_{\Harr^2(H,I)}$. Let $\beta$, $\beta_1$ and $\beta_2$ be as in the statement. Note that $[\beta_i]= \bar{\iota}_i([\beta])$, and so we must prove that $[\beta]=\bar{p}_1([\beta_1])+\bar{p}_2([\beta_2])$. Define $t\in \hat{C}_N^{01}(H,I)$ by $t(h_1,h_2)\coloneqq \beta((h_1,0),(0,h_2))$. In order to finish the proof it suffices to check that
$$
p_1^*(\beta_1) + p_2^*(\beta_2)-\beta=\partial_{v}^{02}(t),
$$
which means that for all $h=(h_1,h_2),l=(l_1,l_2)\in H$, we have
\begin{equation}\label{linealizacion es coborde}
\beta((h_1,0),(l_1,0))+\beta((0,h_2),(0,l_2))-\beta(h,l)=t(l)-t(h+l)+t(h).
\end{equation}
Using the equality
\begin{equation}\label{beta es cociclo}
\beta(l,m) - \beta(h+l,m) + \beta(h,l+m) - \beta(h,l) = \partial_{\mathrm{v}}^{03}(\beta)(h,l,m) = 0 \qquad\text{for $h,l,m\in H$},
\end{equation}
with $h=(h_1,0)$, $l=(0,h_2)$ and $m=(l_1,l_2)$, we obtain
\begin{equation}\label{ciclo 1}
\beta((h_1,h_2),(l_1,l_2))=\beta((h_1,0),(0,h_2)+(l_1,l_2))-\beta((h_1,0),(0,h_2))+\beta((0,h_2),(l_1,l_2)).
\end{equation}
From~\eqref{beta es cociclo} with $h=(h_1,0)$, $l=(l_1,0)$ and $m=(0,h_2+l_2)$, we obtain
\begin{equation}\label{ciclo 2}
\beta((h_1,0),(0,h_2)+(l_1,l_2))=\beta((h_1+l_1,0),(0,h_2+l_2))-\beta((l_1,0),(0,h_2+l_2))+\beta((h_1,0),(l_1,0)),
\end{equation}
whereas from~\eqref{beta es cociclo} with $h=(0,h_2)$, $l=(0,l_2)$ and $m=(l_1,0)$, we obtain
\begin{equation}\label{ciclo 3}
\beta((0,h_2),(l_1,l_2))=\beta((0,h_2+l_2),(l_1,0))-\beta((0,l_2),(l_1,0))+\beta((0,h_2),(0,l_2)).
\end{equation}
Inserting into~\eqref{ciclo 1}, the values of $\beta((h_1,0),(0,h_2)+(l_1,l_2))$ and $\beta((0,h_2),(l_1,l_2))$, obtained in~\eqref{ciclo 2} and~\eqref{ciclo 3}, and using the facts that
$$
t((h_1,h_2)+(l_1,l_2))=\beta((h_1+l_1,0),(0,h_2+l_2)),\quad t((h_1,h_2))=\beta((h_1,0),(0,h_2)),\quad t((l_1,l_2))=\beta((l_1,0),(0,l_2)),
$$
and that $\beta$ is symmetric, we obtain
$$
\beta((h_1,h_2),(l_1,l_2))=t((h_1,h_2)+(l_1,l_2))+\beta((h_1,0),(l_1,0))-t(h_1,h_2)+\beta((0,h_2),(0,l_2))-t(l_1,l_2),
$$
which is~\eqref{linealizacion es coborde}, finishing the proof.
\end{proof}

In Lemma~\ref{cociclos verticales} and in Remarks~\ref{generadores canonicos de los cociclos} and~\ref{los bordes verticales}, we assume that the additive underlying structure of $H$ is $\mathds{Z}_d$.

\begin{lemma}\label{cociclos verticales} For $\beta\in \wh{C}_N^{02}(H,I)$, we have $\beta \in \ker(\partial_{\mathrm{v}}^{03})$ if and only if \begin{equation}\label{eq1}
\beta(i,j) = \sum_{k=j}^{i+j-1} \beta(1,k)- \sum_{k=1}^{i-1} \beta(1,k)\qquad\text{for $1\le i,j< d$.}
\end{equation}
\end{lemma}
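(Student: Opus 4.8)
The plan is to read the condition $\beta\in\ker(\partial_{\mathrm{v}}^{03})$ as the usual normalized $2$-cocycle identity for the additive cyclic group $\mathds{Z}_d$, namely
$$
\beta(l,m)-\beta(h+l,m)+\beta(h,l+m)-\beta(h,l)=0\qquad\text{for all }h,l,m\in H,
$$
and then to exploit that $\mathds{Z}_d$ is generated by $1$, so that the whole cocycle is recoverable from the ``first row'' $k\mapsto\beta(1,k)$. Throughout, all arguments of $\beta$ are read modulo $d$, and the normalizations $\beta(1,0)=0$ and $\beta(0,k)=\beta(k,0)=0$ are invoked whenever an index wraps around.

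For the implication $\beta\in\ker(\partial_{\mathrm{v}}^{03})\Rightarrow\eqref{eq1}$, I would specialize the cocycle identity at $h=1$ to obtain the recursion $\beta(i+1,j)=\beta(i,j)+\beta(1,i+j)-\beta(1,i)$, and telescope it from the base value $\beta(1,j)$ up to $\beta(i,j)$. Reindexing the two resulting sums (the first through $k=p+j$, which absorbs the base term) yields precisely $\sum_{k=j}^{i+j-1}\beta(1,k)-\sum_{k=1}^{i-1}\beta(1,k)$, which is \eqref{eq1}; since the telescoping chains only through indices $2,\dots,i\le d-1$, no wrap-around arises in this direction.

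For the converse I would first note that subtracting \eqref{eq1} for consecutive values of $i$ returns the recursion, i.e.\ $\partial_{\mathrm{v}}^{03}(\beta)(1,i,j)=0$; the single wrap-around instance $i=d-1$, where $\beta(d,j)=\beta(0,j)=0$, must be verified directly from \eqref{eq1} at $i=d-1$, using that the $d-1$ consecutive terms $\beta(1,j),\dots,\beta(1,d+j-2)$ run over all nonzero residues except $j-1$. Together with the instances having a zero entry (true by normalization), this shows \eqref{eq1} is equivalent to $\partial_{\mathrm{v}}^{03}(\beta)(1,l,m)=0$ for all $l,m$. The crux is then the propagation identity
$$
\partial_{\mathrm{v}}^{03}(\beta)(h+1,l,m)=\partial_{\mathrm{v}}^{03}(\beta)(h,l,m),
$$
obtained by expanding the left-hand side and rewriting $\beta(h+1+l,m)$, $\beta(h+1,l+m)$ and $\beta(h+1,l)$ via the $h=1$ recursion; all $\beta(1,\cdot)$-contributions cancel in pairs, leaving the right-hand side. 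As $\partial_{\mathrm{v}}^{03}(\beta)(0,l,m)=0$ holds trivially, induction on $h$ gives $\partial_{\mathrm{v}}^{03}(\beta)\equiv 0$.

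I expect the main obstacle to be this converse, and specifically the propagation identity: it is what upgrades the single generator-level condition encoded by \eqref{eq1} to the cocycle identity for all $h$. The only other delicate point is the bookkeeping of indices modulo $d$ and the use of $\beta(1,0)=0$ in the wrap-around case $i=d-1$; notably, the symmetry of $\beta$ is not needed in either direction.
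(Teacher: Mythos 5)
Your proof is correct, and your forward direction is exactly the paper's: specialize $\partial_{\mathrm{v}}^{03}(\beta)(1,i,j)=0$ to get the recursion $\beta(i+1,j)=\beta(i,j)+\beta(1,i+j)-\beta(1,i)$ and telescope. The converse, however, is organized genuinely differently. The paper's converse extends the closed formula \eqref{eq1} to all $i,j\in\mathds{N}$ (indices read modulo $d$; implicitly this needs the periodicity checks that enlarging $i$ or $j$ by $d$ adds the full-period sum $\sum_{r=1}^{d-1}\beta(1,r)$ to both sums, hence changes nothing, and that the right-hand side vanishes when $i$ or $j$ is a multiple of $d$), and then substitutes this closed form into $\beta(b,c)-\beta(a+b,c)+\beta(a,b+c)-\beta(a,b)$, where all the sums cancel identically. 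You instead stay at the level of the recursion: you upgrade \eqref{eq1} to the generator-level identity $\partial_{\mathrm{v}}^{03}(\beta)(1,l,m)=0$ for \emph{all} $l,m\in\mathds{Z}_d$ --- the wrap-around case $i=d-1$, which you isolate and settle by the ``all residues except $j-1$'' count giving $\beta(d-1,j)=\beta(1,d-1)-\beta(1,j-1)$, is precisely where the paper's tacit extension to $\mathds{N}$ does its work --- and then prove the propagation identity $\partial_{\mathrm{v}}^{03}(\beta)(h+1,l,m)=\partial_{\mathrm{v}}^{03}(\beta)(h,l,m)$, which indeed holds (the six $\beta(1,\cdot)$ contributions cancel in pairs), and induct on $h$ from the trivial base $h=0$. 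Both routes exploit the same fact, that $1$ generates $\mathds{Z}_d$, but they buy different things: the paper's closed-formula substitution is shorter once one grants the extension of \eqref{eq1} to all of $\mathds{N}$, whereas your recursion-plus-propagation argument avoids manipulating double sums, concentrates all modular bookkeeping in a single wrap-around verification, and makes explicit the step the paper dismisses with ``which follows from \eqref{eq1}''. Your observation that symmetry of $\beta$ is never used is also correct and holds for the paper's proof as well.
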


\begin{proof} Assume that $\beta \in \ker(\partial_{\mathrm{v}}^{03})$. Then, for all $i,j$, we have
\begin{equation*}
0 = \partial_{\mathrm{v}}^{03} \beta(1,i,j) = \beta(i,j) - \beta(i+1,j) + \beta(1,i+j) - \beta(1,i).
\end{equation*}
Thus, $\beta(i+1,j) = \beta(i,j) + \beta(1,i+j) - \beta(1,i)$. An inductive argument on $i$ using this fact proves that~\eqref{eq1} is true. Conversely, assume that~\eqref{eq1} holds. We must show that
\begin{equation*}
\beta(b,c) - \beta(a+b,c) + \beta(a,b+c) - \beta(a,b) = 0\qquad\text{for all $0\le a,b,c< d$}.
\end{equation*}
But this is a consequence of the fact that $\beta(i,j) = \sum_{k=j}^{i+j-1} \beta(1,k) - \sum_{k=1}^{i-1} \beta(1,k)$ for all $i,j\in \mathds{N}$ (which follows from~\eqref{eq1}).
\end{proof}

\begin{remark}\label{generadores canonicos de los cociclos} Lemma~\ref{cociclos verticales} implies that each $\beta \in \ker(\partial_{\mathrm{v}}^{03})$ is uniquely determined by $\beta(1,1), \dots,\beta(1,d-1)$. For~ex\-ample, for each $1\le k < d$ and each $\gamma\in I$, the element $\beta_{kd}(\gamma)$, given, for each $0\le i,j<d$, by
\begin{equation}\label{bala1}
\beta_{kd}(\gamma)(i,j) \coloneqq \begin{cases} \phantom{-}\gamma &\text{if $0< i,j\le k<i+j$,}\\ -\gamma &\text{if $i,j>k$ and $i+j-d\le k$,}\\ \phantom{-} 0 &\text{otherwise,}\end{cases}
\end{equation}
is the unique $\beta \in \ker(\partial_{\mathrm{v}}^{03})$ with $\beta(1,k) = \gamma$ and $\beta(1,j)=0$, for $j\ne k$. Note that $\beta_{kd}$'s are linear in $\gamma$ and that
\begin{equation*}
\beta= \sum_{k=1}^{d-1} \beta_{kd}(\beta(1,k))\qquad\text{for each $\beta \in \ker(\partial_{\mathrm{v}}^{03})$.}
\end{equation*}
In particular $\{\beta_{kd}(\gamma): 1\le k<d\text{ and } \gamma\in I\}$ generate $\ker(\partial_{\mathrm{v}}^{03})$.
\end{remark}

For each $1\le k< d$ and $\gamma\in I$, let $\chi_k(\gamma)\in \wh{C}_N^{01}(H,I)$ be the map given by
$$
\chi_k(\gamma)(i)\coloneqq\begin{cases} \gamma &\text{if $i=k$,}\\ 0 & \text{otherwise.}\end{cases}
$$

\begin{remark}\label{los bordes verticales} Note that $\chi_k(\gamma)$'s generate $\wh{C}_N^{01}(H,I)$. Moreover, a direct computation using Remark~\ref{generadores canonicos de los cociclos}, shows that
\begin{equation*}
\partial_{\mathrm{v}}^{02}(\chi_k(\gamma)) = \begin{cases}  2 \beta_{1d}(\gamma) + \beta_{2d}(\gamma) +\cdots + \beta_{d-1,d}(\gamma) & \text{if $k = 1$,}\\  \beta_{kd}(\gamma) - \beta_{k-1,d}(\gamma) & \text{if $k\ne 1$.}
\end{cases}
\end{equation*}
This implies that the $\beta_{d-1,d}(\gamma)$'s generate $\frac{\ker(\partial_{\mathrm{v}}^{03})} {\ima(\partial_{\mathrm{v}}^{02})}$ and $\frac{\ker(\partial_{\mathrm{v}}^{03})} {\ima(\partial_{\mathrm{v}}^{02})}\simeq \frac{I}{dI}$.
\end{remark}

\begin{remark}\label{nota1} From now on we will assume that the additive underlying structure of $H$ is a direct sum $\mathds{Z}_{d_1}\oplus \dots \oplus \mathds{Z}_{d_n}$, of a finite number of finite cyclic groups. By Lemma~\ref{cociclos verticales} and Remarks~\ref{generadores canonicos de los cociclos} and~\ref{los bordes verticales}, the cohomology group $\Harr^2(\mathds{Z}_{d_i},I)$ is generated by the symmetric cocycles $\alpha_i(\gamma)\colon \mathds{Z}_{d_i}\times \mathds{Z}_{d_i}\to I$ (where $\gamma\in I$) given by
$$
\alpha_i(\gamma)(h,l)\coloneqq \beta_{d_i-1,d_i}(\gamma)(h,l) = \begin{cases} \gamma & \text{if $0\le h,l<d_i$ and $h+l\ge d_i$,}\\ 0 & \text{if $0\le h,l<d_i$ and $h+l<d_i$.}\end{cases}
$$
Moreover, there is a $\mathds{Z}$-linear isomorphism $\Harr^2(\mathds{Z}_{d_i},I)\simeq \frac{I}{d_i I}$, which identifies the class of $\alpha_i(\gamma)$ with the class of~$\gamma$. Combining this with Proposition~\ref{Harr^2 es aditivo}, we obtain an isomorphism $\Harr^2(H,I)\simeq \frac{I}{d_1I}\oplus \dots \oplus \frac{I}{d_nI}$.
\end{remark}

\begin{remark}\label{nota2} For $\gamma_1,\dots,\gamma_n\in I$, we define $\alpha_{\gamma_1,\dots,\gamma_n}\colon H\times H\to I$, by
$$
\alpha_{\gamma_1,\dots,\gamma_n}((h_1,\dots,h_n),(l_1,\dots,l_n))\coloneqq \alpha_1(\gamma_1)(h_1,l_1)+\cdots + \alpha_n(\gamma_n)(h_n,l_n).
$$
By Proposition~\ref{Harr^2 es aditivo} and Remark~\ref{nota1}, each $2$-cocycle in the first column of~\eqref{diagrama1} is cohomologous to some $\alpha_{\gamma_1,\dots,\gamma_n}$.
\end{remark}

\begin{corollary}\label{basta tomar combinacion de estandar} Each $2$-cocycle $(\beta,g) \in \wh{C}^{02}_N(H,I)\oplus \wh{C}^{11}_N(H,I)$, of the complex $(\wh{C}_N^*(H,I),\partial+D)$, is cohomologous to a $2$-cocycle of the form $(\alpha,f)$, with $\alpha\coloneqq \alpha_{\gamma_1,\dots,\gamma_n}$, for some $\gamma_1,\dots,\gamma_n\in I$.
\end{corollary}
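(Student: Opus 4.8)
The plan is to leverage the first-column computation directly. The key observation is that a total $1$-coboundary affects the $\wh{C}_N^{02}$-component of a degree-$2$ cochain only through the vertical differential $\partial_{\mathrm{v}}^{02}$, so normalizing $\beta$ up to vertical coboundaries is exactly what the preceding results permit, while the $\wh{C}_N^{11}$-component is simply carried along.

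First I would extract the vertical cocycle condition. Because $(\beta,g)$ sits in total degree $2$, the image $(\partial+D)(\beta,g)$ lands in $\wh{C}_N^{03}\oplus\wh{C}_N^{12}\oplus\wh{C}_N^{21}$, and its $\wh{C}_N^{03}$-component is precisely $\partial_{\mathrm{v}}^{03}(\beta)$, since among the degree-$2$ cochains only $\beta\in\wh{C}_N^{02}$ maps into $\wh{C}_N^{03}$. Hence the hypothesis that $(\beta,g)$ is a cocycle forces $\beta\in\ker(\partial_{\mathrm{v}}^{03})$, i.e.\ $\beta$ is a symmetric vertical $2$-cocycle of the first column.

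Next I would apply the structural description established right before the statement. By Proposition~\ref{Harr^2 es aditivo} together with Remark~\ref{nota1}, every $\beta\in\ker(\partial_{\mathrm{v}}^{03})$ is cohomologous, with respect to $\partial_{\mathrm{v}}^{02}$, to a cocycle of the form $\alpha\coloneqq\alpha_{\gamma_1,\dots,\gamma_n}$ for suitable $\gamma_1,\dots,\gamma_n\in I$; concretely, there is some $t\in\wh{C}_N^{01}(H,I)$ with $\beta-\alpha=\partial_{\mathrm{v}}^{02}(t)$.

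To conclude I would subtract the total coboundary of $t$. For $t\in\wh{C}_N^{01}$ one has $(\partial+D)(t)=\bigl(\partial_{\mathrm{v}}^{02}(t),\,(\partial_{\mathrm{h}}^{11}+D_{01}^{11})(t)\bigr)$, whose first coordinate equals $\beta-\alpha$ by the previous step. Therefore, setting $f\coloneqq g-(\partial_{\mathrm{h}}^{11}+D_{01}^{11})(t)$, the pair $(\alpha,f)=(\beta,g)-(\partial+D)(t)$ is cohomologous to $(\beta,g)$; being a cocycle minus a coboundary, it is again a $2$-cocycle, and it has the desired shape. There is no genuine obstacle here, since we subtract an honest total coboundary so the cocycle property is preserved automatically; the only point worth emphasizing is that the freedom to adjust the $\wh{C}_N^{02}$-component is governed entirely by the Harrison-type cohomology $\Harr^2(H,I)$ computed above.
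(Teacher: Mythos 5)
Your proposal is correct and follows essentially the same route as the paper, which states the corollary as an immediate consequence of Proposition~\ref{Harr^2 es aditivo} and Remark~\ref{nota1} (every vertical $2$-cocycle is $\partial_{\mathrm{v}}^{02}$-cohomologous to some $\alpha_{\gamma_1,\dots,\gamma_n}$). You merely make explicit the two implicit steps — that the total cocycle condition forces $\beta\in\ker(\partial_{\mathrm{v}}^{03})$, and that subtracting the total coboundary $(\partial+D)(t)=\bigl(\partial_{\mathrm{v}}^{02}(t),(\partial_{\mathrm{h}}^{11}+D_{01}^{11})(t)\bigr)$ adjusts only the first component up to the Harrison coboundary while keeping a cocycle — which is exactly what the paper's setup intends.
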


\subsection[The cohomology \texorpdfstring{$\Ho^2_{\blackdiamond,\Yleft}(H,I)$}{H2} in the finite abelian case]{The cohomology \texorpdfstring{$\pmb{\Ho^2_{\blackdiamond,\Yleft}(H,I)}$}{H2} in the finite abelian case}

Recall, from Remark~\ref{nota1}, that, as an additive group, $H=\mathds{Z}_{d_1}\oplus \dots \oplus \mathds{Z}_{d_n}$. We let $e_i\in H$ denote the element, whose only non-zero coordinate is the $i$-th one, which equals~$1$. From now on, we will assume that $(H,\times)$ is abelian. So,
$$
(H,\times)=\langle a_1\rangle \oplus \cdots \oplus \langle a_s\rangle.
$$
We let $r_i$ denote the order of $a_i$. We will find all the $2$-cocycles $(\alpha,-f)$ of $(\wh{C}_N^*(H,I),\partial+D)$, with $\alpha\coloneqq \alpha_{\gamma_1,\dots,\gamma_n}$. So, from now on, we assume that $\alpha \coloneqq \alpha_{\gamma_1,\dots,\gamma_n}$ and $f\in \wh{C}_N^{11}(H,I)$.

\subsubsection[Properties of cocycles]{Properties of cocycles}\label{Properties of cocycles}
The aim of this subsection is to show that each $2$-cocycle $(\alpha,-f)$ of $(\wh{C}_N^*(H,I),\partial+D)$ is completed determined by $(\gamma_k)_{1\le k\le n}$ and $(\mathfrak{f}_{ji})_{1\le j\le s,1\le i\le n}$, where $\mathfrak{f}_{ji}\coloneqq f(a_j,e_i)$. We prove this in Theorem~\ref{ppal1}. There, we also prove that the correspondence
$$
\bigl((\gamma_k)_{1\le k\le n},(\mathfrak{f}_{ji})_{1\le j\le s,1\le i\le n}\bigr) \mapsto (\alpha,-f)
$$
is linear. Of course, not every tuple $\bigl((\gamma_k)_{1\le k\le n},(\mathfrak{f}_{ji})_{1\le j\le s,1\le i\le n}\bigr)$ yields a cocycle.

\smallskip

Let $f\in \wh{C}^{11}_N(H,I)$. The pair $(\alpha,-f)$ is a $2$-cocycle of $(\wh{C}_N^*(H,I),\partial+D)$ if and only if $\partial_{\mathrm{v}}^{03}(\alpha) = 0$, $\partial_{\mathrm{h}}^{12}(\alpha)=\partial_{\mathrm{v}}^{12}(f)$ and $(\partial_{\mathrm{h}}^{21}+D_{11}^{21})(f)=D_{02}^{21}(\alpha)$. We already know that the first condition is satisfied. The second one means that
\begin{equation}\label{beta quasi linear}
f(h,l+m)=f(h,l)+f(h,m)+\alpha(h\cdot l,h\cdot m)-h\blackdiamond\alpha(l,m)\qquad\text{for all $h,l,m\in H$;}
\end{equation}
and the last one means that
\begin{equation}\label{delta horizontal es cero}
f(h+l,m)=f(h\cdot l,h\cdot m)+(h\cdot l)\blackdiamond f(h,m)+(h\cdot l)\blackdiamond f(h,l) \Yleft (h+l)\cdot m - (h+l)\blackdiamond \alpha(h,l)\Yleft (h+l)\cdot m.
\end{equation}
If condition~\eqref{beta quasi linear} is satisfied, then we will say that $f$ is {\em $\alpha$-quasi linear in the second variable.}

\begin{proposition}\label{dependencia de f} If $\partial_{\mathrm{h}}^{12}(\alpha) = \partial_{\mathrm{v}}^{12}(f)$, then, for $h\in H$, $1\le i \le n$ and $0\le \lambda_i< d_i$, we have
\begin{align}
& f(h,\lambda_i e_i)= \lambda_i f(h,e_i)+\sum_{\ell=1}^{\lambda_i-1}\alpha(h\cdot e_i,\ell (h\cdot e_i)),\label{quasilineal 1}\\
& f\left(h,\sum_{i=1}^n \lambda_i e_i\right)= \sum_{i=1}^n f(h,\lambda_i e_i) +\sum_{k=2}^n \alpha\left(\sum_{i=1}^{k-1}\lambda_i (h\cdot e_i),\lambda_k (h\cdot e_k)\right), \label{quasilineal 2} \\
\shortintertext{and}
& d_i f(h,e_i)+\sum_{\ell=1}^{d_i-1}\alpha(h\cdot e_i,\ell (h\cdot e_i))-h\blackdiamond \gamma_i = 0.\label{quasilineal 3}
\end{align}
\end{proposition}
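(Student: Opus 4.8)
The engine of all three identities is the $\alpha$-quasi-linearity relation~\eqref{beta quasi linear}, combined with three elementary facts: the left translation $h\cdot(-)$ is additive by~\eqref{compatibilidades cdot suma y suma cdot}, so $h\cdot(\ell e_i)=\ell(h\cdot e_i)$ and $h\cdot 0=0$; the operation $\blackdiamond$ is additive in its second argument by~\eqref{eqq 2}, so $h\blackdiamond 0=0$; and $\alpha=\alpha_{\gamma_1,\dots,\gamma_n}$ is normalized, symmetric, and, by its definition in Remark~\ref{nota1}, pairs only matching coordinates, in the sense that it vanishes on any pair of elements with disjoint coordinate supports. The plan is to turn~\eqref{beta quasi linear} into a recursion and read off the stated closed forms.

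For~\eqref{quasilineal 1} I would induct on $\lambda_i$, the cases $\lambda_i=0,1$ being immediate from normalization. For the step I write $\lambda_i e_i=e_i+(\lambda_i-1)e_i$ and apply~\eqref{beta quasi linear} with $l=e_i$, $m=(\lambda_i-1)e_i$. After rewriting $h\cdot((\lambda_i-1)e_i)=(\lambda_i-1)(h\cdot e_i)$, the surviving correction term is exactly the new summand $\alpha(h\cdot e_i,(\lambda_i-1)(h\cdot e_i))$, while $h\blackdiamond\alpha(e_i,(\lambda_i-1)e_i)$ vanishes because $\alpha_i(\gamma_i)(1,\lambda_i-1)=0$ whenever $\lambda_i<d_i$; feeding in the induction hypothesis yields~\eqref{quasilineal 1}. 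Identity~\eqref{quasilineal 3} is the \emph{wrap-around} of this same recursion: since $d_ie_i=0$ in $H$, the left-hand side $f(h,d_ie_i)=f(h,0)=0$. Applying~\eqref{beta quasi linear} with $l=(d_i-1)e_i$, $m=e_i$ and substituting~\eqref{quasilineal 1} at $\lambda_i=d_i-1$, I obtain $0=d_if(h,e_i)+\sum_{\ell=1}^{d_i-1}\alpha(h\cdot e_i,\ell(h\cdot e_i))-h\blackdiamond\alpha((d_i-1)e_i,e_i)$; now the cross term does \emph{not} vanish but equals $\gamma_i$, since $\alpha_i(\gamma_i)(d_i-1,1)=\gamma_i$ (here $(d_i-1)+1=d_i\ge d_i$), which is precisely~\eqref{quasilineal 3}.

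For~\eqref{quasilineal 2} I would induct on the number of active coordinates, peeling off $\lambda_ke_k$ from $\sum_{i=1}^{k}\lambda_ie_i$. Applying~\eqref{beta quasi linear} with $l=\sum_{i=1}^{k-1}\lambda_ie_i$ and $m=\lambda_ke_k$ gives $f\bigl(h,\sum_{i\le k}\lambda_ie_i\bigr)=f\bigl(h,\sum_{i<k}\lambda_ie_i\bigr)+f(h,\lambda_ke_k)+\alpha\bigl(\sum_{i<k}\lambda_i(h\cdot e_i),\lambda_k(h\cdot e_k)\bigr)-h\blackdiamond\alpha\bigl(\sum_{i<k}\lambda_ie_i,\lambda_ke_k\bigr)$. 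Here the last term drops out, because $\sum_{i<k}\lambda_ie_i$ is supported away from the $k$-th coordinate while $\lambda_ke_k$ is supported only there, so $\alpha$ of this disjoint-support pair is $0$ and $\blackdiamond$ sends it to $0$. Telescoping from $k=2$ to $n$ then produces~\eqref{quasilineal 2}.

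None of the steps is deep; the only point requiring genuine care — and the place where a sign or an index error could slip in — is the bookkeeping of the $\alpha$-correction terms, i.e.\ deciding in each application of~\eqref{beta quasi linear} whether the relevant value of $\alpha_{\gamma_1,\dots,\gamma_n}$ is $0$ or $\gamma_i$. This is governed entirely by the explicit piecewise formula for $\alpha_i(\gamma_i)$ and its support on matching coordinates, so I would keep the definition from Remark~\ref{nota1} at hand throughout.
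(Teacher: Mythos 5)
Your proposal is correct and follows essentially the same route as the paper's proof: induction via the quasi-linearity relation~\eqref{beta quasi linear} with $l=e_i$, $m=(\lambda_i-1)e_i$ for~\eqref{quasilineal 1}, induction on partial sums $\sum_{i\le k}\lambda_i e_i$ for~\eqref{quasilineal 2}, and the wrap-around $f(h,d_ie_i)=f(h,0)=0$ combined with~\eqref{quasilineal 1} and $\alpha(e_i,(d_i-1)e_i)=\gamma_i$ for~\eqref{quasilineal 3}. Your careful tracking of when the $\alpha$-correction terms vanish (disjoint coordinate support, or $\lambda_i<d_i$) versus when they contribute $\gamma_i$ is exactly the bookkeeping the paper's proof relies on.
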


\begin{proof} The equality~\eqref{quasilineal 1} follows by induction on $\lambda_i$, using formula~\eqref{beta quasi linear}, with $l=e_i$ and $m=(\lambda_i-1)e_i$, and the fact that $\alpha(l,m) = 0$. To prove~\eqref{quasilineal 2}, it suffices to show that, for $k \le n$,
$$
f\left(h,\sum_{i=1}^k \lambda_i e_i\right)= \sum_{i=1}^k f(h,\lambda_i e_i)+\sum_{\ell=2}^k \alpha\left(\sum_{i=1}^{\ell-1}\lambda_i (h\cdot e_i), \lambda_{\ell} (h\cdot e_{\ell})\right).
$$
This follows by induction on $k$, using~\eqref{beta quasi linear}, with $l=\sum_{i=1}^{k-1}\lambda_i e_i$ and $m=\lambda_{k} e_{k}$, and the fact that $\alpha(l,m) = 0$. Finally, condition~\eqref{quasilineal 3} follows by applying~\eqref{beta quasi linear}, with $l=e_i$ and $m=(d_i-1) e_i$, and then using condition~\eqref{quasilineal 1} and the fact that $\alpha(l,m)=\gamma_i$.
\end{proof}


\begin{corollary}\label{coro de dependencia de f} For each $h,l\in H$ the value $f(h,l)$ is determined by the $\gamma_i$'s and the $f(h,e_i)$'s. More\-over, this~cor\-re\-spondence is linear in $(\gamma_1,\dots,\gamma_n,f(h,e_1),\dots,f(h,e_n))$.
\end{corollary}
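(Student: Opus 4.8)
The plan is to read the statement off directly from Proposition~\ref{dependencia de f}. Fix $h\in H$ and write an arbitrary element in coordinates as $l=\sum_{i=1}^n\lambda_i e_i$ with $0\le \lambda_i< d_i$, which is possible since, as an additive group, $H=\mathds{Z}_{d_1}\oplus\dots\oplus\mathds{Z}_{d_n}$. First I would apply~\eqref{quasilineal 2} to express $f(h,l)$ as $\sum_{i=1}^n f(h,\lambda_i e_i)$ plus a sum of values of $\alpha$. Then~\eqref{quasilineal 1} rewrites each $f(h,\lambda_i e_i)$ as $\lambda_i f(h,e_i)$ plus further values of $\alpha$. Substituting the latter into the former exhibits $f(h,l)$ as an explicit combination of the $f(h,e_i)$'s and of values of $\alpha$; note that~\eqref{quasilineal 3} plays no role here, being a constraint rather than a reduction formula.

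It remains to trace how the values of $\alpha$ enter. Since $\alpha=\alpha_{\gamma_1,\dots,\gamma_n}$, every value $\alpha(x,y)$ equals $\sum_{i=1}^n\alpha_i(\gamma_i)(x_i,y_i)$, and by Remark~\ref{nota1} each $\alpha_i(\gamma_i)$ is $\mathds{Z}$-linear in $\gamma_i$. Hence each value of $\alpha$ occurring in the expression for $f(h,l)$ is a $\mathds{Z}$-linear function of $(\gamma_1,\dots,\gamma_n)$. This shows that $f(h,l)$ is completely determined by the tuple $(\gamma_1,\dots,\gamma_n,f(h,e_1),\dots,f(h,e_n))$, which is the first assertion.

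For the linearity statement I would simply observe that in the resulting expression every summand is either an integer multiple of some $f(h,e_i)$ or a value of $\alpha$, the latter being linear in the $\gamma_i$'s; in particular, setting $\gamma_1=\dots=\gamma_n=0$ and $f(h,e_1)=\dots=f(h,e_n)=0$ makes $\alpha=0$ and forces $f(h,l)=0$. Thus the correspondence is homogeneous $\mathds{Z}$-linear, not merely affine, with coefficients that depend on the fixed pair $(h,l)$ but not on the unknown data. I do not expect any real obstacle: this is a bookkeeping consequence of the two reduction formulas of Proposition~\ref{dependencia de f}, and the only point requiring attention is the linearity of $\alpha$ in the $\gamma_i$'s, which is already built into the definition of the $\alpha_i$'s.
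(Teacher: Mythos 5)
Your proposal is correct and is essentially the argument the paper intends: the corollary is stated without proof precisely because it follows by combining~\eqref{quasilineal 2} and~\eqref{quasilineal 1} to reduce $f(h,l)$ to the $f(h,e_i)$'s plus values of $\alpha$, which are $\mathds{Z}$-linear in the $\gamma_i$'s by construction of $\alpha_{\gamma_1,\dots,\gamma_n}$. Your observations that~\eqref{quasilineal 3} is only a constraint and that the resulting dependence is homogeneous linear (not merely affine) are both accurate and consistent with how the paper uses this corollary later.
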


\begin{lemma}\label{para b ele} Let $h,m\in H$ and set $\bar{h}_{\ell}\coloneqq h^{\times(\ell+1)}-h$, for $\ell\in \mathds{N}_0$. The following equalities hold:
\begin{equation}\label{primera para b ele}
h\cdot \bar{h}_{\ell}=h^{\times \ell}\quad\text{and}\quad h^{\times(\ell+1)}\cdot m=(h+\bar{h}_{\ell})\cdot m=h^{\times \ell}\cdot (h\cdot m).
\end{equation}
\end{lemma}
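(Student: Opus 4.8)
The plan is to read both identities directly off the dictionary between the linear cycle set structure and the associated brace structure recorded in~\eqref{equivalencia linear cycle set braza}, with no need for induction. The only ingredients required are the translation formula $h\cdot l = h^{\times-1}\times(h+l)$ and the second distributivity law $(h+l)\cdot m = (h\cdot l)\cdot(h\cdot m)$ from~\eqref{compatibilidades cdot suma y suma cdot}.

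First I would establish the identity $h\cdot\bar h_\ell = h^{\times\ell}$. By definition $\bar h_\ell = h^{\times(\ell+1)}-h$, so that $h+\bar h_\ell = h^{\times(\ell+1)}$. Applying the translation formula with $l=\bar h_\ell$ then gives $h\cdot\bar h_\ell = h^{\times-1}\times(h+\bar h_\ell) = h^{\times-1}\times h^{\times(\ell+1)}$, and since both factors are powers of the single element $h$ in the group $(H,\times)$, their exponents add to produce $h^{\times\ell}$.

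For the second chain of equalities, the first one, $h^{\times(\ell+1)}\cdot m = (h+\bar h_\ell)\cdot m$, is nothing more than the relation $h+\bar h_\ell = h^{\times(\ell+1)}$ noted above. For the remaining equality I would invoke the second relation in~\eqref{compatibilidades cdot suma y suma cdot} with $l=\bar h_\ell$, yielding $(h+\bar h_\ell)\cdot m = (h\cdot\bar h_\ell)\cdot(h\cdot m)$, and then substitute the first identity $h\cdot\bar h_\ell = h^{\times\ell}$ to reach $h^{\times\ell}\cdot(h\cdot m)$. There is essentially no obstacle here: the argument is a short unwinding of definitions, and the only point needing any care is the collapsing of powers of $h$ in the multiplicative group, which is valid for iterated products of a single element irrespective of whether $\times$ is commutative.
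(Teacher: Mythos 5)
Your proof is correct and follows exactly the same route as the paper's: the first identity via the brace--cycle-set dictionary $h\cdot l = h^{\times-1}\times(h+l)$ applied to $l=\bar h_\ell$, and the second via the law $(h+l)\cdot m=(h\cdot l)\cdot(h\cdot m)$ from~\eqref{compatibilidades cdot suma y suma cdot} together with the first identity. Nothing to add.
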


\begin{proof} By~\eqref{equivalencia linear cycle set braza}, we have
$$
h\cdot\bar{h}_{\ell}=h^{\times-1}\times (h+\bar{h}_{\ell})=h^{\times-1}\times h^{\times(\ell+1)}=h^{\times \ell}.
$$
Hence, by~\eqref{compatibilidades cdot suma y suma cdot},
$$
h^{\times(\ell+1)}\cdot m= (h+\bar{h}_{\ell})\cdot m=(h\cdot \bar{h}_{\ell})\cdot (h\cdot m)=h^{\times \ell}\cdot (h\cdot m),
$$
which finishes the proof.
\end{proof}

\begin{proposition}\label{delta nos da times} Assume that $(\alpha,-f)\in \wh{C}^{02}_N(H,I)\oplus \wh{C}^{11}_N(H,I)$ is a $2$-cocycle of $(\wh{C}_N^*(H,I),\partial+D)$. Take $h,m\in H$ and set $\bar{h}_{\ell}$ as in the previous lem\-ma. For all $k\in \mathds{N}_0$, we have
\begin{equation}\label{f de times}
f(h^{\times k},m)=\sum_{\ell=0}^{k-1} h^{\times (k-\ell-1)} \blackdiamond f(h,h^{\times \ell}\cdot m)+\sum_{\ell=1}^{k-1} h^{\times \ell} \blackdiamond f(h,\bar{h}_{\ell})\Yleft h^{\times k}\cdot m -\sum_{\ell=1}^{k-1} h^{\times (\ell+1)} \blackdiamond \alpha(h,\bar{h}_{\ell})\Yleft h^{\times k}\cdot m.
\end{equation}
\end{proposition}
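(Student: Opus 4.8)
The plan is to prove~\eqref{f de times} by induction on $k$. The base case $k=0$ is immediate: since $h^{\times 0}=0$, the left-hand side is $f(0,m)=0$ by the normalization convention, while every sum on the right is empty. The case $k=1$ likewise collapses to the single $\ell=0$ term $0\blackdiamond f(h,m)=f(h,m)$, using $0\blackdiamond y=y$ from~\eqref{eqq 2} and $h^{\times 0}\cdot m=0\cdot m=m$; this will later confirm that the $\ell=0$ summand is produced correctly.

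For the inductive step the key idea is to write $h^{\times k}=h+\bar h_{k-1}$ and to peel off a single factor of $h$ by applying the cocycle identity~\eqref{delta horizontal es cero} with $l=\bar h_{k-1}$. Since $h\cdot\bar h_{k-1}=h^{\times(k-1)}$ and $h+\bar h_{k-1}=h^{\times k}$ by~\eqref{primera para b ele}, this will give
\begin{align*}
f(h^{\times k},m)={}&f(h^{\times(k-1)},h\cdot m)+h^{\times(k-1)}\blackdiamond f(h,m)\\
&+h^{\times(k-1)}\blackdiamond f(h,\bar h_{k-1})\Yleft h^{\times k}\cdot m-h^{\times k}\blackdiamond\alpha(h,\bar h_{k-1})\Yleft h^{\times k}\cdot m,
\end{align*}
where I also use $(h+\bar h_{k-1})\cdot m=h^{\times k}\cdot m$ to rewrite the two trailing $\Yleft$-arguments.

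Next I would feed the inductive hypothesis into $f(h^{\times(k-1)},h\cdot m)$, that is, formula~\eqref{f de times} with $k-1$ in place of $k$ and $h\cdot m$ in place of $m$. The essential simplification comes from the second identity in~\eqref{primera para b ele}, namely $h^{\times\ell}\cdot(h\cdot m)=h^{\times(\ell+1)}\cdot m$, which I apply both inside the arguments $f(h,h^{\times\ell}\cdot(h\cdot m))$ and in the $\Yleft$-arguments $h^{\times(k-1)}\cdot(h\cdot m)=h^{\times k}\cdot m$. I would then reindex the first resulting sum via $\ell\mapsto\ell+1$, converting $\sum_{\ell=0}^{k-2}h^{\times(k-2-\ell)}\blackdiamond f(h,h^{\times(\ell+1)}\cdot m)$ into $\sum_{\ell=1}^{k-1}h^{\times(k-1-\ell)}\blackdiamond f(h,h^{\times\ell}\cdot m)$, so that all three sums now run with the same shape as in the target, but each missing one endpoint.

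Finally I would recombine the boundary terms. The term $h^{\times(k-1)}\blackdiamond f(h,m)$ is precisely the $\ell=0$ summand of the first sum (again via $h^{\times 0}\cdot m=m$); the term $h^{\times(k-1)}\blackdiamond f(h,\bar h_{k-1})\Yleft h^{\times k}\cdot m$ supplies the missing $\ell=k-1$ summand of the second sum; and $-h^{\times k}\blackdiamond\alpha(h,\bar h_{k-1})\Yleft h^{\times k}\cdot m$ supplies the missing $\ell=k-1$ summand of the third sum, using $h^{\times k}=h^{\times((k-1)+1)}$. Assembling these yields exactly~\eqref{f de times} for $k$. The main obstacle is purely bookkeeping: one must respect the $\blackdiamond$/$\cdot$/$\Yleft$ precedence (so that $a\blackdiamond b\Yleft c\cdot m$ always means $(a\blackdiamond b)\Yleft(c\cdot m)$) and verify that, after the index shift, the three extra terms produced by~\eqref{delta horizontal es cero} slot exactly into the three missing endpoints of the three sums.
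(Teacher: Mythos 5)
Your proof is correct and follows essentially the same route as the paper's: induction on $k$ anchored at $f(0,m)=0$, peeling off one factor of $h$ via Lemma~\ref{para b ele} and condition~\eqref{delta horizontal es cero} with $l=\bar h_{k-1}$, then reindexing so the three extra terms fill the missing endpoints of the three sums. The only difference is cosmetic (you step from $k-1$ to $k$ while the paper steps from $k$ to $k+1$), so nothing further is needed.
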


\begin{proof} Recall that the equality $(\partial_{\mathrm{h}}^{21}+D_{11}^{21})(f)=D_{02}^{21}(\alpha)$ yields condition~\eqref{delta horizontal es cero}. For $k=0$, the equality~\eqref{f de times} holds sin\-ce $f(0,m) = 0$. Assume this is true for some $k\ge 0$. By Lemma~\ref{para b ele}, we know that
$$
h\cdot \bar{h}_k=h^{\times k}\qquad\text{and}\qquad h^{\times(k+1)}\cdot m=h^{\times k}\cdot (h\cdot m).
$$
Since, also $h+\bar{h}_k=h^{\times(k+1)}$, from condition~\eqref{delta horizontal es cero} with $(h,l,m)=(h,\bar{h}_k,m)$, we obtain
\begin{align*}
f(h^{\times (k+1)},m) &= f(h+\bar{h}_k,m)\\
%
%
&= f(h^{\times k},h\cdot m)+ h^{\times k}\blackdiamond f(h,m)+  h^{\times k}\blackdiamond f(h,\bar{h}_k)\Yleft h^{\times(k+1)}\cdot m -h^{\times(k+1)}\blackdiamond \alpha(h,\bar{h}_k)\Yleft h^{\times(k+1)}\cdot m\\
%
%
&= \sum_{\ell=1}^k h^{\times (k-\ell)}\blackdiamond f(h,h^{\times \ell}\!\cdot m)+\sum_{\ell=1}^{k-1} h^{\times \ell}\blackdiamond f(h,\bar{h}_{\ell})\Yleft h^{\times (k+1)}\!\cdot m -\sum_{\ell=2}^k h^{\times \ell}\blackdiamond \alpha(h,\bar{h}_{\ell-1})\Yleft h^{\times (k+1)}\!\cdot m\\
& + h^{\times k}\blackdiamond f(h,m)+ h^{\times k}\blackdiamond f(h,\bar{h}_k)\Yleft h^{\times(k+1)}\cdot m- h^{\times (k+1)} \blackdiamond \alpha(h,\bar{h}_k)\Yleft h^{\times(k+1)}\cdot m\\
%
%
& = \sum_{\ell=0}^k h^{\times (k-\ell)}\blackdiamond f(h,h^{\times \ell}\cdot m)+\sum_{\ell=1}^k h^{\times \ell}\blackdiamond f(h,\bar{h}_{\ell}) \Yleft h^{\times (k+1)}\cdot m - \sum_{\ell=1}^k h^{\times (\ell+1)}\blackdiamond \alpha(h,\bar{h}_{\ell})\Yleft h^{\times (k+1)}\cdot m,
\end{align*}
as desired.
\end{proof}

\begin{corollary}\label{coro de delta nos da times} For each $h,l\in H$ and $k\in \mathds{N}_0$, the value $f(h^{\times k},l)$ is determined by the $\gamma_i$'s and the $f(h,e_i)$'s.~More\-over, this~cor\-re\-spondence is linear in $(\gamma_1,\dots,\gamma_n,f(h,e_1),\dots,f(h,e_n))$.
\end{corollary}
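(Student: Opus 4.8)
The plan is to read the claim off directly from the explicit formula~\eqref{f de times} of Proposition~\ref{delta nos da times}, reducing each of its terms to the basic data $(\gamma_1,\dots,\gamma_n,f(h,e_1),\dots,f(h,e_n))$ by means of Corollary~\ref{coro de dependencia de f}.

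First I would apply Proposition~\ref{delta nos da times}, which writes $f(h^{\times k},l)$ via~\eqref{f de times} as a finite combination of terms of the three shapes $h^{\times a}\blackdiamond f(h,h^{\times \ell}\cdot l)$, $h^{\times a}\blackdiamond f(h,\bar h_\ell)\Yleft h^{\times k}\cdot l$ and $h^{\times a}\blackdiamond \alpha(h,\bar h_\ell)\Yleft h^{\times k}\cdot l$. The crucial observation is that every occurrence of $f$ on the right-hand side has first argument equal to $h$; only the second slot carries the varying arguments $h^{\times \ell}\cdot l$ and $\bar h_\ell$. Hence Corollary~\ref{coro de dependencia de f} applies verbatim to each of these terms and expresses $f(h,h^{\times \ell}\cdot l)$ and $f(h,\bar h_\ell)$ as fixed linear functions of $(\gamma_1,\dots,\gamma_n,f(h,e_1),\dots,f(h,e_n))$. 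The remaining terms $\alpha(h,\bar h_\ell)=\alpha_{\gamma_1,\dots,\gamma_n}(h,\bar h_\ell)$ are, by the very definition of $\alpha_{\gamma_1,\dots,\gamma_n}$, linear in the $\gamma_i$'s.

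Next I would substitute these expressions into~\eqref{f de times} and propagate linearity through the outer operations. Since $\blackdiamond$ is additive in its second argument by~\eqref{eqq 2} and $\Yleft$ is bilinear, hence additive in its first argument, every summand is a linear function of the quantities $f(h,h^{\times \ell}\cdot l)$, $f(h,\bar h_\ell)$ and $\alpha(h,\bar h_\ell)$, and therefore a linear function of the basic data. As a finite sum of linear functions is again linear, the value $f(h^{\times k},l)$ is determined by, and depends linearly on, $(\gamma_1,\dots,\gamma_n,f(h,e_1),\dots,f(h,e_n))$, which is exactly the assertion.

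I do not expect a genuine obstacle here: the statement is a formal consequence of Proposition~\ref{delta nos da times} and Corollary~\ref{coro de dependencia de f} together with the additivity of $\blackdiamond$ and $\Yleft$. The one point deserving a moment's attention is checking that no term of~\eqref{f de times} involves $f$ evaluated at a first argument different from $h$; this is what allows Corollary~\ref{coro de dependencia de f}, which holds for a fixed first argument, to be invoked uniformly, and it is immediate from the shape of the formula.
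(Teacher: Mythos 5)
Your proposal is correct and is essentially the paper's own proof: the paper likewise deduces the corollary from formula~\eqref{f de times} together with Corollary~\ref{coro de dependencia de f}, using the left linearity of $\Yleft$ (and the second-argument linearity of $\blackdiamond$) to propagate the linear dependence, exactly as you spell out. Your observation that every $f$ on the right-hand side of~\eqref{f de times} has first argument $h$ is precisely what makes the one-line citation in the paper work.
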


\begin{proof} This follows easily from the formula~\eqref{f de times} and Corollary~\ref{coro de dependencia de f}, using that $\Yleft$ is left linear.
\end{proof}

\begin{theorem}\label{ppal1} Let $a_1,\dots,a_s$ be as at the beginning of this section. If $(\alpha,-f)$ is a $2$-cocycle of $(\wh{C}_N^*(H,I),\partial+D)$, then, for each $h,l\in H$, the value $f(h,l)$ is determined by the $\gamma_i$'s and the $f(a_j,e_i)$'s. More\-over, this correspondence is linear in $(\gamma_1,\dots,\gamma_n,f(a_1,e_1),\dots,f(a_1,e_n),\dots, f(a_s,e_1),\dots, f(a_s,e_n))$. Furthermore, if we set $\bar{a}_{j\ell}\coloneqq a_j^{\times(\ell+1)}-a_j$, then, for each $m\in H$ and $j=1,\dots,s$, we have
\begin{equation}\label{condicion en times}
\sum_{\ell=0}^{r_j-1} a_j^{\times (r_j-1-\ell)}\blackdiamond f(a_j,a_j^{\times \ell}\cdot m)+\sum_{\ell=1}^{r_j-1} a_j^{\times \ell}\blackdiamond f(a_j,\bar{a}_{j\ell}) \Yleft m -\sum_{\ell=1}^{r_j-1} a_j^{\times (\ell+1)}\blackdiamond \alpha(a_j,\bar{a}_{j\ell})\Yleft m = 0.
\end{equation}
\end{theorem}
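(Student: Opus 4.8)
The plan is to exploit the hypothesis that $(H,\times)=\langle a_1\rangle\oplus\cdots\oplus\langle a_s\rangle$ is abelian, so that every $h\in H$ has a unique normal form $h=a_1^{\times k_1}\times\cdots\times a_s^{\times k_s}$ with $0\le k_j<r_j$, together with a \emph{splitting formula} for $f(h\times g,m)$ extracted from the cocycle condition~\eqref{delta horizontal es cero}. First I would specialize~\eqref{delta horizontal es cero} by substituting $l={}^{h}g$: since $h+{}^{h}g=h\times g$ by~\eqref{equivalencia linear cycle set braza}, since $h\cdot{}^{h}g=g$ because $l\mapsto{}^{h}l$ inverts $l\mapsto h\cdot l$, and since $(h\times g)\cdot m=g\cdot(h\cdot m)$ by~\eqref{accion tines con punto}, this yields
\begin{equation*}
f(h\times g,m)=f(g,h\cdot m)+g\blackdiamond f(h,m)+g\blackdiamond f(h,{}^{h}g)\Yleft(h\times g)\cdot m-(h\times g)\blackdiamond\alpha(h,{}^{h}g)\Yleft(h\times g)\cdot m.
\end{equation*}
The right-hand side involves only values $f(g,\,\cdot\,)$, values $f(h,\,\cdot\,)$, and values of $\alpha$, and each of these enters linearly.

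Next I would prove the determination statement by induction on the total exponent $k_1+\cdots+k_s$ of $h$. The base case $h=0$ is immediate, since $f(0,m)=0$. For the inductive step, choose a generator $a_j$ with $k_j\ge 1$ and write $h=h_1\times a_j$, where $h_1$ is again in normal form and has total exponent one smaller; applying the splitting formula with $(h,g)=(h_1,a_j)$ expresses $f(h,m)$ through $f(a_j,h_1\cdot m)$, $f(h_1,m)$, $f(h_1,{}^{h_1}a_j)$ and $\alpha$. By Corollary~\ref{coro de dependencia de f}, the first term, being a value $f(a_j,\,\cdot\,)$, is determined by $\gamma_1,\dots,\gamma_n$ and the $f(a_j,e_i)$'s; by the inductive hypothesis the two $f(h_1,\,\cdot\,)$-terms are determined by the $\gamma_i$'s and the $f(a_{j'},e_i)$'s; and $\alpha=\alpha_{\gamma_1,\dots,\gamma_n}$ depends only on the $\gamma_i$'s. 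Since $\blackdiamond$ and $\Yleft$ are linear and every reduction step is linear in its input data, the resulting correspondence is linear in the tuple $(\gamma_1,\dots,\gamma_n,f(a_1,e_1),\dots,f(a_s,e_n))$, as claimed.

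Finally, the consistency condition~\eqref{condicion en times} I would obtain by a direct substitution into formula~\eqref{f de times} of Proposition~\ref{delta nos da times}. Taking $h=a_j$ and $k=r_j$ there, and using that $a_j^{\times r_j}=0$, so that $f(a_j^{\times r_j},m)=f(0,m)=0$ and $a_j^{\times r_j}\cdot m=0\cdot m=m$, collapses the three sums in~\eqref{f de times} precisely to the left-hand side of~\eqref{condicion en times}, which must therefore vanish.

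I expect the main obstacle to be the bookkeeping in the inductive step rather than any conceptual difficulty: one must check that the splitting formula only ever reintroduces first arguments that are either a fixed generator $a_j$ (handled by Corollary~\ref{coro de dependencia de f}) or an element $h_1$ of strictly smaller total exponent (handled by the inductive hypothesis), and that the various $\blackdiamond$- and $\Yleft$-twists are carried along without disturbing linearity. The well-foundedness of the induction is guaranteed by the finiteness of $(H,\times)$ and the normal form supplied by the decomposition $(H,\times)=\langle a_1\rangle\oplus\cdots\oplus\langle a_s\rangle$.
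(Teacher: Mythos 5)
Your proposal is correct and takes essentially the same approach as the paper: the splitting formula you extract from \eqref{delta horizontal es cero} by substituting $l={}^{h}g$ is precisely the identity of Proposition~\ref{equivalencia cociclo con times}, which the paper also uses inline (with $\ov{l}_{\ell}={}^{\ov{h}_{\ell}}\bigl(a_{\ell}^{\times k_{\ell}}\bigr)$) to induct along the normal form of $h$, and your derivation of \eqref{condicion en times} from \eqref{f de times} at $h=a_j$, $k=r_j$ is identical to the paper's. The only difference is bookkeeping: you peel off one generator per step, so that Corollary~\ref{coro de dependencia de f} alone handles the reduced term, whereas the paper adjoins a full power $a_{\ell}^{\times k_{\ell}}$ per step and handles it with Corollary~\ref{coro de delta nos da times}.
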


\begin{proof} Take $h,l\in H$. Assume that $h = a_j^{\times k}$, with $1\le j\le s$ and $1\le k\le r_j$. By Corollary~\ref{coro de delta nos da times} with $h=a_j$, the value $f(h,l)$ is linearly determined by $(\gamma_1,\dots,\gamma_n,f(a_j,e_1),\dots,f(a_j,e_n))$. Assume now that $h$ is arbitrary and write
$$
h=a_1^{\times k_1}\times a_2^{\times k_2}\times\dots \times a_s^{\times k_m}\quad\text{with $1\le k_j\le r_j$.}
$$
For $1<\ell<s$, set $\ov{h}_{\ell}\coloneqq a_1^{\times k_1}\times \dots \times a_{\ell-1}^{\times k_{\ell-1}}$ and $\ov{l}_{\ell}\coloneqq \ov{h}_{\ell}\times a_{\ell}^{\times k_{\ell}}-\ov{h}_{\ell}$. Note that
$$
\ov{h}_{\ell}+\ov{l}_{\ell}=a_1^{\times k_1}\times \dots \times a_{\ell}^{\times k_{\ell}} = \ov{h}_{\ell+1} \quad\text{and}\quad \ov{h}_{\ell}\cdot \ov{l}_{\ell}=\ov{h}_{\ell}^{\times-1}\times (\ov{h}_{\ell}+\ov{l}_{\ell})=\ov{h}_{\ell}^{\times-1}\times \ov{h}_{\ell}\times a_{\ell}^{\times k_{\ell}}= a_{\ell}^{\times k_{\ell}}.
$$
From~\eqref{delta horizontal es cero} with $\beta=\alpha$, $h=\ov{h}_{\ell}$ and $l=\ov{l}_{\ell}$, we obtain the equality
\begin{align*}
f(\ov{h}_{\ell+1},m)&=f(\ov{h}_{\ell}+\ov{l}_{\ell},m)\\
&=f(a_{\ell}^{\times k_{\ell}},\ov{h}_{\ell}\cdot m)+(\ov{h}_{\ell}\cdot \ov{l}_{\ell})\blackdiamond f(\ov{h}_{\ell},m) + (\ov{h}_{\ell}\cdot \ov{l}_{\ell})\blackdiamond f(\ov{h}_{\ell},\ov{l}_{\ell}) \Yleft \ov{h}_{\ell+1}\cdot m - \ov{h}_{\ell+1}\blackdiamond \alpha(\ov{h}_{\ell},\ov{l}_{\ell}) \Yleft \ov{h}_{\ell+1}\cdot m,
\end{align*}
which shows by induction on $\ell$, that $f(h,l)$ depends linearly on
$$
(\gamma_1,\dots,\gamma_n,f(a_1,e_1),\dots,f(a_1,e_n),\dots, f(a_s,e_1),\dots, f(a_s,e_n)),
$$
as desired. Finally, since $a_j^{\times r_j}\cdot m = 0\cdot m = m$, the formula~\eqref{f de times}, with $\beta=\alpha$, $h=a_j$ and $k=r_j$, yields~\eqref{condicion en times}.
\end{proof}

\begin{proposition}\label{equivalencia cociclo con times} Let $f\colon H\times H\to I$ be a map. The equality
$$
f(h+d,m)=f(h\cdot d,h\cdot m)+(h\cdot d)\blackdiamond f(h,m)+(h\cdot d)\blackdiamond f(h,d) \Yleft (h+d)\cdot m -(h+d)\blackdiamond \alpha(h,d)\Yleft (h+d)\cdot m,
$$
holds, for all $h,d,m\in H$, if and only if
$$
f(h\times l,m)=f(l,h\cdot m)+ l\blackdiamond f(h,m)+ l\blackdiamond f(h,{}^hl)\Yleft (h\times l)\cdot m - (h\times l)\blackdiamond \alpha(h,{}^hl)\Yleft (h\times l)\cdot m,
$$
for all $h,l,m\in H$.
\end{proposition}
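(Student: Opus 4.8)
The plan is to prove the equivalence by a single change of variables, exploiting that the two displayed identities have \emph{identical} right-hand sides once one passes between the additive argument $d$ and the multiplicative argument $l$ through the relation $l = h\cdot d$, i.e.\ $d = {}^h l$. For fixed $h$ the left translation $d\mapsto h\cdot d$ is a bijection of $H$ whose inverse is $l\mapsto {}^h l$, so quantifying over all $d\in H$ is the same as quantifying over all $l\in H$. The only auxiliary facts needed are the bookkeeping identities
\begin{equation*}
h + {}^h l = h\times l \quad\text{and}\quad h\cdot({}^h l) = l,
\end{equation*}
both of which come directly from~\eqref{equivalencia linear cycle set braza} and the defining property that $l\mapsto {}^h l$ inverts $l\mapsto h\cdot l$.

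For the forward implication I would assume the first identity for all $h,d,m$ and fix arbitrary $h,l,m\in H$. Setting $d\coloneqq {}^h l$, the identities above give $h+d = h\times l$, $h\cdot d = l$, and hence $(h+d)\cdot m = (h\times l)\cdot m$. Substituting these into the first identity transports each term into the corresponding term of the second: $f(h+d,m)\mapsto f(h\times l,m)$, $f(h\cdot d,h\cdot m)\mapsto f(l,h\cdot m)$, $(h\cdot d)\blackdiamond f(h,m)\mapsto l\blackdiamond f(h,m)$, $(h\cdot d)\blackdiamond f(h,d)\Yleft (h+d)\cdot m\mapsto l\blackdiamond f(h,{}^h l)\Yleft (h\times l)\cdot m$, and likewise for the final $\alpha$-term. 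This yields exactly the second identity.

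For the converse I would assume the second identity for all $h,l,m$ and fix arbitrary $h,d,m\in H$. Now I set $l\coloneqq h\cdot d$; then ${}^h l = {}^h(h\cdot d) = d$ and $h\times l = {}^h l + h = d+h = h+d$. Substituting $l = h\cdot d$ into the second identity and replacing ${}^h l$ by $d$ and $h\times l$ by $h+d$ throughout recovers the first identity verbatim. Since the two substitutions are mutually inverse, the two implications are genuinely symmetric.

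I do not expect any real obstacle: the argument is purely a substitution and is visibly reversible because $d\mapsto h\cdot d$ is a bijection. The only points demanding care are (i) confirming that each of the terms — in particular that the $\Yleft$-arguments $(h+d)\cdot m$ and $(h\times l)\cdot m$ agree, and that $f(h,d)$ becomes $f(h,{}^h l)$ — is relabelled correctly, and (ii) checking the two elementary identities $h + {}^h l = h\times l$ and ${}^h(h\cdot d)=d$ used to pass between the two quantifications. Both are immediate from~\eqref{equivalencia linear cycle set braza}, so the proposition follows.
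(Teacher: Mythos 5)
Your proposal is correct and is essentially identical to the paper's own proof: both directions are the same change of variables $d={}^hl$ (equivalently $l=h\cdot d$), using that left translation by $h$ is bijective and that $h+{}^hl=h\times l$ follows from~\eqref{equivalencia linear cycle set braza}. No gaps.
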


\begin{proof} Assume the first equality is true, for all $h,d,m\in H$; fix $l\in H$ and take $d\coloneqq h\times l-h = {}^hl$. Then $h+d=h\times l$ and~$h\cdot d = l$. Replacing these values into the first equality we obtain the second one. Conversely, assume the second equality holds, for all $h,l,m\in H$; fix $d\in H$ and take $l\coloneqq h\cdot d$. Then $h\times l=h+d$ and ${}^hl = d$, and re\-plac\-ing these values into the second equality we obtain the first one.
\end{proof}


\subsubsection[Construction of cocycles]{Construction of cocycles}
Given a set $X$, we let $\M_X$ denote the free magma generated by $X$. Hence, $\M_X = \coprod_{r\in \mathds{N}} X_r$, where $\coprod$ means disjoint union, and the $X_r$'s are recursively defined by
$$
X_1\coloneqq X\qquad\text{and}\qquad X_{r+1}\coloneqq \coprod_{p+q=r+1}  X_p\times X_q.
$$
If $a\in X_r$, then we say that $a$ has degree $r$ and we write $\deg(a) = r$. There is an obvious binary operation in $\M_X$.

\smallskip

Next, we consider the magma $\M_Z$, where $Z\coloneqq \{e_1,\dots,e_n\}$ and we let $+$ denote its binary operation. There~is a canonical magma homomorphism $\pi_Z\colon \M_Z\to H$, where $H$ is considered as a magma via $+$. For the sake of brevity, for each $r\in \mathds{N}$, we will set $\ov{Z}_r\coloneqq Z_1\coprod\cdots \coprod Z_r$, and for all~$a\in \M_Z$, we will write $a$ instead of $\pi_Z(a)$.

\smallskip

At the beginning of Subsection~\ref{Properties of cocycles} we prove that if $(\alpha,-f)$ is a $2$-cocycle of $(\wh{C}_N^*(H,I),\partial+D)$, then $f$ is $\alpha$-quasi linear in the second variable. The following definition is motivated by this fact.

\begin{definition}\label{def de f_{1r}} Let $Y\coloneqq \{a_1,\dots,a_s\}$. Given $f_{11}\colon Y\times Z\to I$, for $r\in \mathds{N}$, we define $f_{1r}\colon Y\times \ov{Z}_r\to I$, recursively~by
$$
f_{1,r+1}(a_j,m) = \begin{cases} f_{1r}(a_j,m) & \text{if $m\in \ov{Z}_r$,}\\ G_r(a_j,h,l) & \text{if $m = h+l$, with $h\in Z_u$, $l\in Z_v$ and $u+v=r+1$,} \end{cases}
$$
where
$$
G_r(a_j,h,l)\coloneqq f_{1r}(a_j,h)+f_{1r}(a_j,l)+\alpha(a_j\cdot h,a_j\cdot l)-a_j\blackdiamond\alpha(h,l).
$$
Then, we take $f_1\colon Y\times \M_Z\to I$ as the union of the $f_{1r}$ and $G\colon Y\times \M_Z\times \M_Z\to I$ as the union of the $G_r$.
\end{definition}

\begin{remark}\label{formula de quasilinealidad} Note that
$$
f_1(a_j,h+l) = G(a_j,h,l) = f_1(a_j,h)+f_1(a_j,l)+\alpha(a_j\cdot h,a_j\cdot l)-a_j\blackdiamond\alpha(h,l).
$$
\end{remark}

\begin{proposition}\label{construccion beta quasi lineal} For $1\le j\le m$ and $h,l,m\in \M_Z$, the following identities hold:
$$
f_1(a_j,(h+l)+m) = f_1(a_j,h+(l+m))\quad\text{and}\quad f_1(a_j,h+l) = f_1(a_j,l+h).
$$
\end{proposition}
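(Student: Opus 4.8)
The plan is to prove both identities by directly expanding each side with the recursive rule of Remark~\ref{formula de quasilinealidad}, so that everything reduces to the cocycle condition and the symmetry of $\alpha$ (recall that $\alpha=\alpha_{\gamma_1,\dots,\gamma_n}$ is a symmetric normalized cocycle, so $\alpha(x,y)=\alpha(y,x)$ and $\partial_{\mathrm v}^{03}(\alpha)=0$), together with the additivity of the two operations involved.

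The commutativity $f_1(a_j,h+l)=f_1(a_j,l+h)$ is essentially immediate. Expanding both sides with Remark~\ref{formula de quasilinealidad} gives $f_1(a_j,h)+f_1(a_j,l)+\alpha(a_j\cdot h,a_j\cdot l)-a_j\blackdiamond\alpha(h,l)$ on the one hand and $f_1(a_j,l)+f_1(a_j,h)+\alpha(a_j\cdot l,a_j\cdot h)-a_j\blackdiamond\alpha(l,h)$ on the other; these coincide because $\alpha$ is symmetric by~\eqref{normalidad y cociclo abeliano}.

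For associativity I would expand $f_1(a_j,(h+l)+m)$ and $f_1(a_j,h+(l+m))$, each by two applications of Remark~\ref{formula de quasilinealidad}. The linear terms $f_1(a_j,h)$, $f_1(a_j,l)$, $f_1(a_j,m)$ cancel, and after using the additivity of the left translation $a_j\cdot(-)$ from~\eqref{compatibilidades cdot suma y suma cdot} to write $a_j\cdot(h+l)=a_j\cdot h+a_j\cdot l$ and $a_j\cdot(l+m)=a_j\cdot l+a_j\cdot m$, the difference of the two sides breaks into two blocks. The terms free of $\blackdiamond$ assemble into $\alpha(a_j\cdot h,a_j\cdot l)+\alpha(a_j\cdot h+a_j\cdot l,a_j\cdot m)-\alpha(a_j\cdot l,a_j\cdot m)-\alpha(a_j\cdot h,a_j\cdot l+a_j\cdot m)$, which is $\partial_{\mathrm v}^{03}(\alpha)$ evaluated at $(a_j\cdot h,a_j\cdot l,a_j\cdot m)$ and hence vanishes since $\partial_{\mathrm v}^{03}(\alpha)=0$ (equivalently~\eqref{condicion de cociclo}). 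Using the $\mathds{Z}$-linearity of $a_j\blackdiamond(-)$ from~\eqref{eqq 2}, the remaining terms collect as $a_j\blackdiamond\bigl(\alpha(l,m)+\alpha(h,l+m)-\alpha(h,l)-\alpha(h+l,m)\bigr)=a_j\blackdiamond\bigl(\partial_{\mathrm v}^{03}(\alpha)(h,l,m)\bigr)=a_j\blackdiamond 0=0$, again by the cocycle condition.

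The step to watch is not an obstacle so much as careful bookkeeping: the cocycle identity must be invoked twice, once at the transported arguments $a_j\cdot h,a_j\cdot l,a_j\cdot m$ for the bare $\alpha$-terms, and once at $h,l,m$ inside the $\blackdiamond$. The additivity of the left translation is exactly what turns $a_j\cdot(h+l)$ into the sum that matches the first cocycle pattern, while the $\mathds{Z}$-linearity of $\blackdiamond$ is what lets the second cocycle expression be pulled through $a_j\blackdiamond(-)$; matching the two bracketings is precisely what these two applications of $\partial_{\mathrm v}^{03}(\alpha)=0$ accomplish.
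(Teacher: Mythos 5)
Your proof is correct and takes essentially the same route as the paper's: both expand each side via Remark~\ref{formula de quasilinealidad}, settle commutativity by the symmetry of $\alpha$, and reduce associativity to two applications of the cocycle identity~\eqref{beta es cociclo} — once at the transported arguments $a_j\cdot h$, $a_j\cdot l$, $a_j\cdot m$ (using the additivity of $a_j\cdot(-)$ from~\eqref{compatibilidades cdot suma y suma cdot}) and once at $h,l,m$ pulled through $a_j\blackdiamond(-)$ (using~\eqref{eqq 2}). The paper states these two reductions and cites the same three facts; your write-up merely spells out the bookkeeping.
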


\begin{proof} Since $\alpha$ is symmetric the second equality follows immediately from Definition~\ref{def de f_{1r}}. We next prove the first one. By Remark~\ref{formula de quasilinealidad}, we have
\begin{align*}
f_1(a_j,(h+l)+m)& = f_1(a_j,h+l)+f_1(a_j,m)+\alpha(a_j\cdot (h+l),a_j\cdot m)-a_j\blackdiamond\alpha(h+l,m)\\
& =  f_1(a_j,h)+f_1(a_j,l)+\alpha(a_j\cdot h,a_j\cdot l)-a_j\blackdiamond\alpha(h,l)\\
& + f_1(a_j,m)+\alpha(a_j\cdot (h+l),a_j\cdot m)-a_j\blackdiamond\alpha(h+l,m),
\shortintertext{and}
f_1(a_j,h+(l+m))& = f_1(a_j,h)+ f_1(a_j,l+m)+\alpha(a_j\cdot h,a_j\cdot (l+m))-a_j\blackdiamond\alpha(h,l+m)\\
& = f_1(a_j,h)+ f_1(a_j,l)+ f_1(a_j,m)+ \alpha(a_j\cdot l,a_j\cdot m)-a_j\blackdiamond\alpha(l,m)\\
& +\alpha(a_j\cdot h,a_j\cdot (l+m))-a_j\blackdiamond\alpha(h,l+m).
\end{align*}
Thus, in order to finish the proof, it suffices to check that
$$
\alpha(a_j\cdot h,a_j\cdot l)+\alpha(a_j\cdot (h+l),a_j\cdot m)= \alpha(a_j\cdot l,a_j\cdot m)+\alpha(a_j\cdot h,a_j\cdot (l+m))
$$
and
$$
a_j\blackdiamond\alpha(h+l,m)+a_j\blackdiamond\alpha(h,l)=a_j\blackdiamond\alpha(l,m)+a_j\blackdiamond\alpha(h,l+m).
$$
But these two equalities follow from~\eqref{compatibilidades cdot suma y suma cdot}, \eqref{eqq 2} and~\eqref{beta es cociclo}.
\end{proof}

\begin{corollary}\label{coro asociativo suma} The map $f_1\colon Y\times \M_Z\to I$, introduced in Definition~\ref{def de f_{1r}}, factorizes through $Y\times \AS_Z$, where~$\AS_Z$ is the free abelian semigroup generated by $Z$.
\end{corollary}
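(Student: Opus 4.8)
The plan is to realize $\AS_Z$ as the quotient $\M_Z/{\equiv}$, where $\equiv$ is the smallest congruence on the magma $(\M_Z,+)$ containing all pairs $\bigl((h+l)+m,\,h+(l+m)\bigr)$ and $\bigl(h+l,\,l+h\bigr)$; this is the standard presentation of the free abelian semigroup generated by $Z$. Thus, to prove the corollary it suffices to show that $a\equiv b$ implies $f_1(a_j,a)=f_1(a_j,b)$ for all $j$, since then $f_1\colon Y\times\M_Z\to I$ descends to $Y\times\AS_Z$.

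First I would introduce on $\M_Z$ the relation $a\approx b$, defined to hold when $f_1(a_j,a)=f_1(a_j,b)$ for every $j$ and, simultaneously, $\pi_Z(a)=\pi_Z(b)$ in $H$. Carrying this second clause along is the crucial point: although $f_1(a_j,-)$ is \emph{not} additive, Remark~\ref{formula de quasilinealidad} expresses $f_1(a_j,c+d)$ in terms of $f_1(a_j,c)$, $f_1(a_j,d)$ and the correction term $\alpha(a_j\cdot c,a_j\cdot d)-a_j\blackdiamond\alpha(c,d)$, and this correction term depends on $c,d$ only through their images $\pi_Z(c),\pi_Z(d)\in H$ (recall that we write $c$ for $\pi_Z(c)$, and that $\cdot$, $\blackdiamond$ and $\alpha$ live on $H$).

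Next I would verify that $\approx$ is a congruence. It is clearly an equivalence relation, so only compatibility with $+$ must be checked. If $c\approx c'$ and $d\approx d'$, then $\pi_Z(c+d)=\pi_Z(c)+\pi_Z(d)=\pi_Z(c')+\pi_Z(d')=\pi_Z(c'+d')$, so the $H$-clause is preserved; and by Remark~\ref{formula de quasilinealidad}, together with the equalities $\pi_Z(c)=\pi_Z(c')$ and $\pi_Z(d)=\pi_Z(d')$, the correction terms for $c+d$ and for $c'+d'$ coincide, whence $f_1(a_j,c+d)=f_1(a_j,c'+d')$ for every $j$. Hence $c+d\approx c'+d'$. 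Moreover, $\approx$ contains the generating relations of $\equiv$: Proposition~\ref{construccion beta quasi lineal} supplies exactly the equalities of $f_1$-values corresponding to the associativity and commutativity moves, while the matching $H$-images agree because $(H,+)$ is associative and commutative.

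Since $\approx$ is then a congruence containing all generators of $\equiv$, minimality of $\equiv$ gives $\equiv\,\subseteq\,\approx$, so $a\equiv b$ forces $f_1(a_j,a)=f_1(a_j,b)$ for all $j$, which is precisely the desired factorization. The only genuine obstacle is the one just addressed: the non-additivity of $f_1(a_j,-)$ blocks a direct appeal to the universal property of the free abelian semigroup, and the resolution is to notice that the defect of additivity is governed entirely by the $H$-image, which is exactly why $\approx$ — rather than the naive relation ``the $f_1$-values agree'' — is the correct congruence to work with.
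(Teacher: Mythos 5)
Your proof is correct and follows essentially the same route as the paper: the corollary is deduced there directly from Proposition~\ref{construccion beta quasi lineal}, which is exactly the key input you use, with the rest being the standard descent through the congruence presenting $\AS_Z$ as a quotient of $\M_Z$. Your additional care in replacing the naive relation ``the $f_1$-values agree'' by the stronger relation that also requires $\pi_Z(a)=\pi_Z(b)$ --- needed because the correction term in Remark~\ref{formula de quasilinealidad} depends on the images in $H$, so the naive relation is not obviously compatible with $+$ --- is a correct and worthwhile spelling-out of the step the paper leaves implicit.
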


We will also write $f_1\colon Y\times \AS_Z\to I$ to denote the map induced by $f_1\colon Y\times \M_Z\to I$.

\begin{proposition}\label{casi dependencia de f} The map $f_1\colon Y\times \AS_Z\to I$ satisfies the following identities:
\begin{align}
& f_1(a_j,\lambda_i e_i) = \lambda_i f_1(a_j,e_i)+\sum_{\ell=1}^{\lambda_i-1}\alpha(a_j\cdot e_i,\ell(a_j\cdot e_i)),\label{quasilinealij 1}\\
& f_1\left(a_j,\sum_{i=1}^n \lambda_i e_i\right)= \sum_{i=1}^n f_1(a_j,\lambda_i e_i) +\sum_{k=2}^n \alpha\left(\sum_{i=1}^{k-1}\lambda_i (a_j\cdot e_i), \lambda_k (a_j \cdot e_k)\right), \label{quasilinealij 2}\\
\shortintertext{and}
& f_1(a_j,d_i e_i) = d_i f_1(a_j,e_i) +\sum_{k=1}^{d_i-1}\alpha(a_j\cdot e_i,k(a_j\cdot e_i))-a_j\blackdiamond \gamma_i,\label{quasilinealij 3}
\end{align}
where $1\le i\le n$, $1\le j\le s$ and $0\le \lambda_i<d_i$,
\end{proposition}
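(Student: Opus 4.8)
The plan is to mirror, almost verbatim, the proof of Proposition~\ref{dependencia de f}, replacing the cocycle $f$ by $f_1$ and the element $h$ by $a_j$. The essential input is Remark~\ref{formula de quasilinealidad}, which asserts that $f_1(a_j,-)$ satisfies exactly the same quasi-linearity identity~\eqref{beta quasi linear} that drove that earlier proof, together with Corollary~\ref{coro asociativo suma}, which guarantees that the expressions $f_1(a_j,\lambda_i e_i)$, $f_1(a_j,\sum_i\lambda_i e_i)$ and $f_1(a_j,d_i e_i)$ are well defined, independent of how the sums are bracketed or ordered. Throughout I would also use the additivity of $\cdot$ in its second argument from~\eqref{compatibilidades cdot suma y suma cdot}, namely $a_j\cdot(\lambda e_i)=\lambda(a_j\cdot e_i)$, and the explicit values of $\alpha=\alpha_{\gamma_1,\dots,\gamma_n}$ recorded in Remark~\ref{nota1}.

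For~\eqref{quasilinealij 1} I would argue by induction on $\lambda_i$, applying Remark~\ref{formula de quasilinealidad} to the splitting $\lambda_i e_i = e_i + (\lambda_i-1)e_i$. Since $1+(\lambda_i-1)=\lambda_i<d_i$, the value $\alpha(e_i,(\lambda_i-1)e_i)$ vanishes, so the term $a_j\blackdiamond\alpha(e_i,(\lambda_i-1)e_i)$ disappears, while $\alpha(a_j\cdot e_i,a_j\cdot(\lambda_i-1)e_i)=\alpha(a_j\cdot e_i,(\lambda_i-1)(a_j\cdot e_i))$ supplies the last summand; the inductive hypothesis then yields the claimed formula. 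For~\eqref{quasilinealij 2} I would prove by induction on $k$ the partial-sum statement with $k$ in place of $n$, splitting $\sum_{i=1}^k\lambda_i e_i = \bigl(\sum_{i=1}^{k-1}\lambda_i e_i\bigr)+\lambda_k e_k$. Here $\alpha(\sum_{i<k}\lambda_i e_i,\lambda_k e_k)=0$ because the two arguments have disjoint support, so the $a_j\blackdiamond$-term again vanishes and the surviving $\alpha\bigl(a_j\cdot\sum_{i<k}\lambda_i e_i,a_j\cdot\lambda_k e_k\bigr)=\alpha\bigl(\sum_{i<k}\lambda_i(a_j\cdot e_i),\lambda_k(a_j\cdot e_k)\bigr)$ is precisely the new summand.

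The only conceptually new point is~\eqref{quasilinealij 3}. In Proposition~\ref{dependencia de f} the analogous identity~\eqref{quasilineal 3} is an equation set equal to zero, because $d_i e_i = 0$ in $H$ forces $f(h,d_i e_i)=f(h,0)=0$. Here, by contrast, we work in the free abelian semigroup $\AS_Z$, where $d_i e_i\ne 0$, so $f_1(a_j,d_i e_i)$ is a genuine value that survives on the left-hand side. I would apply Remark~\ref{formula de quasilinealidad} to the splitting $d_i e_i = e_i+(d_i-1)e_i$, substitute~\eqref{quasilinealij 1} with $\lambda_i=d_i-1$ for the term $f_1(a_j,(d_i-1)e_i)$, use $a_j\cdot(d_i-1)e_i=(d_i-1)(a_j\cdot e_i)$, and invoke the value $\alpha(e_i,(d_i-1)e_i)=\gamma_i$ (since $1+(d_i-1)=d_i\ge d_i$) to produce the term $-a_j\blackdiamond\gamma_i$. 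Collecting the $\alpha$-contributions into the single sum $\sum_{k=1}^{d_i-1}\alpha(a_j\cdot e_i,k(a_j\cdot e_i))$ then gives~\eqref{quasilinealij 3}.

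I expect the induction in~\eqref{quasilinealij 2} to be the most calculation-heavy step, purely because of index bookkeeping, but none of the three identities presents a real difficulty once Remark~\ref{formula de quasilinealidad} is in hand. The single place where the argument genuinely departs from the cocycle computation of Proposition~\ref{dependencia de f} is in~\eqref{quasilinealij 3}, where the freeness of $\AS_Z$ means $f_1(a_j,d_i e_i)$ is not annihilated; that is the one point that must be handled with care, and it is also the reason the statement is an honest formula for $f_1(a_j,d_i e_i)$ rather than a relation equated to zero.
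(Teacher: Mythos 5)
Your proposal is correct and follows essentially the same route as the paper's proof: identities \eqref{quasilinealij 1} and \eqref{quasilinealij 3} via Remark~\ref{formula de quasilinealidad} together with the values $\alpha(e_i,\lambda_i e_i)=0$ for $\lambda_i<d_i-1$ and $\alpha(e_i,(d_i-1)e_i)=\gamma_i$, and \eqref{quasilinealij 2} via the partial-sum induction. Your closing observation --- that \eqref{quasilinealij 3} is an honest formula rather than a relation set to zero because $d_ie_i\neq 0$ in $\AS_Z$ --- correctly identifies the one point where this proposition diverges from Proposition~\ref{dependencia de f}, and is exactly why Corollary~\ref{coro casi dependencia de f} can later turn the vanishing of that formula into the factorization criterion.
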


\begin{proof} Equalities~\eqref{quasilinealij 1} and~\eqref{quasilinealij 3} follow by induction on $\lambda_i$, using Remark~\ref{formula de quasilinealidad}, that $\alpha(e_i,\lambda_i e_i) = 0$, if $\lambda_i<d_i-1$, and that $\alpha(e_i,(d_i-1)e_i) = \gamma_i$. Finally~\eqref{quasilinealij 2} follows proving that
$$
f_1\left(a_j,\sum_{i=1}^r \lambda_i e_i\right)= \sum_{i=1}^r f_1(a_j,\lambda_i e_i) +\sum_{k=2}^r \alpha\left(\sum_{i=1}^{k-1}\lambda_i (a_j\cdot e_i), \lambda_k (a_j\cdot e_k)\right)
$$
for $1\le r\le n$ (proceed by induction on $r$, using Remark~\ref{formula de quasilinealidad} and the definition of $\alpha$).
\end{proof}

\begin{corollary}\label{coro casi dependencia de f} For $1\le i\le n$ and $1\le j\le s$, the equalities
\begin{equation}\label{basica}
d_i f_1(a_j,e_i) +\sum_{k=1}^{d_i-1}\alpha(a_j\cdot e_i,k (a_j\cdot e_i))-a_j\blackdiamond \gamma_i = 0
\end{equation}
hold if and only if the map $f_1\colon Y\times M_Z\to I$, introduced in Definition~\ref{def de f_{1r}}, factorizes through $Y\times H$.
\end{corollary}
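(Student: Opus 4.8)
The plan is to reduce everything to the single family of identities $f_1(a_j, d_i e_i) = 0$ and then to recognize these as the statement that $f_1(a_j,-)$ is invariant under adjoining a block $d_i e_i$, which is exactly what descent to $H$ requires.

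First, by~\eqref{quasilinealij 3} the quantity on the left of~\eqref{basica} equals $f_1(a_j, d_i e_i)$; hence~\eqref{basica} (for a fixed pair $(i,j)$) is equivalent to $f_1(a_j, d_i e_i)=0$, and I would work with this reformulation throughout. By Corollary~\ref{coro asociativo suma} we may view $f_1$ as a map on $Y\times\AS_Z$, and we factor the canonical surjection as $\M_Z\to\AS_Z\twoheadrightarrow H$; factorization through $Y\times H$ then means precisely that, for each $j$, the map $m\mapsto f_1(a_j,m)$ is constant on the fibres of $\AS_Z\to H$.

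The computational heart is the following collapse of the quasilinearity formula. For $m\in\AS_Z$, Remark~\ref{formula de quasilinealidad} gives
\[
f_1(a_j, m+d_i e_i) = f_1(a_j,m) + f_1(a_j, d_i e_i) + \alpha(a_j\cdot m,\, a_j\cdot(d_i e_i)) - a_j\blackdiamond\alpha(m, d_i e_i).
\]
Since $e_i$ has additive order $d_i$, we have $d_i e_i = 0$ in $H$; hence, reading $\cdot$, $\alpha$ and $\blackdiamond$ through $\pi_Z$, we get $a_j\cdot(d_i e_i)=a_j\cdot 0=0$ and $\alpha(m,d_i e_i)=0$, while the normalizations of $\alpha$ and the additivity of $\blackdiamond$ in~\eqref{eqq 2} give $\alpha(a_j\cdot m,0)=0$ and $a_j\blackdiamond 0=0$. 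The identity therefore reduces to
\[
f_1(a_j, m+d_i e_i) = f_1(a_j,m) + f_1(a_j, d_i e_i)\qquad\text{for all }m\in\AS_Z.
\]
For the forward implication, suppose $f_1$ factors through $Y\times H$. As $d_i e_i$ and $2 d_i e_i$ both map to $0\in H$, they take the same value under $f_1(a_j,-)$; combined with the reduced identity (taking $m=d_i e_i$) this yields $f_1(a_j,d_ie_i)=2f_1(a_j,d_ie_i)$, so $f_1(a_j,d_ie_i)=0$ and~\eqref{basica} holds. Conversely, if~\eqref{basica} holds then $f_1(a_j,d_ie_i)=0$, so the reduced identity becomes $f_1(a_j,m+d_ie_i)=f_1(a_j,m)$ for all $m$ and $i$. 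Because the kernel congruence of $\AS_Z\twoheadrightarrow H$ is generated by the pairs $(m+d_ie_i,m)$, this invariance forces $f_1(a_j,-)$ to be constant on each fibre; explicitly, one deletes blocks $d_ie_i$ to reduce any $m$ to its normal form $\sum_i h_i e_i$ with $0\le h_i<d_i$, thereby defining the descended map, and sets its value at $0\in H$ to be $0$ — consistent, since every $m$ with $\pi_Z(m)=0$ reduces to a single block $d_{i_0}e_{i_0}$ of value $0$.

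I expect the only real difficulty to be bookkeeping rather than algebra: one must argue that invariance under the elementary moves $m\mapsto m+d_ie_i$ suffices for constancy on fibres, which requires care because $\AS_Z$ has no identity and the fibre over $0\in H$ possesses no normal-form representative inside $\AS_Z$. Once the descent to $0$ is pinned down via a single block $d_{i_0}e_{i_0}$, well-definedness of the factored map is immediate, and the collapse of the quasilinearity formula above is short.
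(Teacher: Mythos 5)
Your proposal is correct and takes essentially the same approach as the paper: the paper's entire proof of this corollary is the one-line citation of Corollary~\ref{coro asociativo suma} and equalities~\eqref{quasilinealij 3}, which are precisely the two ingredients you elaborate. Your write-up merely supplies the details the paper leaves implicit --- the reformulation of~\eqref{basica} as $f_1(a_j,d_ie_i)=0$, the collapse of the $\alpha$-terms in the quasilinearity formula when a block $d_ie_i$ is adjoined, and the fibre-constancy argument (including the correct handling of the fibre over $0$, which has no normal-form representative in $\AS_Z$) for the descent from $Y\times\AS_Z$ to $Y\times H$.
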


\begin{proof} By Corollary~\ref{coro asociativo suma} and equalities~\eqref{quasilinealij 3}.
\end{proof}

\begin{notation}\label{factorizacion de f_1} We will also write $f_1\colon Y\times H\to I$ to denote the map induced by $f_1\colon Y\times \M_Z\to I$.
\end{notation}

Recall that $Y\coloneqq \{a_1,\dots,a_s\}$. Next, we consider the magma $\M_Y$, and we let $\times$ denote its binary operation. There is a canonical magma homomorphism $\pi_Y\colon \M_Y\to H$, where $H$ is considered as a magma via $\times$. For each~\hbox{$r\in \mathds{N}$}, we will set $\ov{Y}_r\coloneqq Y_1\coprod\cdots \coprod Y_r$, and for all~$h\in \M_Y$, we will write $h$ instead of $\pi_Y(h)$.

\smallskip

The following definition is motivated by condition~\eqref{beta quasi linear}.

\begin{definition} Let $A\subseteq \M_Y$. A map $f\colon A\times H\to I$ is {\em $\alpha$-quasi linear in the second variable}, if
$$
f(h,l+m)=f(h,l)+f(h,m)+\alpha(h\cdot l,h\cdot m)-h\blackdiamond\alpha(l,m),
$$
for all $h\in A$ and $l,m\in H$.
\end{definition}

\begin{remark}\label{f_1 es quasi lineal} By Remark~\ref{formula de quasilinealidad}, the map $f_1\colon Y\times H\to I$ is $\alpha$-quasi linear in the second variable.
\end{remark}

The following definition is motivated by identity~\eqref{delta horizontal es cero} and Proposition~\ref{equivalencia cociclo con times}.

\begin{definition}\label{def de f} Let $f_1\colon Y\times H\to I$ be as above. Assume that conditions~\eqref{basica} are fulfilled. For $r\in \mathds{N}$, we define the maps $f_r\colon \ov{Y}_r\times H\to I$, recursively by
$$
f_{r+1}(h,d) = \begin{cases} f_r(h,d) & \text{if $h\in \ov{Y}_r$,}\\ F_r(l,m,d) & \text{if $h = l\times m$, with $l\in Y_u$, $m\in Y_v$ and $u+v = r+1$,} \end{cases}
$$
where
$$
F_r(l,m,d)\coloneqq f_r(m,l\cdot d)+ m\blackdiamond f_r(l,d)+ m\blackdiamond f_r(l,{}^lm)\Yleft (l\times m)\cdot d - (l\times m)\blackdiamond \alpha(l,{}^lm)\Yleft (l\times m)\cdot d.
$$
Then, we take $f\colon \M_Y\times H\to I$ as the union of the $f_r$ and $F\colon \M_Y\times \M_Y\times H\to I$ as the union of the $F_r$.
\end{definition}

\begin{remark}\label{def de F(a,b,c)} Note that
$$
f(h\times l,m)= f(l,h\cdot m)+ l\blackdiamond f(h,m)+ l\blackdiamond f(h,{}^hl) \Yleft (h\times l)\cdot m -(h\times l)\blackdiamond \alpha(h,{}^hl)\Yleft (h\times l)\cdot m.
$$
\end{remark}

\begin{proposition}\label{quasi linear invariate por times} The map $f\colon \M_Y\times H\to I$ is $\alpha$-quasi linear in the second variable.
\end{proposition}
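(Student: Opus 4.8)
The plan is to argue by induction on the degree $r$ of the argument $h\in\M_Y$, since both the value $f(h,d)$ and the sought identity are defined level by level through Definition~\ref{def de f}. For the base case $h\in Y_1=Y$, the map $f$ restricts to $f_1\colon Y\times H\to I$, which is $\alpha$-quasi linear in the second variable by Remark~\ref{f_1 es quasi lineal}. For the inductive step I would write $h=h_1\times h_2$ with $h_1\in Y_u$, $h_2\in Y_v$ and $u+v=r+1$, so that $h_1,h_2$ have strictly smaller degree and the inductive hypothesis applies to each of them.

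The core of the proof is to evaluate the defining relation of Remark~\ref{def de F(a,b,c)} at the second argument $l+m$, namely
$$
f(h,l+m)=f(h_2,h_1\cdot(l+m))+h_2\blackdiamond f(h_1,l+m)+ h_2\blackdiamond f(h_1,{}^{h_1}h_2)\Yleft h\cdot(l+m)-h\blackdiamond\alpha(h_1,{}^{h_1}h_2)\Yleft h\cdot(l+m),
$$
and to compare it term by term with $f(h,l)+f(h,m)+\alpha(h\cdot l,h\cdot m)-h\blackdiamond\alpha(l,m)$, where $f(h,l)$ and $f(h,m)$ are themselves expanded through Remark~\ref{def de F(a,b,c)}. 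Here I would deploy, in order: the distributivity $h_1\cdot(l+m)=h_1\cdot l+h_1\cdot m$ and $h\cdot(l+m)=h\cdot l+h\cdot m$ from~\eqref{compatibilidades cdot suma y suma cdot}; the inductive hypothesis applied to $f(h_1,l+m)$ and to $f(h_2,h_1\cdot l+h_1\cdot m)$; the additivity of $\blackdiamond$ in its second entry from~\eqref{eqq 2} to split $h_2\blackdiamond f(h_1,l+m)$; and the right-linearity of $\Yleft$ to split every $\Yleft h\cdot(l+m)$ term into its $\Yleft h\cdot l$ and $\Yleft h\cdot m$ parts, so that the two purely $\Yleft$-contributions of $f(h,l+m)$ match exactly those coming from $f(h,l)+f(h,m)$ and cancel against each other. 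A crucial simplification is the identity $h_2\blackdiamond(h_1\blackdiamond\alpha(l,m))=(h_1\times h_2)\blackdiamond\alpha(l,m)=h\blackdiamond\alpha(l,m)$, which is precisely the first relation in~\eqref{eqq 2}; this converts the unwanted term produced by the inductive hypothesis on $f(h_1,l+m)$ into the required $-h\blackdiamond\alpha(l,m)$. After these manipulations, and after the two copies of $h_2\blackdiamond\alpha(h_1\cdot l,h_1\cdot m)$ cancel, the only discrepancy between the two sides is the $\alpha$-term.

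I expect the main (and essentially only) obstacle to be isolating and disposing of this residual $\alpha$-term: the expansion of $f(h,l+m)$ produces $\alpha\bigl(h_2\cdot(h_1\cdot l),\,h_2\cdot(h_1\cdot m)\bigr)$, whereas the target identity requires $\alpha(h\cdot l,h\cdot m)=\alpha\bigl((h_1\times h_2)\cdot l,(h_1\times h_2)\cdot m\bigr)$. These two agree because of~\eqref{accion tines con punto}, which gives $(h_1\times h_2)\cdot l=h_2\cdot(h_1\cdot l)$ and likewise $(h_1\times h_2)\cdot m=h_2\cdot(h_1\cdot m)$. Substituting this equality closes the comparison and completes the inductive step; since every identity invoked is valid for arbitrary $l,m\in H$, the conclusion holds for all $h\in\M_Y$.
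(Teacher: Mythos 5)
Your proof is correct and follows essentially the same route as the paper's: induction on the degree in $\M_Y$, expanding $f(h_1\times h_2,\,\cdot\,)$ via Remark~\ref{def de F(a,b,c)}, cancelling the $\Yleft$-terms using right-linearity of $\Yleft$ and linearity of $\cdot$ in the second variable, applying the inductive hypothesis to both $f$-terms, and finishing with the first identity in~\eqref{eqq 2} together with~\eqref{accion tines con punto}. No changes needed.
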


\begin{proof} By Remark~\ref{f_1 es quasi lineal}, we know that $f_1$ is $\alpha$-quasi linear in the second variable. Assume by induction that $f_r$ is. In order to check that $f_{r+1}$ is also, we must prove that
\begin{equation}\label{cuasi lineal de times}
f_{r+1}(h\times l,m+d)-f_{r+1}(h\times l,m)-f_{r+1}(h\times l,d)= \alpha((h\times l)\cdot m,(h\times l)\cdot d)-(h\times l)\blackdiamond \alpha(m,d),
\end{equation}
for all $h\in Y_s$ and $l\in Y_t$ with $s+t = r+1$. Since $\Yleft$ is bilinear and $\cdot$ is linear in the second variable, we have
\begin{align*}
& l\blackdiamond f_r(h,{}^hl) \Yleft (h\times l)\cdot (m+d)- l\blackdiamond f_r(h,{}^hl) \Yleft (h\times l)\cdot m - l\blackdiamond f_r(h,{}^hl) \Yleft (h\times l)\cdot d = 0
\shortintertext{and}
&(h\times l)\blackdiamond \alpha(h,{}^hl)\Yleft (h\times l)\cdot (m+d)-(h\times l)\blackdiamond \alpha(h,{}^hl)\Yleft (h\times l)\cdot m-(h\times l)\blackdiamond \alpha(h,{}^hl)\Yleft (h\times l)\cdot d = 0.
\end{align*}
Moreover, since $f_r$ is $\alpha$-quasi linear in the second variable,
\begin{align*}
&f_r(l,h\cdot (m+d))-f_r(l,h\cdot m)-f_r(l,h\cdot d)=\alpha(l\cdot(h\cdot m),l\cdot (h\cdot d))-l\blackdiamond \alpha(h\cdot m,h\cdot d)
\shortintertext{and}
& l\blackdiamond f_r(h,m+d)-l\blackdiamond f_r(h,m)-l\blackdiamond f_r(h,d)=l\blackdiamond \bigl(\alpha(h\cdot m,h\cdot d)-h \blackdiamond \alpha(m,d)\bigr).
\end{align*}
Hence
\begin{align*}
f_{r+1}(h\times l,m+d)-f_{r+1}(h\times l,m)-f_{r+1}(h\times l,d) &= F_r(h,l,m+d) - F_r(h,l,m)-F_r(h,l,d)\\
&= \alpha(l\cdot(h\cdot m),l\cdot (h\cdot d))-l\blackdiamond (h\blackdiamond \alpha(m,d)),
\end{align*}
which concludes the proof, since
$$
l\cdot(h\cdot m)=(h\times l)\cdot m,\quad l\cdot(h\cdot d)=(h\times l)\cdot d\quad\text{and}\quad l\blackdiamond (h\blackdiamond \alpha(m,d))=(h\times l) \blackdiamond \alpha(m,d),
$$
by~\eqref{accion tines con punto} and the first condition in~\eqref{eqq 2}.
\end{proof}

\begin{proposition}\label{asociativo} For all $h,l,m\in \M_Y$ and $d\in H$, we have
\begin{equation}\label{pepe1}
f((h\times l)\times m,d) = f(h\times (l\times m),d).
\end{equation}
\end{proposition}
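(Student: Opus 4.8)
The plan is to prove \eqref{pepe1} by expanding both sides with the defining recursion for $f$ (Remark~\ref{def de F(a,b,c)}) and matching the results. Since $\pi_Y$ is a magma homomorphism into the group $(H,\times)$, the elements $(h\times l)\times m$ and $h\times(l\times m)$ have the same image $P\coloneqq h\times l\times m$ in $H$, so every factor in \eqref{pepe1} computed through $H$ --- all occurrences of $\cdot$, $\blackdiamond$, $\Yleft$ and $\alpha$ --- agrees on the two sides; in particular $P\cdot d$ is unambiguous and, by \eqref{accion tines con punto}, $P\cdot d=m\cdot\bigl((h\times l)\cdot d\bigr)=(l\times m)\cdot(h\cdot d)$. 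First I would apply Remark~\ref{def de F(a,b,c)} to the outermost product on each side, writing the left-hand side through $f(m,-)$ and $f(h\times l,-)$ and the right-hand side through $f(l\times m,-)$ and $f(h,-)$; a second application to the composite arguments $f(h\times l,-)$ and $f(l\times m,-)$ expresses both sides entirely through $f(h,-)$, $f(l,-)$ and $f(m,-)$. (Both the recursion and the quasi linearity used below are valid for all elements of $\M_Y$, so no induction is required.) After this double expansion a first batch of terms matches at once, using $(h\times l)\cdot d=l\cdot(h\cdot d)$ from \eqref{accion tines con punto} together with $m\blackdiamond(l\blackdiamond y)=(l\times m)\blackdiamond y$ and the additivity of $\blackdiamond$ from \eqref{eqq 2}.

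The main obstacle is the surviving terms that carry both a $\blackdiamond$ and a $\Yleft$, such as $m\blackdiamond\bigl((l\blackdiamond f(h,{}^hl))\Yleft(h\times l)\cdot d\bigr)$ and its $\alpha$-analogue. These I would handle with \eqref{triangulo compatible con times}: since $P\cdot d=m\cdot\bigl((h\times l)\cdot d\bigr)$, that relation (with the $h$, $l$ there taken to be $m$ and $(h\times l)\cdot d$) rewrites $m\blackdiamond\bigl(w\Yleft(h\times l)\cdot d\bigr)$ as $m\blackdiamond w\Yleft P\cdot d-m\blackdiamond(w\Yleft m)\Yleft P\cdot d$ for every $w\in I$. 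Applying this with $w=l\blackdiamond f(h,{}^hl)$ and with $w=(h\times l)\blackdiamond\alpha(h,{}^hl)$, the correction terms $m\blackdiamond(w\Yleft m)\Yleft P\cdot d$ cancel exactly against the terms produced by expanding $f(h\times l,{}^{h\times l}m)$, where one uses $(h\times l)\cdot{}^{h\times l}m=m$ and $h\cdot{}^{h\times l}m={}^lm$, both consequences of \eqref{accion tines con inversa de punto}. Every surviving term then carries a common trailing $\Yleft P\cdot d$, so by left linearity of $\Yleft$ it suffices to prove the corresponding identity for the pre-$\Yleft$ sums in $I$.

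To finish I would treat that residual identity in $I$. The surviving $f(h,-)$ contributions are $f(h,{}^hl)$ and $f(h,{}^h({}^lm))$, each with prefactor $(l\times m)\blackdiamond$. As $x\mapsto h\cdot x$ and its inverse ${}^h(-)$ are additive and $l\times m=l+{}^lm$ by \eqref{equivalencia linear cycle set braza}, we have ${}^hl+{}^h({}^lm)={}^h(l\times m)$, so the $\alpha$-quasi linearity of $f$ (Proposition~\ref{quasi linear invariate por times}) combines the two into $(l\times m)\blackdiamond f(h,{}^h(l\times m))$ --- which matches the other side --- plus the corrections $-(l\times m)\blackdiamond\alpha(l,{}^lm)$ (also matching) and $P\blackdiamond\alpha({}^hl,{}^h({}^lm))$, the latter obtained via the first identity in \eqref{eqq 2}. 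What is then left carries throughout the prefactor $P\blackdiamond(-)$; by its additivity (from \eqref{eqq 2}) it suffices to check the underlying identity among $\alpha$-values, and writing $u\coloneqq{}^hl$ and $v\coloneqq{}^h({}^lm)$ (so that $h\times l=h+u$ and ${}^{h\times l}m=v$) this is exactly
$$
\alpha(u,v)-\alpha(h+u,v)+\alpha(h,u+v)-\alpha(h,u)=0,
$$
i.e.\ the cocycle condition $\partial_{\mathrm v}^{03}(\alpha)=0$ recorded in \eqref{beta es cociclo}. The one genuine subtlety throughout is the repeated appeal to \eqref{triangulo compatible con times} to commute $\blackdiamond$ past $\Yleft$; every other step is additivity or a structural identity of the linear cycle set.
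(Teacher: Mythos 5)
Your proposal is correct and follows essentially the same route as the paper's proof: the same double expansion of both sides via Remark~\ref{def de F(a,b,c)}, cancellation of the $\blackdiamond$--$\Yleft$ cross terms via \eqref{triangulo compatible con times}, recombination of the two $f(h,-)$ contributions via the $\alpha$-quasi linearity of Proposition~\ref{quasi linear invariate por times}, and a final reduction to the cocycle identity \eqref{beta es cociclo}, with only the order of these last steps differing. Your remark that no induction is needed is also sound: the paper nominally sets up an induction on $\deg(h)+\deg(l)+\deg(m)$ but never invokes the inductive hypothesis, since every tool used (the defining recursion, quasi linearity, and \eqref{triangulo compatible con times}) is valid for arbitrary elements of $\M_Y$.
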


\begin{proof} We proceed by induction on $\deg(h)+\deg(l)+\deg(m)$. Assume that the identity~\eqref{pepe1} holds when the sum of these degrees is less than or equal to $r$ and take $h,l,m\in \M_Y$ with $\deg(h)+\deg(l)+\deg(m) = r+1$. By Remark~\ref{def de F(a,b,c)}, we have
\begin{align*}
& f((h\times l)\times m,d) = f(m,(h\times l)\cdot d)+ m\blackdiamond f(h\times l,d)+ m\blackdiamond f(h\times l,{}^{h\times l}m)\Yleft t\cdot d - t\blackdiamond \alpha(h\times l,{}^{h\times l}m)\Yleft t\cdot d\\
\shortintertext{and}
& f(h\times (l\times m),d) = f(l\times m,h\cdot d)+ (l\times m)\blackdiamond f(h,d)+ (l\times m)\blackdiamond f(h,{}^h(l\times m)) \Yleft t\cdot d-t\blackdiamond \alpha(h,{}^h(l\times m))\Yleft t\cdot d,
\end{align*}
where $t\coloneqq h\times l\times m$. Using now Definitions~\ref{def de F(a,b,c)} and~\ref{def de f}, and taking into account that
$$
(h\times l)\cdot {}^{h\times l}m = m,\quad h\cdot {}^{h\times l}m={}^lm\quad\text{and}\quad (l\times m)\cdot (h\cdot d) = t\cdot d,
$$
we obtain
\begin{align*}
& f((h\times l)\times m,d) = f(m,u\cdot d)+m\blackdiamond\bigl(f(l,h\cdot d)+ l\blackdiamond f(h,d)+ l\blackdiamond f(h,{}^hl) \Yleft u\cdot d - u\blackdiamond \alpha(h,{}^hl)\Yleft u\cdot d\bigr)\\
&\phantom{f((h\times l)\times m,d)} + m\blackdiamond\bigl(f(l,{}^lm)+ l\blackdiamond f(h,{}^um)+ l\blackdiamond f(h,{}^hl) \Yleft m\bigr) \Yleft t\cdot d - m\blackdiamond \bigl(u\blackdiamond \alpha(h,{}^hl)\Yleft m\bigr) \Yleft t\cdot d\\
& \phantom{f((h\times l)\times m,d)} - t\blackdiamond \alpha(u,{}^um)\Yleft t\cdot d\\
\shortintertext{and}
& f(h\times (l\times m),d)  = f(m,l\cdot(h\cdot d))+m\blackdiamond f(l,h\cdot d)+ m\blackdiamond f(l,{}^lm)\Yleft t\cdot d- v\blackdiamond \alpha(l,{}^lm)\Yleft t\cdot d\\
& \phantom{f(h\times (l\times m),d) }+ v\blackdiamond f(h,d)+ v\blackdiamond f(h,{}^hv)\Yleft t\cdot d - t\blackdiamond \alpha(h,{}^hv)\Yleft t\cdot d,
\end{align*}
where $u\coloneqq h\times l = h+{}^u l$ and $v\coloneqq l\times m$. Since $l\cdot(h\cdot d)=u\cdot d$, $v\blackdiamond f(h,d)= m\blackdiamond (l \blackdiamond f(h,d))$, $\blackdiamond $ is linear in the second argument and $\Yleft$ is linear in the first argument, in order to check equality~\eqref{pepe1}, we are reduce to prove that
\begin{equation}\label{asociativo equivalente}
\begin{aligned}
m\blackdiamond\bigl(l\blackdiamond f(h,{}^hl) & \Yleft u\cdot d-u\blackdiamond \alpha(h,{}^h l)\Yleft u\cdot d \bigr)+ m\blackdiamond \left(l\blackdiamond f(h,{}^um)\right)\Yleft t\cdot d \\
&+ m\blackdiamond \left(l\blackdiamond f(h,{}^hl) \Yleft m\right)\Yleft t\cdot d - m\blackdiamond \left(u\blackdiamond \alpha(h,{}^h l)\Yleft m\right) \Yleft t\cdot d -t\blackdiamond \alpha(u,{}^um)\Yleft t\cdot d\\
& = -v\blackdiamond \alpha(l,{}^lm)\Yleft t\cdot d + v\blackdiamond f(h,{}^hv) \Yleft t\cdot d -t\blackdiamond \alpha(h,{}^hv)\Yleft t\cdot d.
\end{aligned}
\end{equation}
Note now that, by the $\alpha$-quasi linearity of $f$ in the second variable, we have
\begin{equation*}\label{f de a y R}
f(h,{}^um)+\alpha(l,{}^lm)-f(h,{}^um+{}^hl)=-f(h,{}^hl)+h\blackdiamond \alpha({}^hl,{}^um).
\end{equation*}
Since $v\blackdiamond f(h,{}^um) = m\blackdiamond \bigl(b\blackdiamond f(h,{}^um)\bigr)$ and ${}^um+{}^hl = {}^h v$, this implies that
\begin{equation*}
m\blackdiamond \bigl(l\blackdiamond f(h,{}^um)\bigr)+v\blackdiamond \alpha(l,{}^lm)-v\blackdiamond f(h,{}^hv) = - v\blackdiamond f(h,{}^hl) + t\blackdiamond \alpha({}^hl,{}^um).
\end{equation*}
Using this and that $\blackdiamond$ is linear in the second variable, we conclude that~\eqref{asociativo equivalente} is equivalent to
\begin{align*}
t\blackdiamond \alpha(h,{}^h v)&\Yleft t\cdot d = -m\blackdiamond\bigl(l\blackdiamond f(h,{}^h l) \Yleft u\cdot d\bigr)+m\blackdiamond\bigl(u\blackdiamond \alpha(h,{}^hl)\Yleft u\cdot d\bigr)- m\blackdiamond \bigl(l\blackdiamond f(h,{}^hl) \Yleft m\bigr) \Yleft t\cdot d\\
&+m\blackdiamond\bigl(u\blackdiamond \alpha(h,{}^h l)\Yleft m\bigr) \Yleft t\cdot d +t\blackdiamond \alpha(u,{}^um)\Yleft t\cdot d+v\blackdiamond f(h,{}^h l) \Yleft t\cdot d -t\blackdiamond \alpha({}^hl,{}^um)\Yleft t\cdot d.
\end{align*}
We claim that the sum of the three terms which have $f$ vanishes. In fact, since
$$
v\blackdiamond f(h,{}^hl) \Yleft t\cdot d =m\blackdiamond\bigl(l\blackdiamond f(h,{}^h l)\bigr) \Yleft t\cdot d,
$$
in order to check this, we must prove that
$$
m\blackdiamond\bigl(y\Yleft u\cdot d\bigr)+ m\blackdiamond (y \Yleft m) \Yleft t\cdot d - (m\blackdiamond y) \Yleft t\cdot d = 0,
$$
where $y\coloneqq l\blackdiamond f(h,{}^h l)$. But this is true by~\eqref{triangulo compatible con times} with $h$ replaced by $m$ and $l$ replaced by $u\cdot d$ (note that $m\cdot (u\cdot d)=t\cdot d$). Hence~\eqref{asociativo equivalente} becomes
\begin{equation}\label{asociativo equivalente 3}
\begin{aligned}
t\blackdiamond \alpha(h,{}^h v)\Yleft t\cdot d & = m\blackdiamond\bigl(u\blackdiamond \alpha(h,{}^hl)\Yleft u\cdot d\bigr) + m\blackdiamond\bigl(u\blackdiamond \alpha(h,{}^h l)\Yleft m\bigr) \Yleft t\cdot d\\
&+t\blackdiamond \alpha(u,{}^um)\Yleft t\cdot d-t\blackdiamond \alpha({}^hl,{}^um)\Yleft t\cdot d.
\end{aligned}
\end{equation}
Using again~\eqref{triangulo compatible con times} with $h$ replaced by $m$, $l$ replaced by $u\cdot d$ and $y$ replaced by $u\blackdiamond \alpha(h,{}^hl)$, we can replace the first two terms on the right hand side of~\eqref{asociativo equivalente 3} by $m \blackdiamond (u\blackdiamond \alpha(h,{}^hl)) \Yleft t\cdot d=t\blackdiamond \alpha(h,{}^h l) \Yleft t\cdot d$. So, we are reduce to prove that
\begin{equation*}
t\blackdiamond \alpha(h,{}^hl) \Yleft t\cdot d + t\blackdiamond \alpha(u,{}^um)\Yleft t\cdot d - t \blackdiamond \alpha({}^hl,{}^um)\Yleft t\cdot d -t\blackdiamond \alpha(h,{}^hv)\Yleft t\cdot d = 0.
\end{equation*}
Since $\Yleft$ is linear in the first argument and $\blackdiamond$ is linear in the second argument, for this it suffices to prove
$$
0 = \alpha({}^h l,{}^um) -\alpha(u,{}^um)+\alpha(h,{}^hv)-\alpha(h,{}^hl).
$$
But, since $u=h+{}^u l$ and ${}^h v={}^h l+{}^u m$, this equality is true by~\eqref{beta es cociclo} with $l$ replaced by ${}^hl$ and $m$ replaced by ${}^um$.
\end{proof}

\begin{corollary}\label{coro asociativo} The map $f\colon \M_Y\times H\to I$, introduced in Definition~\ref{def de f}, factorizes through $\Se_Y\times H$, where $\Se_Y$ is the free semigroup generated by $Y$.
\end{corollary}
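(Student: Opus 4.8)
The statement asks that $f$ be constant on the fibres of the canonical projection $\pi\colon \M_Y\to \Se_Y$ in its first variable. Recall that $\Se_Y$ is $\M_Y$ modulo the smallest congruence $\sim$ containing all pairs $\bigl((h\times l)\times m,\,h\times(l\times m)\bigr)$, so the factorization through $\Se_Y\times H$ means exactly that $f(p,d)=f(q,d)$ for all $d\in H$ whenever $\pi(p)=\pi(q)$. My plan is to introduce the relation $\approx$ on $\M_Y$ defined by $p\approx q$ if and only if $\pi_Y(p)=\pi_Y(q)$ in $H$ \emph{and} $f(p,d)=f(q,d)$ for all $d\in H$, and then to prove that $\approx$ is a congruence of the magma $(\M_Y,\times)$ that contains every associativity pair. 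Granting this, minimality of $\sim$ gives $\sim\subseteq\approx$, whence $\pi(p)=\pi(q)$ forces $f(p,-)=f(q,-)$, which is the desired conclusion.

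That $\approx$ contains the associativity pairs will be immediate: equality of the $H$-components holds because $(H,\times)$ is a group, hence associative, and equality of the $f$-components is precisely Proposition~\ref{asociativo}. I keep the clause $\pi_Y(p)=\pi_Y(q)$ inside the definition of $\approx$ because $\pi_Y$ factors through $\Se_Y$ (again since $(H,\times)$ is associative), so any two terms identified in $\Se_Y$ automatically have equal image in $H$; this is the piece of data that will drive the compatibility step.

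The heart of the proof is the compatibility of $\approx$ with $\times$: assuming $p\approx q$, I must show $p\times r\approx q\times r$ and $r\times p\approx r\times q$ for every $r\in\M_Y$. The $H$-components are clear since $\pi_Y$ is a magma homomorphism. For the $f$-components I would expand $f(p\times r,m)$ and $f(r\times p,m)$ via the formula of Remark~\ref{def de F(a,b,c)} and observe that each right-hand side depends on $p$ only through its image $\pi_Y(p)\in H$ and through the function $f(p,-)$: the quantities $p\cdot m$, ${}^pr$, $p\times r$, $\alpha(p,{}^pr)$ and the operator $p\blackdiamond(-)$ are all computed entirely in $H$, while the genuine occurrences of $f$ are $f(p,m)$ and $f(p,{}^pr)$, respectively $f(p,r\cdot m)$, evaluated at second arguments that again depend on $p$ only via $\pi_Y(p)$. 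Since $p\approx q$ makes both $\pi_Y(p)$ and the whole function $f(p,-)$ agree with those of $q$, every summand is unchanged under $p\mapsto q$, giving $f(p\times r,-)=f(q\times r,-)$ and $f(r\times p,-)=f(r\times q,-)$. I expect this careful bookkeeping to be the main obstacle, since one must verify that no occurrence of $p$ survives except through $\pi_Y(p)$ and $f(p,-)$; this is exactly why the $H$-component is built into $\approx$, and iterating compatibility is what upgrades the single top-level reassociation of Proposition~\ref{asociativo} to reassociation inside an arbitrary context. Once $\approx$ is shown to be a congruence containing associativity, the inclusion $\sim\subseteq\approx$ and the factorization follow formally.
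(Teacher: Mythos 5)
Your proof is correct and takes essentially the same route as the paper: the paper states this corollary without a separate proof, as an immediate consequence of Proposition~\ref{asociativo}, and your congruence argument is exactly that deduction made explicit. Your key observation --- that by Remark~\ref{def de F(a,b,c)} the value $f(h\times l,-)$ depends on $h$ and $l$ only through $\pi_Y(h)$, $\pi_Y(l)$ and the functions $f(h,-)$, $f(l,-)$ --- is what legitimately upgrades the top-level reassociation identity of Proposition~\ref{asociativo} to invariance under the full associativity congruence (reassociations inside arbitrary subterms), a closure-under-contexts step the paper leaves implicit.
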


\begin{notation} We will also write $f\colon \Se_Y\times H\to I$ to denote the map induced by $f\colon \M_Y\times H\to I$.
\end{notation}

Let $\alpha = \alpha_{\gamma_1,\dots,\gamma_n}$ be as at the beginning of this section and let $f_{11}\colon Y\times Z\to I$ be a map such that
$$
d_i f_{11}(a_j,e_i) +\sum_{k=1}^{d_i-1}\alpha(a_j\cdot e_i,k (a_j\cdot e_i))-a_j\blackdiamond \gamma_i = 0\quad\text{for $1\le i\le n$ and $1\le j\le s$,}
$$
and let $f\colon \Se_Y\times H\to I$ be the map obtained in Corollary~\ref{coro asociativo}.

\begin{proposition}\label{factorizacion} The map $f$ factorizes through $H\times H$ if and only if
\begin{align}
&F_1(a_j, a_k, e_i) = F_1(a_k, a_j, e_i) &&\! \text{for all $i,j,k$}\label{conmutatividad}\\
\shortintertext{and}
&\sum_{\ell=0}^{r_j-1}\! a_j^{\times (r_j-1-\ell)}\blackdiamond f_1(a_j,a_j^{\times \ell}\cdot e_i)+\!\sum_{\ell=1}^{r_j-1}\! a_j^{\times \ell}\blackdiamond f_1(a_j,\bar{a}_{j\ell})\Yleft e_i - \! \sum_{\ell=1}^{r_j-1}\! a_j^{\times (\ell+1)}\blackdiamond \alpha(a_j,\bar{a}_{j\ell})\Yleft e_i = 0, \!&&\text{for all $i,j$,}\label{condicion en times con e_i}
\end{align}
where $\bar{a}_{j\ell}$ is as in Theorem~\ref{ppal1}, $F_1$ is as in Definition~\ref{def de f} and $f_1$ is as in Notation~\ref{factorizacion de f_1}.
\end{proposition}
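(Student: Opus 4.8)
The guiding idea is that the two displayed conditions are exactly the relations presenting the abelian group $(H,\times)=\langle a_1\rangle\oplus\cdots\oplus\langle a_s\rangle$ on the generators $a_1,\dots,a_s$: \eqref{conmutatividad} records commutativity $a_j\times a_k=a_k\times a_j$, and \eqref{condicion en times con e_i} records the order relations $a_j^{\times r_j}=0$. Since $\Se_Y$ surjects onto $(H,\times)$ and, by Corollary~\ref{coro asociativo}, $f$ is already defined on $\Se_Y\times H$, the statement ``$f$ factorizes through $H\times H$'' means precisely that $f(w,-)=f(w',-)$ whenever $w,w'\in\Se_Y$ have the same image in $(H,\times)$. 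Two structural facts make this tractable. First, as $f$ is $\alpha$-quasi linear in the second variable (Proposition~\ref{quasi linear invariate por times}), Proposition~\ref{dependencia de f} applies with the first slot in $\Se_Y$, and \eqref{quasilineal 1} and~\eqref{quasilineal 2} recover $f(w,l)$ from the $\gamma_i$'s and the $f(w,e_i)$'s using $w$ only through its $H$-image; hence, for $w,w'$ of equal image, $f(w,-)=f(w',-)$ iff $f(w,e_i)=f(w',e_i)$ for every $i$. Second, because $f$ obeys the recursion of Remark~\ref{def de F(a,b,c)}, which specializes on the powers of one generator to the recursion used in Proposition~\ref{delta nos da times} (one checks ${}^{a_j}(a_j^{\times\ell})=\bar a_{j\ell}$), formula~\eqref{f de times} holds for the constructed $f$; evaluating it at $h=a_j$, $k=r_j$, $m=e_i$ and using $a_j^{\times r_j}\cdot e_i=e_i$ identifies $f(a_j^{\times r_j},e_i)$ with the left-hand side of~\eqref{condicion en times con e_i}. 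Likewise, Definition~\ref{def de f} gives $f(a_j\times a_k,e_i)=F_1(a_j,a_k,e_i)$.

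For the ``only if'' part, assume $f$ factorizes through $H\times H$. Since $a_j\times a_k=a_k\times a_j$ in $H$, the identifications above give $F_1(a_j,a_k,e_i)=f(a_j\times a_k,e_i)=f(a_k\times a_j,e_i)=F_1(a_k,a_j,e_i)$, which is~\eqref{conmutatividad}; and since $a_j^{\times r_j}=0$ in $H$, the left-hand side of~\eqref{condicion en times con e_i} equals $f(a_j^{\times r_j},e_i)=f(0,e_i)=0$, which is~\eqref{condicion en times con e_i}.

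For the ``if'' part I would first promote the two hypotheses to moves valid inside arbitrary words. From~\eqref{condicion en times con e_i} we get $f(a_j^{\times r_j},e_i)=0$; as $a_j^{\times r_j}$ has $H$-image $0$, the second-variable reduction forces $f(a_j^{\times r_j},l)=0$ for all $l\in H$ (the $\alpha$-corrections in~\eqref{quasilineal 1} and~\eqref{quasilineal 2} are values $\alpha(e_i,\ell e_i)$ with $\ell<d_i-1$ or values of $\alpha$ on elements of disjoint support, all of which vanish). Feeding $h=a_j^{\times r_j}$ into the recursion of Remark~\ref{def de F(a,b,c)}, and using ${}^{0}l=l$ and $\alpha(0,l)=0$, yields the order move $f(a_j^{\times r_j}\times l,m)=f(l,m)$ for all $l\in\Se_Y$ and $m\in H$. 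From~\eqref{conmutatividad} and the second-variable reduction one obtains likewise the commutativity move $f(a_j\times a_k,m)=f(a_k\times a_j,m)$ for all $m$. It then remains to propagate these moves: I would show, by induction on word length and using associativity (Proposition~\ref{asociativo}) together with the recursion of Remark~\ref{def de F(a,b,c)} to expose an adjacent pair of generators or a leading block $a_j^{\times r_j}$, that $w\mapsto f(w,e_i)$ is invariant under permuting the letters of $w$ and under deleting an $a_j^{\times r_j}$ block. Once this is in hand, any two words with the same image in $(H,\times)$ can be transported to a common normal form $a_1^{\times k_1}\times\cdots\times a_s^{\times k_s}$ with $0\le k_j<r_j$, so $f(w,e_i)=f(w',e_i)$, and the second-variable reduction upgrades this to $f(w,-)=f(w',-)$, i.e. $f$ factorizes through $H\times H$.

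The crux is the propagation step. The recursion of Remark~\ref{def de F(a,b,c)} rewrites $f(h\times l,m)$ in terms of $f$ evaluated at the shifted points $h\cdot m$, ${}^hl$ and $(h\times l)\cdot m$, so transposing or deleting letters in the interior of a word does not preserve the second argument; the induction must therefore thread the recursion through the second-variable reduction while controlling the correction terms assembled from $\alpha$, $\blackdiamond$ and $\Yleft$, exactly the bookkeeping already carried out in the proof of Proposition~\ref{asociativo}. Essentially all of the computation resides here; by contrast, once commutativity and periodicity of the word map are available, the reduction to normal form and the necessity direction are immediate.
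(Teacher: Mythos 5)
Your proposal is correct and is essentially the paper's own proof: both arguments split the statement into (i) condition~\eqref{conmutatividad} being equivalent to factorization through $\AS_Y\times H$, (ii) the power formula (proved for the constructed $f$ by the same induction as Proposition~\ref{delta nos da times}) identifying condition~\eqref{condicion en times con e_i} with $f(a_j^{\times r_j},e_i)=0$, and (iii) $\alpha$-quasi-linearity in the second variable together with $\pi_Y(a_j^{\times r_j})=0$ to upgrade this to $f(a_j^{\times r_j},m)=0$ for all $m\in H$, which is exactly what descent from $\AS_Y\times H$ to $H\times H$ requires.

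The only substantive difference is your assessment of the propagation step, which you call the crux and estimate at the level of the bookkeeping in Proposition~\ref{asociativo}; the paper dismisses that same step with ``Clearly'', and the paper is right. In the recursion of Remark~\ref{def de F(a,b,c)}, every ingredient other than the $f$-values themselves --- $h\cdot m$, ${}^hl$, $(h\times l)\cdot m$, the $\blackdiamond$- and $\Yleft$-coefficients, and the $\alpha$-corrections --- depends on the word arguments only through their images in $H$. Hence, when you compare $f(w,m)$ and $f(w',m)$ for two words with the same image, all correction terms coincide verbatim, and peeling one letter (or one leading pair, or one leading block $a_j^{\times r_j}$) off the left reduces the comparison to $f$-values of strictly shorter words with equal image evaluated at the \emph{same} second argument; so the shift of the second argument that worried you is the same shift on both sides, and your induction on word length closes immediately, with the base cases supplied exactly by your commutativity and order moves.
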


\begin{proof} Clearly,~\eqref{conmutatividad} holds if and only if the $f$ factorizes through $\AS_Y\times H$. We claim that, for all~$k\in \mathds{N}$, $h\in \AS_Y$ and~$m\in H$,
\begin{equation}\label{f de times 2}
f(h^{\times k},m)=\sum_{\ell=0}^{k-1} h^{\times (k-\ell-1)}\blackdiamond f(h,h^{\times \ell}\cdot m)+\sum_{\ell=1}^{k-1} h^{\times \ell}\blackdiamond f(h,)\Yleft h^{\times k}\cdot m -\sum_{\ell=1}^{k-1} h^{\times (\ell+1)}\blackdiamond \alpha(h,\bar{h}_{\ell})\Yleft h^{\times k}\cdot m,
\end{equation}
where $\bar{h}_{\ell}\coloneqq h^{\times (\ell+1)} - h={}^h(h^{\times \ell})\in H$. In fact, by Remark~\ref{def de F(a,b,c)}, the inductive hypothesis, and \eqref{accion tines con punto}, we have
\begin{align*}
f(h^{\times (k+1)},m) &= f(h^{\times k},h\cdot m)+ h^{\times k}\blackdiamond f(h,m)+  h^{\times k}\blackdiamond f(h,\bar{h}_k)\Yleft h^{\times(k+1)}\cdot m -h^{\times(k+1)}\blackdiamond \alpha(h,\bar{h}_k)\Yleft h^{\times(k+1)}\cdot m\\
%
%
&= \sum_{\ell=1}^k h^{\times (k-\ell)}\blackdiamond f(h,h^{\times \ell}\!\cdot m)+\sum_{\ell=1}^{k-1} h^{\times \ell}\blackdiamond f(h,\bar{h}_{\ell})\Yleft h^{\times (k+1)}\!\cdot m -\sum_{\ell=1}^{k-1} h^{\times (\ell+1)}\blackdiamond \alpha(h,\bar{h}_{\ell}) \Yleft h^{\times (k+1)}\!\cdot m\\
& + h^{\times k}\blackdiamond f(h,m)+ h^{\times k}\blackdiamond f(h,\bar{h}_k)\Yleft h^{\times(k+1)}\cdot m-h^{\times (k+1)}\blackdiamond \alpha(h,\bar{h}_k)\Yleft h^{\times(k+1)}\cdot m\\
&= \sum_{\ell=0}^k h^{\times (k-\ell)}\blackdiamond f(h,h^{\times \ell}\cdot m)+\sum_{\ell=1}^k h^{\times \ell}\blackdiamond f(h,\bar{h}_{\ell}) \Yleft h^{\times (k+1)}\cdot m -\sum_{\ell=1}^k h^{\times(\ell+1)} \alpha(h,\bar{h}_{\ell})\Yleft h^{\times (k+1)}\cdot m,
\end{align*}
which finishes the proof of the claim. Hence, condition~\eqref{condicion en times con e_i} holds if and only if $f(a_j^{\times r_j},e_i)=0$, for all $i$ and $j$ (use that $a_j^{\times r_j}\cdot e_i = e_i$). This proves that condition~\eqref{condicion en times con e_i} is necessary. In order to prove that it is sufficient, we must show that~\eqref{condicion en times con e_i} implies that $f(a_j^{\times r_j},m)=0$, for all $m\in H$. But this is true, since the map $f$ is $\alpha$-quasi linear in the second variable (by Proposition~\ref{quasi linear invariate por times}), and $\pi_Y(a_j^{\times r_j})=0$ (where $\pi_Y\colon AS_Y\to H$ is the canonical projection).
\end{proof}

\begin{notation} We will also write $f\colon H\times H\to I$ to denote the map induced by $f\colon \M_Y\times H\to I$.
\end{notation}

\begin{theorem}\label{teorema central} If we are under the conditions of Proposition~\ref{factorizacion}, then $(\alpha,-f)$ is a $2$-cocycle of $(\wh{C}_N^*(H,I),\partial+D)$.
\end{theorem}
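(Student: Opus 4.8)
The plan is to verify directly the three conditions that characterize a $2$-cocycle of the total complex. As recorded just before~\eqref{beta quasi linear}, the pair $(\alpha,-f)$ is a $2$-cocycle of $(\wh{C}_N^*(H,I),\partial+D)$ precisely when $\partial_{\mathrm{v}}^{03}(\alpha)=0$, when $\partial_{\mathrm{h}}^{12}(\alpha)=\partial_{\mathrm{v}}^{12}(f)$ (equivalently, $f$ is $\alpha$-quasi linear in the second variable, i.e.\ \eqref{beta quasi linear} holds), and when $(\partial_{\mathrm{h}}^{21}+D_{11}^{21})(f)=D_{02}^{21}(\alpha)$ (equivalently, \eqref{delta horizontal es cero} holds). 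I would treat these three in turn, the point being that essentially all the analytic content has already been established, so that what remains is to assemble it and to descend the relevant identities from the free structures to $H$ itself.

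The first condition is immediate: by Remark~\ref{nota1} (via Lemma~\ref{cociclos verticales}) the symmetric cochain $\alpha=\alpha_{\gamma_1,\dots,\gamma_n}$ lies in $\ker(\partial_{\mathrm{v}}^{03})$. For the second condition, Proposition~\ref{quasi linear invariate por times} shows that $f\colon\M_Y\times H\to I$ is $\alpha$-quasi linear in the second variable. Under the hypotheses of Proposition~\ref{factorizacion}, namely~\eqref{conmutatividad} and~\eqref{condicion en times con e_i}, this map factorizes through $H\times H$; since the canonical magma morphism $\pi_Y\colon\M_Y\to H$ (for $\times$) is surjective because $Y=\{a_1,\dots,a_s\}$ generates $(H,\times)$, the $\alpha$-quasi linearity descends to the induced map $f\colon H\times H\to I$, giving~\eqref{beta quasi linear}.

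For the third condition I would invoke Proposition~\ref{equivalencia cociclo con times}, which says that~\eqref{delta horizontal es cero} is equivalent to its $\times$-version, namely the identity displayed in Remark~\ref{def de F(a,b,c)}. But that identity is exactly the recursive defining relation of $f$ on $\M_Y\times H$, hence holds there by construction. Descending it to $H\times H$ is again justified by the surjectivity of $\pi_Y$ together with the factorization from Proposition~\ref{factorizacion}: both sides of the identity depend on the arguments in $\M_Y$ only through their images in $H$ under $\pi_Y$, so the relation passes to the induced map. Thus~\eqref{delta horizontal es cero} holds for $f\colon H\times H\to I$, completing the verification and hence the proof.

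The only points requiring care are bookkeeping ones, and I expect no genuine obstacle beyond them. First, one must check that the descent is legitimate on both sides of each identity; this is exactly what Corollary~\ref{coro asociativo} and Proposition~\ref{factorizacion} guarantee, since they ensure $f$ is well defined on $H\times H$ while $\pi_Y$ is a magma homomorphism. Second, the normalization built into $\wh{C}_N^{11}$ must hold: that $f(h,0)=0$ follows from~\eqref{beta quasi linear} with $l=m=0$ (using $\alpha(0,0)=0$ and $h\cdot 0=0$), and that $f(0,m)=0$ follows because~\eqref{condicion en times con e_i} forces $f(a_j^{\times r_j},e_i)=0$, whence $\alpha$-quasi linearity yields $f(a_j^{\times r_j},m)=f(0,m)=0$. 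The substantive computations, namely the associativity of $f$ in Proposition~\ref{asociativo} and the $\alpha$-quasi linearity in Proposition~\ref{quasi linear invariate por times}, being already in hand, the theorem is a clean assembly of these results.
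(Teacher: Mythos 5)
Your proof is correct and follows essentially the same route as the paper's: reduce to the three conditions $\partial_{\mathrm{v}}^{03}(\alpha)=0$, \eqref{beta quasi linear} and \eqref{delta horizontal es cero}, then cite Proposition~\ref{quasi linear invariate por times} for the second and Remark~\ref{def de F(a,b,c)} together with Proposition~\ref{equivalencia cociclo con times} for the third. The additional bookkeeping you supply (the descent along $\pi_Y$ and the normalizations $f(h,0)=f(0,m)=0$) is left implicit in the paper but is handled exactly as you describe, via Corollary~\ref{coro asociativo} and Proposition~\ref{factorizacion}.
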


\begin{proof} Since $\partial_{\mathrm{v}}^{03}(\alpha) = 0$, we only must prove that $\partial_{\mathrm{h}}^{12}(\alpha)=\partial_{\mathrm{v}}^{12}(f)$ and $(\partial_{\mathrm{h}}^{21}+D_{11}^{21})(f)=D_{02}^{21}(\alpha)$. In other words, that the equalities~\eqref{beta quasi linear} and~\eqref{delta horizontal es cero} are satisfied. The first identity holds by Proposition~\ref{quasi linear invariate por times}; and the second one, by Remark~\ref{def de F(a,b,c)} and Proposition~\ref{equivalencia cociclo con times}.
\end{proof}

\begin{theorem}\label{cobordes Bne0} A $2$-cocycle $(\alpha,-f)$ of $(\wh{C}_N^*(H,I),\partial+D)$ is a $2$-coboundary if and only if there exist $t_1,\dots,t_n\in I$, such that for all $i,j$, we have
\begin{equation}\label{coborde actua B}
d_i t_i = \gamma_i\qquad \text{and}\qquad  f(a_j,e_i) = - \sum_{k=1}^n \Gamma_{ij}^k t_k + a_j\blackdiamond t_i+\sum_{k=1}^n \lambda_{kj} e_i\blackdiamond t_k \Yleft a_j\cdot e_i,
\end{equation}
where $a_j=\sum_{i=1}^n \lambda_{ij} e_i$ and $a_j\cdot e_i=\sum_{k=1}^n\Gamma_{ij}^k e_k$.
\end{theorem}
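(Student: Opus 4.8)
The plan is to unwind the coboundary condition through the degree-one part of the total complex and then reduce the converse to the uniqueness furnished by Theorem~\ref{ppal1}. Recall that a $2$-cochain $(\alpha,-f)\in\wh{C}_N^{02}\oplus\wh{C}_N^{11}$ is a $2$-coboundary exactly when there exists $t\in\wh{C}_N^{01}$ with $\partial_{\mathrm{v}}^{02}(t)=\alpha$ and $(\partial_{\mathrm{h}}^{11}+D_{01}^{11})(t)=-f$; writing out the two differentials, this means
\begin{equation*}
\alpha(h,l)=t(l)-t(h+l)+t(h)\quad\text{and}\quad f(h,l)=-t(h\cdot l)+h\blackdiamond t(l)+l\blackdiamond t(h)\Yleft h\cdot l,
\end{equation*}
for all $h,l\in H$. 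Throughout I would set $t_i\coloneqq t(e_i)$.

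For the direct implication I would start from such a $t$ and extract the two stated conditions. Restricting the first identity to the $i$-th cyclic factor and inducting on $\ell$ gives $t(\ell e_i)=\ell t_i$ for $0\le\ell<d_i$, while the case $\ell=d_i-1$, together with $\alpha_i(\gamma_i)(e_i,(d_i-1)e_i)=\gamma_i$ from Remark~\ref{nota1}, yields $d_i t_i=\gamma_i$. The key observation is that $t$ is in fact linear on the fundamental box: since $\alpha=\alpha_{\gamma_1,\dots,\gamma_n}$ is a sum of the diagonal cocycles $\alpha_i(\gamma_i)$, the value $\alpha(h,l)$ vanishes whenever $h$ and $l$ have disjoint coordinate support, so building up $\sum_k c_k e_k$ one coordinate at a time forces $t\!\left(\sum_k c_k e_k\right)=\sum_k c_k t_k$ for all $0\le c_k<d_k$. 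Applying this to $a_j=\sum_i\lambda_{ij}e_i$ and to $a_j\cdot e_i=\sum_k\Gamma_{ij}^k e_k$, whose coordinates lie in the fundamental box, and substituting into the formula for $f(a_j,e_i)$, the linearity of $\blackdiamond$ in the second variable and the bilinearity of $\Yleft$ (condition~\eqref{eqq 2}) produce exactly~\eqref{coborde actua B}.

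For the converse I would run the construction backwards. Given $t_1,\dots,t_n$ satisfying~\eqref{coborde actua B}, I define $t_0\in\wh{C}_N^{01}$ by $t_0\!\left(\sum_k c_k e_k\right)\coloneqq\sum_k c_k t_k$ for $0\le c_k<d_k$. A direct computation using $d_i t_i=\gamma_i$ shows $\partial_{\mathrm{v}}^{02}(t_0)=\alpha$, so $d(t_0)=(\alpha,-f_0)$ is a $2$-coboundary, where $f_0(h,l)\coloneqq -t_0(h\cdot l)+h\blackdiamond t_0(l)+l\blackdiamond t_0(h)\Yleft h\cdot l$; in particular $(\alpha,-f_0)$ is a $2$-cocycle. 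Evaluating $f_0$ at $(a_j,e_i)$ by the same box-linearity computation as above reproduces precisely the right-hand side of~\eqref{coborde actua B}, so the hypothesis gives $f_0(a_j,e_i)=f(a_j,e_i)$ for all $i,j$. Since $(\alpha,-f)$ and $(\alpha,-f_0)$ are both $2$-cocycles sharing the same $\gamma_i$'s and the same values at the pairs $(a_j,e_i)$, Theorem~\ref{ppal1} forces $f=f_0$, whence $(\alpha,-f)=d(t_0)$ is a $2$-coboundary.

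The step I expect to demand the most care is the box-linearity of $t$: one must verify cleanly that the cross-coordinate values of $\alpha_{\gamma_1,\dots,\gamma_n}$ vanish and that the coordinates of $a_j$ and of $a_j\cdot e_i$ genuinely fall in the ranges $0\le\,\cdot\,<d_k$, so that $t(a_j)=\sum_k\lambda_{kj}t_k$ and $t(a_j\cdot e_i)=\sum_k\Gamma_{ij}^k t_k$ hold on the nose. Everything else is bookkeeping with the linearity of $\blackdiamond$ and $\Yleft$, and the appeal to the uniqueness in Theorem~\ref{ppal1} is what makes the converse painless, since it spares one from constructing the coboundary by hand beyond the single map $t_0$.
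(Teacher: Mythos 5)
Your proof is correct and follows essentially the same route as the paper's: both unwind the coboundary condition for a $1$-cochain $t$, show that $\partial_{\mathrm{v}}^{02}(t)=\alpha$ forces $d_it(e_i)=\gamma_i$ together with linearity of $t$ on the fundamental box (because the cross-coordinate values of $\alpha_{\gamma_1,\dots,\gamma_n}$ vanish), compute the degree-one differential at the pairs $(a_j,e_i)$, and invoke Theorem~\ref{ppal1} to reduce the full identity $(\partial_{\mathrm{h}}^{11}+D_{01}^{11})(t)=-f$ to agreement at those pairs. The only cosmetic difference is that you split the argument into two explicit implications and introduce the auxiliary cocycle $(\alpha,-f_0)$, whereas the paper phrases the same content as a single chain of equivalences.
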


\begin{proof} By definition, $(\alpha,-f)$ is a $2$-coboundary if and only if there exist $t\in \wh{C}^{01}_N(H,I)$ such that
\begin{equation}\label{cond0}
\partial^{02}_{\mathrm{v}}(t)=\alpha\qquad\text{and}\qquad (\partial_{\mathrm{h}}^{11}+D_{01}^{11})(t) = -f.
\end{equation}
We claim that $\partial^{02}_{\mathrm{v}}(t)=\alpha$ if and only if
\begin{equation}\label{t debe cumplir}
\gamma_i=d_i t(e_i) \qquad\text{and}\qquad  t\left(\sum_{i=1}^n\lambda_i e_i\right)=\sum_{i=1}^n \lambda_i t(e_i)\quad\text{for $0\le \lambda_i < d_i$.}
\end{equation}
Assume first that $\partial^{02}_{\mathrm{v}}(t)=\alpha$. Then
\begin{align*}
& t(e_i)-t(k e_i)+t((k-1)e_i)=\partial^{02}_{\mathrm{v}}(t)(e_i,(k-1) e_i)=\alpha(e_i,(k-1) e_i)=0 &&\text{for $0<k<d_i$}\\
\shortintertext{and}
& t(e_i)-t(d_i e_i)+t((d_i-1)e_i)=\partial^{02}_{\mathrm{v}}(t)(e_i,(d_i-1) e_i)=\alpha(e_i,(d_i-1) e_i)=\gamma_i.
\end{align*}
Hence, by an inductive argument, $t(ke_i)=kt(e_i)$, for $k<d_i$, and so
$$
d_it(e_i) = t(d_ie_i) +\gamma_i = \gamma_i,
$$
since $t(d_ie_i)= t(0) = 0$. Again an inductive argument using that
$$
t(\lambda_l e_l)-t\left(\sum_{i=1}^{l}\lambda_ie_i\right)+t\left(\sum_{i=1}^{l-1}\lambda_ie_i\right)= \partial^{02}_{\mathrm{v}}(t)\left(\lambda_l e_l,\sum_{i=1}^{l-1}\lambda_ie_i\right) =\alpha\left(\lambda_l e_l,\sum_{i=1}^{l-1}\lambda_ie_i\right)=0,
$$
gives us
$$
t\left(\sum_{i=1}^n\lambda_ie_i\right)=\sum_{i=1}^n t(\lambda_ie_i)=\sum_{i=1}^n \lambda_it(e_i),
$$
which finishes the proof of~\eqref{t debe cumplir}. Conversely, if the map $t$ satisfies~\eqref{t debe cumplir}, then
\begin{equation}\label{cond1}
\begin{aligned}
\partial^{02}_{\mathrm{v}}(t)\left(\sum_{i=1}^n\lambda_ie_i,\sum_{i=1}^n\lambda'_ie_i\right) & = t\left(\sum_{i=1}^n\lambda'_ie_i\right) - t\left(\sum_{i=1}^n(\lambda_i+\lambda'_i)e_i\right) + t\left(\sum_{i=1}^n\lambda_ie_i\right)\\
& = \sum_{i=1}^n\lambda'_it(e_i) - \sum_{i=1}^n r_{d_i}(\lambda_i+\lambda'_i) t(e_i) + \sum_{i=1}^n\lambda_it(e_i)\\
& = \alpha\left(\sum_{i=1}^n\lambda_ie_i,\sum_{i=1}^n\lambda'_ie_i\right).
\end{aligned}
\end{equation}
Thus, the claim is true. Now, assuming that $t$ satisfies~\eqref{t debe cumplir}, we compute
\begin{align*}
(\partial_{\mathrm{h}}^{11}+D_{01}^{11})(t)(a_j,e_i) & = t(a_j\cdot e_i)-a_j\blackdiamond t(e_i)-e_i\blackdiamond t(a_j)\Yleft a_j\cdot e_i\\
& = t\left(\sum_{k=1}^n\Gamma_{ij}^k e_k\right) - a_j\blackdiamond t(e_i) - e_i \blackdiamond t\left(\sum_{k=1}^n\lambda_{kj} e_k\right)\Yleft a_j\cdot e_i\\
& = \sum_{k=1}^n\Gamma_{ij}^k t(e_k) - a_j\blackdiamond t(e_i) - \sum_{k=1}^n \lambda_{kj} e_i\blackdiamond t(e_k)\Yleft a_j\cdot e_i.
\end{align*}
By Theorem~\ref{ppal1}, this and~\eqref{cond1} imply that~\eqref{cond0} hold if and only if~\eqref{coborde actua B} is satisfied with $t_k\coloneqq t(e_k)$.
\end{proof}

\subsubsection[Computing the cohomology]{Computing the cohomology}

Given $((\gamma_i),(\mathfrak{f}_{ji}))\in I^n\oplus I^{sn}$, set $f_{11}(a_j,e_i)\coloneqq \mathfrak{f}_{ji}$ and define $f_1\colon Y\times \AS_Z\to I$, proceeding as in Definition~\ref{def de f_{1r}}, with $\alpha\coloneqq \alpha_{\gamma_1,\dots,\gamma_n}$ (alternatively, you can use formulas~\eqref{quasilinealij 1} and~\eqref{quasilinealij 2}). Then, define the linear maps
$$
T_1,T_2\colon I^n\oplus I^{sn}\to I^{ns},\quad T_3\colon I^n\oplus I^{sn}\to I^{n\binom{s}{2}}\quad\text{and}\quad S\colon I^n\to I^n\oplus I^{sn},
$$
by
\begin{align*}
& T_1((\gamma_i),(\mathfrak{f}_{ji}))\coloneqq \left(d_i \mathfrak{f}_{ji}+\sum_{k=1}^{d_i-1}\alpha(a_j\cdot e_i,k (a_j\cdot e_i))-a_j\blackdiamond \gamma_i\right)_{ij},\\
& T_2((\gamma_i),(\mathfrak{f}_{ji}))\coloneqq \left(\sum_{\ell=0}^{r_j-1} a_j^{\times (r_j-1-\ell)}\blackdiamond f_1(a_j,a_j^{\times \ell}\cdot e_i)+\sum_{\ell=1}^{r_j-1} a_j^{\times \ell}\blackdiamond f_1(a_j,\bar{a}_{j\ell})\Yleft e_i -\sum_{\ell=1}^{r_j-1} a_j^{\times (\ell+1)}\blackdiamond \alpha(a_j,\bar{a}_{j\ell})\Yleft e_i\right)_{ij},\\
& T_3((\gamma_i),(\mathfrak{f}_{ij}))\coloneqq (F_1(a_j,a_i,e_k)-F_1(a_i,a_j,e_k))_{i<j,k},
\shortintertext{and}
& S(t_1,\dots,t_n)\coloneqq \left((d_i t_i)_i,\left(- \sum_{k=1}^n \Gamma_{ij}^k t_k +a_j\blackdiamond  t_i+\sum_{k=1}^n \lambda_{kj} (e_i\blackdiamond t_k)\Yleft a_j\cdot e_i \right)_{ij}\right),
\end{align*}
where $a_j=\sum_{i=1}^n \lambda_{ij} e_i$, $a_j\cdot e_i=\sum_{k=1}^n\Gamma_{ij}^k e_k$ and $F_1$ is as in Definition~\ref{def de f}. Let
\begin{equation*}
T\colon \ker(T_1)\cap \ker(T_2)\cap \ker(T_3) \to \wh{C}^{02}_N(H,I)\oplus \wh{C}^{11}_N(H,I)
\end{equation*}
be the linear map given by $T((\gamma_i),(\mathfrak{f}_{ji})) \coloneqq \bigl(\alpha_{\gamma_1,\dots,\gamma_n},-f_{(\mathfrak{f}_{ji}),(\gamma_i)}\bigr)$, where $f_{(\mathfrak{f}_{ji}),(\gamma_i)}\colon H\times H\to I$ is the map constructed from $f_1$, using the method introduced in Definition~\ref{def de f}.

\begin{corollary}\label{calculo de la homo B'} The map $T$ induces an isomorphism
\begin{equation*}
\ov{T}\colon \frac{\ker(T_1)\cap \ker(T_2)\cap \ker(T_3)}{\ima(S)}\longrightarrow \Ho^2_{\blackdiamond,\Yleft}(H,I).
\end{equation*}
\end{corollary}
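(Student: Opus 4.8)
The plan is to recognize $T$ as a concrete parametrization of the $2$-cocycles of the form $(\alpha_{\gamma_1,\dots,\gamma_n},-f)$ and $S$ as the matching parametrization of the coboundaries, and then to check that $\overline{T}$ is a well-defined bijection. First I would record that the three kernel conditions are precisely the hypotheses under which the construction yields a genuine cocycle: $\ker(T_1)$ is condition~\eqref{basica}, which by Corollary~\ref{coro casi dependencia de f} guarantees that $f_1$ factorizes through $Y\times H$, while $\ker(T_2)\cap\ker(T_3)$ is exactly conditions~\eqref{condicion en times con e_i} and~\eqref{conmutatividad} of Proposition~\ref{factorizacion}, which guarantee that the induced map $f=f_{(\mathfrak{f}_{ji}),(\gamma_i)}$ factorizes through $H\times H$. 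Under these conditions Theorem~\ref{teorema central} tells us that $(\alpha,-f)=T((\gamma_i),(\mathfrak{f}_{ji}))$ is a $2$-cocycle, so $T$ lands in the cocycles and, composed with the projection to $\Ho^2_{\blackdiamond,\Yleft}(H,I)$, defines a linear map on $\ker(T_1)\cap\ker(T_2)\cap\ker(T_3)$.

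Next I would identify $S$ with the parametrized coboundary map. Reading off the computation in the proof of Theorem~\ref{cobordes Bne0}, for $t\in\wh{C}^{01}_N(H,I)$ determined by $t(e_k)=t_k$ and satisfying~\eqref{t debe cumplir}, one has $\partial^{02}_{\mathrm v}(t)=\alpha_{\gamma_1,\dots,\gamma_n}$ with $\gamma_i=d_i t_i$, and $-(\partial_{\mathrm h}^{11}+D_{01}^{11})(t)(a_j,e_i)$ equals the second component of $S(t_1,\dots,t_n)$. Thus the coboundary $\bigl(\partial^{02}_{\mathrm v}(t),(\partial_{\mathrm h}^{11}+D_{01}^{11})(t)\bigr)$ is itself a cocycle of the shape $(\alpha,-f')$ with base data $S(t_1,\dots,t_n)$; being a cocycle, its $f'$ factorizes through $H\times H$, so Proposition~\ref{factorizacion} places $S(t_1,\dots,t_n)$ in $\ker(T_1)\cap\ker(T_2)\cap\ker(T_3)$. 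By the uniqueness part of Theorem~\ref{ppal1}, $f_{(\mathfrak{f}_{ji}),(\gamma_i)}=f'$, whence $T(S(t_1,\dots,t_n))$ equals this coboundary and vanishes in cohomology. This simultaneously shows that $\ima(S)$ is contained in the common kernel and that $\overline{T}$ is well defined.

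For injectivity, I would assume $T((\gamma_i),(\mathfrak{f}_{ji}))$ is a coboundary; Theorem~\ref{cobordes Bne0} then furnishes $t_1,\dots,t_n\in I$ satisfying~\eqref{coborde actua B}, and since $f(a_j,e_i)=\mathfrak{f}_{ji}$ by construction, \eqref{coborde actua B} is verbatim the equation $((\gamma_i),(\mathfrak{f}_{ji}))=S(t_1,\dots,t_n)$, so the class is zero in the quotient. For surjectivity, I would take an arbitrary class in $\Ho^2_{\blackdiamond,\Yleft}(H,I)$ and, via Corollary~\ref{basta tomar combinacion de estandar}, pick a representative $(\alpha_{\gamma_1,\dots,\gamma_n},-f)$. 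Setting $\mathfrak{f}_{ji}:=f(a_j,e_i)$ and using that this $f$ genuinely factorizes through $H\times H$, Proposition~\ref{factorizacion} forces $((\gamma_i),(\mathfrak{f}_{ji}))\in\ker(T_1)\cap\ker(T_2)\cap\ker(T_3)$, and the uniqueness in Theorem~\ref{ppal1} gives $f_{(\mathfrak{f}_{ji}),(\gamma_i)}=f$, so that $T((\gamma_i),(\mathfrak{f}_{ji}))=(\alpha,-f)$ represents the chosen class.

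The main obstacle is not a single computation but the careful bookkeeping required to match, on the nose, the three kernel conditions with the factorization hypotheses of Proposition~\ref{factorizacion}, and the explicit formula defining $S$ with the coboundary formula~\eqref{coborde actua B}. The one genuinely load-bearing ingredient is the uniqueness half of Theorem~\ref{ppal1}: it is what allows me to assert that the combinatorially constructed map $f_{(\mathfrak{f}_{ji}),(\gamma_i)}$ coincides with the $f$ of \emph{any} cocycle or coboundary sharing its base data, and hence to pass freely between the concrete parametrization $(T,S)$ and the abstract cohomology group.
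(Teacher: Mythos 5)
Your proposal is correct and takes essentially the same approach as the paper: the paper's own proof is a one-line appeal to Proposition~\ref{factorizacion} and Theorems~\ref{teorema central} and~\ref{cobordes Bne0}, and your argument fills in precisely that bookkeeping (well-definedness and the image landing in cocycles via Theorem~\ref{teorema central}, injectivity via Theorem~\ref{cobordes Bne0}, surjectivity via Corollary~\ref{basta tomar combinacion de estandar} combined with the uniqueness in Theorem~\ref{ppal1}). Your identification of the uniqueness half of Theorem~\ref{ppal1} as the load-bearing step is exactly right, since it is what lets the constructed $f_{(\mathfrak{f}_{ji}),(\gamma_i)}$ be matched with the $f$ of an arbitrary cocycle or coboundary sharing the same base data.
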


\begin{proof} This follows immediately from Corollary~\ref{coro casi dependencia de f}, Proposition~\ref{factorizacion} and Theorems~\ref{teorema central} and~\ref{cobordes Bne0}.
\end{proof}

\begin{remark}\label{T2, T3 y S cuando Ylef=0} If $\Yleft=0$, then the maps $T_2$,~$T_3$ and $S$ simplify to
\begin{align*}
& T_2((\gamma_i),(\mathfrak{f}_{ji}))\coloneqq \left(\sum_{\ell=0}^{r_j-1} a_j^{\times (r_j-1-\ell)}\blackdiamond f_1(a_j,a_j^{\times \ell}\cdot e_i) \right)_{ij},\\
& T_3((\gamma_i),(\mathfrak{f}_{ji}))\coloneqq \bigl(f_1(a_i,a_j\cdot e_k) + a_i\blackdiamond f_1(a_j,e_k)- f_1(a_j,a_i\cdot e_k) - a_j\blackdiamond f_1(a_i,e_k)\bigr)_{i<j,k}\\
\shortintertext{and}
& S(t_1,\dots,t_n)\coloneqq \left((d_i t_i)_i,\left(-\sum_{k=1}^n \Gamma_{ij}^k t_k +a_j\blackdiamond t_i\right)_{ij}\right).
\end{align*}
Moreover the recursive construction of the map $f$ becomes
$$
f_{r+1}(h,d) = \begin{cases} f_r(h,d) & \text{if $h\in \ov{Y}_r $,}\\ f_r(m,l\cdot d)+ m\blackdiamond f_r(l,d) & \text{if $h = l\times m$, with $l\in Y_u$, $m\in Y_v$ and $u+v = r+1$.} \end{cases}
$$
\end{remark}

\subsection[Computing the map \texorpdfstring{$f$}{f}]{Computing the map \texorpdfstring{$\mathbf{f}$}{f}}\label{subsection 2.3}

Although in Corollary~\ref{calculo de la homo B'}, we obtained a group isomorphic to $\Ho^2_{\blackdiamond,\Yleft}(H,I)$, for the explicit calculation of extensions, given $((\gamma_i),(\mathfrak{f}_{ji}))\in \ker(T_1)\cap \ker(T_2)\cap \ker(T_3)$, we need to compute $f\coloneqq f_{(\mathfrak{f}_{ji}),(\gamma_i)}$. In this subsection, we provide a method to carry out this task.

\smallskip

Let $(\alpha,-f) \in \wh{C}^{02}_N(H,I)\oplus \wh{C}^{11}_N(H,I)$, be a $2$-cocycle of$(\wh{C}_N^*(H,I),\partial+D)$, with $\alpha\coloneqq \alpha_{\gamma_1,\dots,\gamma_n}$. In Proposition~\ref{delta nos da times}, we obtained a formula that allows us to compute $f(a_j^{\times k},m)$ in terms of the restriction of $f$ to $\{a_j\}\times H$. Moreover, by identities~\eqref{quasilinealij 1} and~\eqref{quasilinealij 2}, this restriction is computed by the formula
\begin{equation}\label{zazaza}
f\left(a_j,\sum_{i=1}^n \lambda_i e_i\right)= \sum_{i=1}^n \lambda_i f(a_j,e_i)+\sum_{i=1}^n \sum_{\ell=1}^{\lambda_i-1}\alpha(a_j\cdot e_i,\ell(a_j\cdot e_i)) +\sum_{k=2}^n \alpha\left(\sum_{i=1}^{k-1}\lambda_i (a_j\cdot e_i), \lambda_k (a_j \cdot e_k)\right),
\end{equation}
valid when $0\le \lambda_i<d_i$, for all $i$. Combining this with the following result, we can compute $f(a_1^{\times k_1}\times\cdots\times a_1^{\times k_1},m)$ in terms of $\alpha$ and the restriction of $f$ to~$Y\times Z$, where $Y\coloneqq \{a_1,\dots,a_s\}$ and $Z\coloneqq \{e_1,\dots,e_n\}$.

\begin{proposition}\label{calculo de f 1} We have:
\begin{align*}
f(h_1\times\cdots\times h_{\ell},m) & = \sum_{j=1}^{\ell} (h_{j+1}\times \cdots \times h_{\ell})\blackdiamond f(h_j,(h_1\times\cdots \times h_{j-1})\cdot m)\\
& + \sum_{j=1}^{\ell-1} (h_{j+1}\times \cdots \times h_{\ell}) \blackdiamond f\bigl(h_j,{}^{h_j}(h_{j+1}\times \cdots \times h_{\ell})\bigr)\Yleft (h_1\times \cdots \times h_{\ell}) \cdot m\\
& + \sum_{j=1}^{\ell-1} (h_j\times \cdots \times h_{\ell}) \blackdiamond \alpha\bigl(h_j,{}^{h_j}(h_{j+1}\times \cdots \times h_{\ell})\bigr)\Yleft (h_1\times \cdots \times h_{\ell}) \cdot m.
\end{align*}
\end{proposition}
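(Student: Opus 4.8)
The plan is to induct on $\ell$, peeling off the leftmost factor and combining the two-factor formula of Remark~\ref{def de F(a,b,c)} with the inductive hypothesis. The left-hand side is unambiguous: by Corollary~\ref{coro asociativo} the map $f$ factorizes through $\Se_Y\times H$, so any bracketing of $h_1\times\cdots\times h_\ell$ yields the same value, and in particular we may read it as $h_1\times(h_2\times\cdots\times h_\ell)$. Throughout I use the conventions that an empty $\times$-product equals $0$ and that $0\blackdiamond y=y$, which is the last identity in~\eqref{eqq 2}.

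For the base case $\ell=1$ the second and third sums are empty, while the only surviving term of the first sum is $0\blackdiamond f(h_1,0\cdot m)=f(h_1,m)$, so the formula reduces to the identity $f(h_1,m)=f(h_1,m)$.

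For the inductive step I would apply Remark~\ref{def de F(a,b,c)} with $h\coloneqq h_1$ and $l\coloneqq h_2\times\cdots\times h_\ell$. This writes $f(h_1\times\cdots\times h_\ell,m)$ as $f(h_2\times\cdots\times h_\ell,h_1\cdot m)$ plus three additional terms, which turn out to be exactly the $j=1$ contributions to the three sums in the statement. I would then expand the first summand by the inductive hypothesis applied to the length-$(\ell-1)$ product $h_2\times\cdots\times h_\ell$ and the point $h_1\cdot m$; its three sums run over the shifted range $j\in\{2,\dots,\ell\}$. The final move is to rewrite every dot-action using~\eqref{accion tines con punto}, namely $(h\times l)\cdot m=l\cdot(h\cdot m)$, which turns $(h_2\times\cdots\times h_{j-1})\cdot(h_1\cdot m)$ into $(h_1\times\cdots\times h_{j-1})\cdot m$ and $(h_2\times\cdots\times h_\ell)\cdot(h_1\cdot m)$ into $(h_1\times\cdots\times h_\ell)\cdot m$. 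After these substitutions the three separated $j=1$ terms merge with the shifted sums, so that the first sum closes up as $\sum_{j=1}^{\ell}$ and the remaining two close up as $\sum_{j=1}^{\ell-1}$, which is precisely the asserted formula.

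The computation is routine and needs no structural identity beyond~\eqref{accion tines con punto} and Remark~\ref{def de F(a,b,c)}; the only point requiring care is the index bookkeeping at the endpoints. One must check that the coefficients and argument slots of the three extra terms match the $j=1$ instances of the three sums, that the empty-product and $0\blackdiamond(-)$ conventions handle the $j=1$ and $j=\ell$ boundaries consistently with the base case, and that the signs, in particular that of the $\alpha$-sum, are exactly those dictated by the two-factor formula of Remark~\ref{def de F(a,b,c)}.
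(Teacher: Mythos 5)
Your proposal is correct and coincides with the paper's own proof: both proceed by induction on $\ell$, peel off the leftmost factor via Remark~\ref{def de F(a,b,c)} with $h=h_1$ and $l=h_2\times\cdots\times h_{\ell+1}$, expand $f(h_2\times\cdots\times h_{\ell+1},h_1\cdot m)$ by the inductive hypothesis, and merge the resulting sums using~\eqref{accion tines con punto}. On the sign you rightly flagged: the two-factor formula (and consistency with~\eqref{f de times}) puts a \emph{minus} sign on the $\alpha$-term, so the induction — yours and the paper's alike — actually establishes the statement with the third sum subtracted rather than added; the plus sign in the printed proposition is a typo in the statement, harmless in the later application (Theorem~\ref{formula para f}) because there those $\alpha$-terms vanish.
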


\begin{proof} We proceed by induction on $\ell$. The case $\ell = 1$ is trivial. Assume $\ell\ge 1$ and the formula holds for $\ell$. Then, by Remark~\ref{def de F(a,b,c)},
\begin{align*}
f(h_1\times\cdots\times h_{\ell+1},m) & = f(h_2\times\cdots\times h_{\ell+1},h_1\cdot m)+ (h_2\times \cdots\times h_{\ell+1})\blackdiamond f(h_1,m)\\
& + (h_2\times\cdots\times h_{\ell+1})\blackdiamond f(h_1,{}^{h_1}(h_2\times\cdots\times h_{\ell+1})) \Yleft (h_1\times\cdots\times h_{\ell+1})\cdot m\\
& - (h_1\times\cdots\times h_{\ell+1})\blackdiamond \alpha(h_1,{}^{h_1}(h_2\times\cdots\times h_{\ell+1})) \Yleft (h_1\times\cdots\times h_{\ell+1})\cdot m.
\end{align*}
The formula for $\ell+1$ follows from this, applying the inductive hypothesis to $f(h_2\times\cdots\times h_{\ell+1},h_1\cdot m)$ and taking into account the identity~\eqref{accion tines con punto}.
\end{proof}

\section[The trivial case]{The trivial case}\label{section 3}
In this section we use the notations of the previous section, but we assume that $H = \mathds{Z}_{d_1}\oplus\cdots\oplus \mathds{Z}_{d_n}$ is trivial. That is, that $h\cdot l=l$ for all $h,l\in H$. From this, it follows that $\times=+$, $s=n$ and $a_i=e_i$, for all $i$. Let $\ov{a}_{j\ell}$ be as in Theorem~\ref{ppal1}. We will use that $\ov{e}_{j\ell}=\ell e_j$, since $\times=+$.

\subsection[The actions \texorpdfstring{$\blackdiamond$}{*} and \texorpdfstring{$\Yleft$}{<}]{The actions \texorpdfstring{$\blackdiamond$}{*} and \texorpdfstring{$\pmb\Yleft$}{<}}\label{subsection 3.1}
Let $I$ be an abelian group. In this subsection we characterize the actions $\blackdiamond$ and $\Yleft$, from $H$ on  $I$, satisfying the~condi\-tions~\eqref{eqq 1}--\eqref{eqq 3}.

\begin{proposition}\label{caracterizacion} Let $A_1,\dots,A_n,B_1,\dots,B_n$ be endomorphisms of $I$ such that, for all $1\le i\le n$ and $1\le i<j\le n$,
\begin{align}
&	A_i^{d_i}=\Ide,\quad [A_i,A_j]=0,\quad d_iB_i=0,\label{propiedades de A_i y B_i}\\
&   (A_i-A_i\xcirc B_i)^{d_i}=\Ide,\quad [A_i-A_i\xcirc B_i, A_j-A_j\xcirc B_j]=0,\label{propiedades de A_i-A_iB_i}\\
&	[B_i,A_i]=B_i\xcirc A_i\xcirc B_i\quad\text{and}\quad [B_i,A_j]=B_i\xcirc A_j\xcirc B_j.\label{condiciones corchete A_i, B_j}
\end{align}
Then, the maps $\blackdiamond\colon H\times I\to I$ and $\Yleft\colon I\times H\to I$, given by
\begin{equation}\label{acciones caso trivial}
(\lambda_1e_1+\cdots +\lambda_ne_n)\blackdiamond y\coloneqq A_1^{\lambda_1}\xcirc \cdots \xcirc A_n^{\lambda_n}y \quad\text{and}\quad y\Yleft (\lambda_1e_1+\cdots +\lambda_ne_n)\coloneqq  \lambda_1B_1 y + \cdots +\lambda_nB_n y,
\end{equation}
satisfy the conditions~\eqref{eqq 1}--\eqref{eqq 3}. Conversely, if $\blackdiamond \colon H\times I\to I$ and $\Yleft \colon I\times H\to I$ are maps satisfying these~condi\-tions, then the endomorphisms $A_1,\dots,A_n, B_1,\dots,B_n$, of $I$, given by
\begin{equation}\label{eq 5}
A_i y\coloneqq e_i\blackdiamond y\quad\text{and}\quad B_i y \coloneqq y\Yleft e_i,
\end{equation}
satisfy~\eqref{propiedades de A_i y B_i}--\eqref{condiciones corchete A_i, B_j}.
\end{proposition}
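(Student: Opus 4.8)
The plan is to prove both implications through the equivalence recorded just before diagram~\eqref{diagrama1}: conditions \eqref{eqq 1}--\eqref{eqq 3} hold if and only if $\Yleft$ is bilinear, \eqref{eqq 2} holds, and the compatibility \eqref{triangulo compatible con times} holds. Since $H$ is trivial we have $h\cdot l=l$ and $\times=+$, so \eqref{eqq 2} becomes the statement that $\blackdiamond$ is an action of the additive group $(H,+)$ (additive in the second variable, with $(l+h)\blackdiamond y=h\blackdiamond(l\blackdiamond y)$ and $0\blackdiamond y=y$), while \eqref{triangulo compatible con times} becomes
\[
(h\blackdiamond y)\Yleft l = h\blackdiamond(y\Yleft l) + \bigl(h\blackdiamond(y\Yleft h)\bigr)\Yleft l.
\]
Throughout I keep the dictionary $\phi_h(y):=h\blackdiamond y$, $\psi_h(y):=y\Yleft h$ and $C_i:=A_i-A_iB_i$, so that $y\triangleleft e_i=e_i\blackdiamond(y-y\Yleft e_i)=C_iy$, and I use that $\phi_h=\prod_i A_i^{\lambda_i}$ and $\psi_h=\sum_i\lambda_i B_i$ when $h=\sum_i\lambda_i e_i$.

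For the direct implication I would first check that the $\Yleft$ of \eqref{acciones caso trivial} is bilinear: left additivity is immediate since each $B_i$ is an endomorphism, and additivity in the $H$-slot (together with well-definedness modulo the relations $d_ie_i=0$) follows from $d_iB_i=0$. Next, $\blackdiamond$ satisfies \eqref{eqq 2}: additivity in the second variable and $0\blackdiamond y=y$ are clear, while $(l+h)\blackdiamond y=h\blackdiamond(l\blackdiamond y)$, along with independence of $\phi_h=\prod_i A_i^{\lambda_i}$ from the chosen representative of $h$, follows from $[A_i,A_j]=0$ and $A_i^{d_i}=\Ide$. It then remains to verify \eqref{triangulo compatible con times}. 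As both sides are additive in $l$, it suffices to take $l=e_j$, and the identity to prove becomes the operator equation on $I$
\[
[B_j,\phi_h] = B_j\,\phi_h\,\psi_h,\qquad \phi_h=\prod_i A_i^{\lambda_i},\quad \psi_h=\sum_i\lambda_i B_i.
\]

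This operator identity is the heart of the argument, and I expect it to be the main obstacle. It splits into two steps. First, I establish it on generators, i.e.\ $[B_j,A_k]=B_jA_kB_k$ for \emph{all} $j,k$. The cases $j=k$ and $j<k$ are precisely the hypotheses \eqref{condiciones corchete A_i, B_j}. The remaining case $j>k$ is the one genuinely absent from \eqref{condiciones corchete A_i, B_j}, and it is recovered from the commutativity $[C_k,C_j]=0$ in \eqref{propiedades de A_i-A_iB_i}: expanding $C_kC_j-C_jC_k$ and using $[A_k,A_j]=0$ together with the known relation $[B_k,A_j]=B_kA_jB_j$ gives $[C_k,C_j]=A_j\bigl([B_j,A_k]-B_jA_kB_k\bigr)$, so invertibility of $A_j$ forces $[B_j,A_k]=B_jA_kB_k$. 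This bookkeeping — that the single stated bracket relation plus commutativity of the $C_i$ is equivalent to the full symmetric family of generator relations — is exactly where \eqref{propiedades de A_i-A_iB_i} is used. The second step is an induction on the length $\sum_i\lambda_i$ of $\phi_h$: writing $\phi_h=A_k\phi$, one expands $[B_j,A_k\phi]$ using the generator relation for the pair $(j,k)$ and the inductive hypothesis $[B_j,\phi]=B_j\phi\psi$, and after cancellation the step reduces to the inductive hypothesis applied with $j$ replaced by $k$, namely $[B_k,\phi]=B_k\phi\psi$. Hence \eqref{triangulo compatible con times} holds, and by the equivalence so do \eqref{eqq 1}--\eqref{eqq 3}.

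For the converse I would simply read the conditions off the structure. Since $\blackdiamond$ is additive in the second variable and is an action of $(H,+)$, the maps $A_iy=e_i\blackdiamond y$ are endomorphisms with $A_i^{d_i}=\Ide$ (because $d_ie_i=0$ and $0\blackdiamond y=y$) and $[A_i,A_j]=0$ (because $e_i+e_j=e_j+e_i$); bilinearity of $\Yleft$ makes $B_iy=y\Yleft e_i$ an endomorphism and gives $d_iB_i=0$, establishing \eqref{propiedades de A_i y B_i}. From \eqref{eqq 1}, $\triangleleft$ is a right action of $(H,+)$ with $y\triangleleft e_i=C_iy$, whence $C_i^{d_i}=\Ide$ and $[C_i,C_j]=0$, which is \eqref{propiedades de A_i-A_iB_i}. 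Finally, evaluating the compatibility \eqref{triangulo compatible con times} at $h=e_i$, $l=e_j$ yields $[B_j,A_i]=B_jA_iB_i$ for all $i,j$, which after relabeling gives $[B_i,A_j]=B_iA_jB_j$ for all $i,j$ and in particular \eqref{condiciones corchete A_i, B_j}. This completes both directions.
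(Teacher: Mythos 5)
Your proof is correct, but it takes a genuinely different route from the paper's. The paper never invokes the equivalence of \eqref{eqq 1}--\eqref{eqq 3} with bilinearity of $\Yleft$ plus \eqref{eqq 2} plus \eqref{triangulo compatible con times}; instead it verifies \eqref{eqq 1}--\eqref{eqq 3} directly, by introducing the auxiliary right action $y\triangleleft(\lambda_1e_1+\cdots+\lambda_ne_n)\coloneqq(A_1-A_1B_1)^{\lambda_1}\cdots(A_n-A_nB_n)^{\lambda_n}y$ --- well defined and satisfying \eqref{eqq 1} precisely because of \eqref{propiedades de A_i-A_iB_i} --- and then proving that this coincides with $h\blackdiamond(y-y\Yleft h)$, i.e.\ the operator identity \eqref{concretamente}, via a nested induction (first on the exponent of a single generator, then on the number of generators). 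Your pivotal identity is instead the commutator form $[B_j,\phi_h]=B_j\phi_h\psi_h$ of \eqref{triangulo compatible con times}, where $\phi_h=A_1^{\lambda_1}\cdots A_n^{\lambda_n}$ and $\psi_h=\sum_i\lambda_iB_i$. The structural difference shows up in how the hypotheses are consumed: the paper's ordered-product induction only ever commutes the lowest-index $B_1$ past higher-index $A_i$'s, so it needs the bracket relations $[B_i,A_j]=B_iA_jB_j$ only for $i\le j$ --- exactly what \eqref{condiciones corchete A_i, B_j} provides --- and uses \eqref{propiedades de A_i-A_iB_i} solely for well-definedness of the auxiliary action; you instead need the bracket relation for \emph{all} pairs, including the case $j>k$ absent from the hypotheses, and your lemma recovering it from $[C_k,C_j]=A_j\bigl([B_j,A_k]-B_jA_kB_k\bigr)$ together with invertibility of $A_j$ (I checked this computation; it is correct, using $[A_k,A_j]=0$ and $[B_k,A_j]=B_kA_jB_j$) is the extra step your route requires. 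In the converse, your derivation of \eqref{condiciones corchete A_i, B_j} by evaluating \eqref{triangulo compatible con times} at $h=e_i$, $l=e_j$ is more direct than the paper's, which extracts those relations from \eqref{concretamente} specialized at $h=2e_i$ and $h=e_i+e_j$. On balance, your argument makes transparent where each hypothesis enters and gives a shorter converse, while the paper's is independent of the full equivalence with \eqref{triangulo compatible con times} and produces the explicit product formula for the right action $\triangleleft$ as a by-product.
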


\begin{proof} $\Rightarrow$)\enspace Since $A_1^{d_1}=\cdots = A_n^{d_n}=\Ide$ and $[A_i,A_j]=0$, for $1\le i<j\le n$, the map is well defined and $\blackdiamond$ satisfies conditions~\eqref{eqq 2}. Similarly, since conditions~\eqref{propiedades de A_i-A_iB_i} are satisfied, the map $\triangleleft\colon I\times H\to I$, given by
$$
y\triangleleft (\lambda_1e_1+\cdots +\lambda_ne_n)\coloneqq (A_1-A_1\xcirc B_1)^{\lambda_1}\cdots (A_n-A_n\xcirc B_n)^{\lambda_n} y,
$$
is well defined and satisfies conditions~\eqref{eqq 1}. Moreover, the map $\Yleft$ is well defined, since $d_iB_i=0$, and a direct~compu\-tation proves that the operation
$$
y^{(\lambda_1e_1+\cdots +\lambda_ne_n)}\coloneqq y - y\Yleft (\lambda_1e_1+\cdots +\lambda_ne_n)=(\Ide - \lambda_1 B_1-\cdots -\lambda_n B_n)y,
$$
satisfies~\eqref{eqq 3}. To finish the proof, it remains to check that
$$
(\lambda_1e_1+\cdots +\lambda_ne_n)\blackdiamond (y - y\Yleft (\lambda_1e_1+\cdots +\lambda_ne_n)) = y\triangleleft (\lambda_1e_1+\cdots +\lambda_ne_n),
$$
for $y\in I$ and $\lambda_1e_1+\cdots +\lambda_ne_n\in H$. Concretely, we must show that
\begin{equation}
A_1^{\lambda_1}\xcirc\cdots\xcirc  A_n^{\lambda_n}(\Ide -\lambda_1B_1-\cdots - \lambda_nB_n)= (A_1-A_1\xcirc B_1)^{\lambda_1}\cdots (A_n-A_n\xcirc B_n)^{\lambda_n}, \label{concretamente}
\end{equation}
for all $\lambda_1e_1+\cdots+\lambda_ne_n\in H$. For this we prove by induction on $k\le n$, that
\begin{equation}
A_1^{\lambda_1}\xcirc\cdots\xcirc A_k^{\lambda_k}(\Ide -\lambda_1B_1-\cdots - \lambda_kB_k)= (A_1-A_1\xcirc B_1)^{\lambda_1}\xcirc\cdots\xcirc (A_k-A_k\xcirc B_k)^{\lambda_k}, \label{concretamente con k}
\end{equation}
for $\lambda_1e_1+\cdots+\lambda_ke_k\in H$. Assume that $k=1$. To check~\eqref{concretamente con k}, we must verify~that
\begin{equation}\label{case k=1}
(A_1-A_1\xcirc B_1)^{\lambda}= A_1^{\lambda}\xcirc (\Ide - \lambda B_1)= A_1^{\lambda} - \lambda A_1^{\lambda}\xcirc B_1,\quad\text{for all $\lambda\in \mathds{Z}$.}
\end{equation}
This is trivially true, when $\lambda=0$ and $\lambda=1$. Assume that it is true for some $\lambda\ge 1$. Then, since
$$
B_1\xcirc (A_1 - A_1\xcirc B_1)= A_1\xcirc B_1,
$$
we have
\begin{align*}
(A_1-A_1\xcirc B_1)^{\lambda+1}&=\left(A_1^{\lambda}-{\lambda}A_1^{\lambda}\xcirc B_1\right)\xcirc (A_1-A_1\xcirc B_1)\\
&=A_1^{\lambda+1}-A_1^{\lambda+1}\xcirc B_1-\lambda A_1^\lambda\xcirc B_1\xcirc (A_1-A_1\xcirc B_1)\\
&= A_1^{\lambda+1}-A_1^{\lambda+1}\xcirc B_1-\lambda A_1^{\lambda}\xcirc A_1\xcirc B_1\\
&= A_1^{\lambda+1}-(\lambda+1) A_1^{\lambda+1}\xcirc B_1,
\end{align*}
and so, \eqref{case k=1} holds, for all $\lambda\in \mathds{N}_0$. In order to conclude that it holds, for all $\lambda\in \mathds{Z}$, it suffices to note that
$$
(A_1-A_1\xcirc B_1)^{d_1} = \Ide = A_1^{d_1} = A_1^{d_1}-d_1 A_1^{d_1}\xcirc B_1.
$$
Assume now that~\eqref{concretamente con k} is true for some~$k<n$. Then, by the inductive hypothesis,
\begin{align*}
A_1^{\lambda_1}\xcirc\cdots\xcirc A_{k+1}^{\lambda_{k+1}}&\xcirc (\Ide -\lambda_1B_1-\cdots - \lambda_{k+1}B_{k+1}) = A_1^{\lambda_1}\xcirc\cdots\xcirc A_{k+1}^{\lambda_{k+1}}\xcirc (-\lambda_1B_1 +\Ide -\lambda_2B_2-\cdots - \lambda_{k+1}B_{k+1})\\
&= -\lambda_1A_1^{\lambda_1}\xcirc\cdots\xcirc A_{k+1}^{\lambda_{k+1}}\xcirc B_1+ A_1^{\lambda_1}\xcirc (A_2-A_2\xcirc B_2)^{\lambda_2}\xcirc\cdots\xcirc  (A_{k+1}-A_{k+1}\xcirc B_{k+1})^{\lambda_{k+1}}.
\end{align*}
But an easy induction, using that $A_i\xcirc B_1=B_1\xcirc (A_i - A_i\xcirc B_i)$, proves that
$$
\lambda_1A_1^{\lambda_1}\xcirc\cdots\xcirc A_{k+1}^{\lambda_{k+1}}\xcirc B_1= \lambda_1A_1^{\lambda_1}\xcirc B_1\xcirc (A_2-A_2\xcirc B_2)^{\lambda_2}\xcirc \cdots\xcirc (A_{k+1}-A_{k+1}\xcirc B_{k+1})^{\lambda_{k+1}},
$$
and so
\begin{align*}
A_1^{\lambda_1}\xcirc\cdots\xcirc A_{k+1}^{\lambda_{k+1}}\xcirc (\Ide -\lambda_1B_1-\cdots - \lambda_{k+1}B_{k+1})& = (A_1^{\lambda_1}- \lambda_1A_1^{\lambda_1}\xcirc B_1)\xcirc (A_2-A_2\xcirc B_2)^{\lambda_2}\xcirc\cdots\xcirc (A_{k+1}-A_{k+1}\xcirc B_{k+1})^{\lambda_{k+1}}\\
& = (A_1-A_1\xcirc B_1)^{\lambda_1}\xcirc\cdots\xcirc (A_{k+1}-A_{k+1}\xcirc B_{k+1})^{\lambda_{k+1}},
\end{align*}
as desired.
		
\smallskip
		
\noindent $\Leftarrow$)\enspace By~\eqref{eqq 2}, we have
$$
A_i^{d_i}=\Ide, \text{ for $1\le i \le n$}\quad\text{and}\quad [A_i,A_j]=0, \text{ for $1\le i<j\le n$.}
$$
Furthermore, by the bilinearity of $\Yleft$ and the definition of $B_i$, we have
$$
d_iB_i y = d_i(y\Yleft e_i) = y\Yleft d_ie_i =0.
$$
Thus, the identities~\eqref{propiedades de A_i y B_i} hold. On the other hand, since $y\triangleleft e_i=(A_i-A_i\xcirc B_i)y$, the identities in~\eqref{propiedades de A_i-A_iB_i} follows directly from~\eqref{eqq 1}, and the fact that $d_ie_i=0$ and $e_i+e_j=e_j+e_i$. It remains to check the identities in~\eqref{condiciones corchete A_i, B_j}, but, since these follow easily from the identity~\eqref{concretamente} with $\lambda_1e_1+\cdots +\lambda_ne_n=2e_i$ and with $\lambda_1e_1+\cdots +\lambda_ne_n=e_i+e_j$, it suffices to prove~\eqref{concretamente}. But this is true, because
\begin{align*}
A_1^{\lambda_1}\xcirc\cdots\xcirc A_n^{\lambda_n}\xcirc (\Ide -\lambda_1B_1-\cdots - \lambda_nB_n)y&= (\lambda_1e_1+\cdots +\lambda_ne_n)\blackdiamond (y - \lambda_1  y\Yleft e_1-\cdots -\lambda_n  y\Yleft e_n)\\
&= (\lambda_1e_1+\cdots +\lambda_ne_n)\blackdiamond (y - y\Yleft (\lambda_1e_1+\cdots +\lambda_ne_n))\\
&= y\triangleleft (\lambda_1e_1+\cdots +\lambda_ne_n)\\
&=(A_1-A_1\xcirc B_1)^{\lambda_1}\xcirc\cdots\xcirc (A_n-A_n\xcirc B_n)^{\lambda_n}y,
\end{align*}
where the first equality holds by the definitions of the $A_i$ and $B_j$; the second one, since $\Yleft$ is bilinear; the third one, by the definition of~$\triangleleft$; and the last one, by the first identity in~\eqref{eqq 1}.
\end{proof}

\begin{remark}\label{alpha, etc} Le $I$ be an abelian group, let $\blackdiamond\colon H\times I\to I$ and $\Yleft \colon I\times H\to I$ be maps satisfying coinditions~\eqref{eqq 1}--\eqref{eqq 3}, and let $\alpha = \alpha_{\gamma_1,\dots, \gamma_n}\in \wh{C}^{02}_N(H,I)$ be as above of Corollary~\ref{basta tomar combinacion de estandar}. By Proposition~\ref{dependencia de f}, the fact that $H$ is trivial and the definition of $\alpha$, if $(\alpha,-f)\in \wh{C}^{02}_N(H,I)\oplus \wh{C}^{11}_N(H,I)$ is a $2$-cocycle of the complex $(\wh{C}_N^*(H,I),\partial+D)$, then
$$
f\left(h,\sum_{i=1}^n \lambda_i e_i\right) = \sum_{i=1}^n \lambda_i f(h,e_i)\qquad\text{and}\qquad d_i f(h,e_i)+\gamma_i-h\blackdiamond \gamma_i = 0,
$$
for all $h\in H$, $1\le i \le n$ and $0\le \lambda_i< d_i$.
\end{remark}

\begin{remark}\label{T_1, T2, T3 y S cuando H es trivial} By the fact that $H$ is trivial, the definition of $\alpha_{\gamma_1,\cdots,\gamma_n}$ and the previous remark, the maps $T_1$, $T_2$, $T_3$ and $S$, introduced above Corollary~\ref{calculo de la homo B'}, reduce to
\begin{align*}
& T_1((\gamma_i),(\mathfrak{f}_{ji})) = \left(d_i \mathfrak{f}_{ji}+\gamma_i-A_j\gamma_i\right)_{ij},\\
& T_2((\gamma_i),(\mathfrak{f}_{ji}))\coloneqq \left(\sum_{\ell=0}^{d_j-1} A_j^{r_j-1-\ell} \mathfrak{f}_{ji} +B_i\xcirc \sum_{\ell=1}^{d_j-1} \ell A_j^{\ell} \mathfrak{f}_{jj} - B_i\gamma_j\right)_{ij},\\
& T_3((\gamma_i),(\mathfrak{f}_{ij}))\coloneqq (\mathfrak{f}_{ik} - \mathfrak{f}_{jk} + A_i \mathfrak{f}_{jk} - A_j \mathfrak{f}_{ik}+ B_k\xcirc A_i \mathfrak{f}_{ji}-B_k\xcirc A_j \mathfrak{f}_{ij}
)_{i<j,k},
\shortintertext{and}
& S(t_1,\dots,t_n)\coloneqq \bigl((d_i t_i)_i,\left(A_j t_i- t_i + B_i\xcirc A_it_j \right)_{ij}\bigr),
\end{align*}
where $A_i$ and $B_i$ are as in~\eqref{eq 5}.
\end{remark}

\subsection[Computing the map \texorpdfstring{$f$}{f} in the trivial case]{Computing the map \texorpdfstring{$\mathbf{f}$}{f} in the trivial case}\label{subsection 3.2}
Let $I$ be an abelian group, let $\blackdiamond\colon H\times I\to I$ and $\Yleft \colon I\times H\to I$ be maps satisfying coinditions~\eqref{eqq 1}--\eqref{eqq 3} and let~$(\alpha,-f) \in \wh{C}^{02}_N(H,I)\oplus \wh{C}^{11}_N(H,I)$ be a $2$-cocycle of $(\wh{C}_N^*(H,I),\partial+D)$, with $\alpha\coloneqq \alpha_{\gamma_1,\dots,\gamma_n}$. Recall that $H$ is trivial.

\smallskip

As we saw above Corollary~\ref{calculo de la homo B'}, the map $f$ is determined by the elements $\gamma_i$ and the elements $\mathfrak{f}_{ji}\coloneqq f(e_j,e_i)$.~This map is denoted by $f_{(\mathfrak{f}_{ji}),(\gamma_i)}$. The following result shows that, when $H$ is trivial, $f$ becomes independent of the~$\gamma_i$'s. Therefore, in this case, we will write $f_{(\mathfrak{f}_{ji})}$ instead of $f_{(\mathfrak{f}_{ji}),(\gamma_i)}$.

\begin{theorem}\label{formula para f} For $0\le k_1,\lambda_1<d_1,\dots,0\le k_n,\lambda_n<d_n$, the following formula holds:
\begin{align*}
f\left(\sum k_je_j,\sum \lambda_ue_u\right) & = \sum_{j=1}^n \sum_{\ell=0}^{k_j-1} \sum_{u=1}^n \lambda_u A_j^{k_j-\ell-1}\xcirc A_{j+1}^{k_{j+1}}\xcirc\cdots\xcirc A_n^{k_n}f(e_j,e_u)\\
& + \sum_{j=1}^{n-1}\sum_{\ell=0}^{k_j-1} \sum_{i=j+1}^n \sum_{u=1}^n k_i\lambda_u B_u\xcirc A_j^{k_j-\ell-1}\xcirc A_{j+1}^{k_{j+1}}\xcirc\cdots\xcirc  A_n^{k_n} f(e_j, e_i)\\
& + \sum_{j=1}^n\sum_{\ell=1}^{k_j-1}\sum_{u=1}^n \ell\lambda_u B_u\xcirc A_j^{\ell}\xcirc A_{j+1}^{k_{j+1}}\xcirc\cdots\xcirc A_n^{k_n} f(e_j,e_j),
\end{align*}
where $A_i$ and $B_i$ are as in~\eqref{eq 5}.
\end{theorem}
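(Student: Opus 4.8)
The plan is to compute $f(h,m)$, with $h=\sum_j k_je_j$ and $m=\sum_u\lambda_ue_u$, by applying Proposition~\ref{calculo de f 1} to a convenient factorization of $h$ and then collapsing everything using the triviality of $H$. Since $\times=+$ and $a_i=e_i$, I would write $h=e_1^{\times k_1}\times\cdots\times e_n^{\times k_n}$ and regard it as an ordered product $h_1\times\cdots\times h_L$ of $L\coloneqq k_1+\cdots+k_n$ generators, where $h_p=e_j$ exactly when $p$ lies in the $j$-th block, i.e. $k_1+\cdots+k_{j-1}<p\le k_1+\cdots+k_j$. For such a $p$, say the $(t+1)$-th copy of $e_j$ with $0\le t<k_j$, the tail is $h_{p+1}\times\cdots\times h_L=(k_j-t-1)e_j+k_{j+1}e_{j+1}+\cdots+k_ne_n$, so by the first identity of~\eqref{eqq 2}, the commutativity $[A_i,A_j]=0$, and~\eqref{eq 5}, the operator $(h_{p+1}\times\cdots\times h_L)\blackdiamond(-)$ equals $A_j^{k_j-t-1}A_{j+1}^{k_{j+1}}\cdots A_n^{k_n}(-)$. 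Because $H$ is trivial we also have $h\cdot m=m$ and ${}^{h}l=l$ everywhere, so in Proposition~\ref{calculo de f 1} each factor $(h_1\times\cdots\times h_{j-1})\cdot m$ and $(h_1\times\cdots\times h_L)\cdot m$ collapses to $m$, and each ${}^{h_p}(h_{p+1}\times\cdots\times h_L)$ collapses to $h_{p+1}\times\cdots\times h_L$.

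Next I would record two linearization facts valid in the trivial range $0\le k_j<d_j$, $0\le\lambda_u<d_u$. First, from~\eqref{quasilinealij 1}, \eqref{quasilinealij 2} and the explicit description of $\alpha_{\gamma_1,\dots,\gamma_n}$ in Remark~\ref{nota1}, every $\alpha$-correction vanishes: the diagonal terms $\alpha(e_j\cdot e_u,t(e_j\cdot e_u))=\alpha(e_u,te_u)$ are zero for $0\le t<d_u-1$, and the mixed terms $\alpha\bigl(\sum_{i<k}\lambda_ie_i,\lambda_ke_k\bigr)$ are zero because the two arguments have disjoint coordinate supports. Hence $f(e_j,\sum_u\lambda_ue_u)=\sum_u\lambda_uf(e_j,e_u)$, and the same reasoning gives $f\bigl(e_j,(k_j-t-1)e_j+\sum_{i>j}k_ie_i\bigr)=(k_j-t-1)f(e_j,e_j)+\sum_{i>j}k_if(e_j,e_i)$. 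Second, the entire third (the $\alpha$) sum of Proposition~\ref{calculo de f 1} vanishes: for $p$ in the $j$-th block the relevant entry is $\alpha\bigl(e_j,(k_j-t-1)e_j+\sum_{i>j}k_ie_i\bigr)$, whose only potentially non-zero coordinate is the $j$-th, where the two entries are $1$ and $k_j-t-1$ with $1+(k_j-t-1)=k_j-t\le k_j<d_j$, so it is $0$.

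With these simplifications I would assemble the two surviving sums. The first sum of Proposition~\ref{calculo de f 1} becomes $\sum_p(h_{p+1}\times\cdots\times h_L)\blackdiamond f(h_p,m)$; reindexing $p$ as the pair $(j,t)$, substituting $f(e_j,m)=\sum_u\lambda_uf(e_j,e_u)$ and the operator expression for the tail reproduces, after renaming $t$ to $\ell$, the first displayed sum of the theorem. The second sum equals $\sum_p\bigl((h_{p+1}\times\cdots\times h_L)\blackdiamond f(h_p,h_{p+1}\times\cdots\times h_L)\bigr)\Yleft m$; using bilinearity of $\Yleft$ and $y\Yleft e_u=B_uy$ to turn $\Yleft m$ into $\sum_u\lambda_uB_u(-)$, and inserting the linearized value of $f(e_j,\cdot)$ above, it splits into two pieces. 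The piece carrying $f(e_j,e_i)$ with $i>j$ gives, after renaming $t$ to $\ell$, the theorem's second sum (the range of $j$ shrinks to $1\le j\le n-1$ since there is no $i>n$), while the piece carrying the coefficient $(k_j-t-1)f(e_j,e_j)$ gives the third sum after the substitution $\ell=k_j-t-1$, the term $\ell=0$ dropping out by its own coefficient.

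I expect the only genuine obstacle to be the bookkeeping: organizing the single running index $p$ over the factors into the double index (block $j$, position $t$) consistently across all sums, and verifying in each place that the coordinatewise support structure of $\alpha_{\gamma_1,\dots,\gamma_n}$ forces every $\gamma$-contribution to disappear. This vanishing, which hinges precisely on the standing hypotheses $0\le k_j<d_j$ and $0\le\lambda_u<d_u$, is what explains the complete absence of $\alpha$ or $\gamma$ terms in the final formula, and it is the step I would check most carefully.
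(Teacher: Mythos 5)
Your proof is correct, and it takes a recognizably different route from the paper's, at the level of the chosen factorization. The paper splits $h=\sum_j k_je_j$ into the $n$ \emph{blocks} $k_1e_1,\dots,k_ne_n$, applies Proposition~\ref{calculo de f 1} to that length-$n$ product, and then expands each $f(k_je_j,\cdot)$ \emph{inside} a block using Proposition~\ref{delta nos da times}. That two-level decomposition produces doubly nested $\Yleft$-expressions of the form $\mathrm{tail}\blackdiamond\bigl(y\Yleft \mathrm{tail}\bigr)\Yleft m$ together with matching terms $\mathrm{tail}\blackdiamond\bigl(y\Yleft m\bigr)$, and the paper must invoke identity~\eqref{triangulo compatible con times} to merge these pairs into $(\ell e_j+\mathrm{tail})\blackdiamond f(e_j,\ell e_j)\Yleft m$ before linearizing via~\eqref{zazaza} and passing to the operators $A_i$, $B_i$ of Proposition~\ref{caracterizacion}. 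You instead factor $h$ all the way down into $L=k_1+\cdots+k_n$ individual generators and apply Proposition~\ref{calculo de f 1} exactly once; since $H$ is trivial, every dot-action and every superscript-action collapses, so each term of the second sum already has the flat shape $(\mathrm{operator})\Yleft m$ and no nested $\Yleft$ ever arises --- you thereby bypass both Proposition~\ref{delta nos da times} and~\eqref{triangulo compatible con times} entirely, at the cost of the heavier $(j,t)$-bookkeeping you flag. The remaining ingredients are common to both arguments: the vanishing of all $\alpha$-corrections (the coordinate-support argument, which indeed hinges on $0\le k_j,\lambda_j<d_j$, and which you correctly identify as the delicate point), the quasi-linearity identities, and the dictionary~\eqref{acciones caso trivial}--\eqref{eq 5}. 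One cosmetic remark: strictly speaking the linearization identities you cite, \eqref{quasilinealij 1}--\eqref{quasilinealij 2}, are stated for the constructed map $f_1$, whereas for a cocycle $f$ you should cite the equivalent formulas~\eqref{quasilineal 1}--\eqref{quasilineal 2} of Proposition~\ref{dependencia de f} (or~\eqref{zazaza}, as the paper does); in the trivial case, where $a_j=e_j$, these coincide, so nothing breaks.
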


\begin{proof} For the sake of brevity we write $m\coloneqq \lambda_1e_1+\cdots+\lambda_ne_n$. By the definition of $\alpha$, we have
$$
\alpha\left(k_j e_j,{}^{k_j e_j}(k_{j+1} e_{j+1}+\cdots +k_ne_n) \right) = \alpha\left(k_j e_j, k_{j+1} e_{j+1}+\cdots +k_ne_n \right) = 0.
$$
Hence, by Proposition~\ref{calculo de f 1},
\begin{align*}
f(k_1e_1+\cdots+k_ne_n,m) & = \sum_{j=1}^n (k_{j+1}e_{j+1}+\cdots+k_ne_n) \blackdiamond f(k_je_j,m)\\
& + \sum_{j=1}^{n-1} (k_{j+1}e_{j+1}+ \cdots + k_n e_n) \blackdiamond f(k_je_j, k_{j+1}e_{j+1}+ \cdots + k_ne_n)\Yleft  m.
\end{align*}
On the other hand, since $\alpha(e_j,\bar{e}_{j\ell}) = \alpha(e_j,\ell e_j) = 0$, for $0\le \ell < d_j-1$, by Proposition~\ref{delta nos da times}, we have
$$
f(k e_j,h) = \sum_{\ell=0}^{k-1} (k-\ell-1)e_j\blackdiamond f(e_j,h) +\sum_{\ell=1}^{k-1} \ell e_j \blackdiamond f(e_j,\ell e_j)\Yleft h.
$$
Consequently
\begin{align*}
f(k_1e_1+\cdots+k_ne_n,m) & = \sum_{j=1}^n \sum_{\ell=0}^{k_j-1} \left((k_j-\ell-1)e_j+k_{j+1}e_{j+1}+\cdots+k_ne_n\right) \blackdiamond f(e_j,m)\\
& + \sum_{j=1}^n \sum_{\ell=1}^{k_j-1} (k_{j+1}e_{j+1}+\cdots+k_ne_n) \blackdiamond \bigl(\ell e_j\blackdiamond f(e_j,\ell e_j)\Yleft m\bigr)\\
& + \sum_{j=1}^{n-1} \sum_{\ell=0}^{k_j-1} \left((k_j-\ell-1)e_j + k_{j+1}e_{j+1}+ \cdots + k_n e_n\right) \blackdiamond f(e_j, k_{j+1}e_{j+1}+ \cdots + k_ne_n)\Yleft  m\\
& + \sum_{j=1}^{n-1}\sum_{\ell=1}^{k_j-1} (k_{j+1}e_{j+1}+ \cdots + k_n e_n) \blackdiamond \Bigl(\bigl(\ell e_j\blackdiamond f(e_j, \ell e_j)\bigr) \Yleft (k_{j+1}e_{j+1}+ \cdots + k_ne_n)\Bigr)\Yleft m\\
& = \sum_{j=1}^n \sum_{\ell=0}^{k_j-1} \left((k_j-\ell-1)e_j+k_{j+1}e_{j+1}+\cdots+k_ne_n\right) \blackdiamond f(e_j,m)\\
& + \sum_{j=1}^n \sum_{\ell=1}^{k_j-1} (k_{j+1}e_{j+1}+\cdots+k_ne_n) \blackdiamond \bigl(\ell e_j\blackdiamond f(e_j,\ell e_j)\Yleft m\bigr)\\
& + \sum_{j=1}^{n-1} \sum_{\ell=0}^{k_j-1} \left((k_j-\ell-1)e_j + k_{j+1}e_{j+1}+ \cdots + k_n e_n\right) \blackdiamond f(e_j, k_{j+1}e_{j+1}+ \cdots + k_ne_n)\Yleft  m\\
& + \sum_{j=1}^{n-1} \sum_{\ell=1}^{k_j-1} \left(\ell e_j + k_{j+1}e_{j+1}+ \cdots + k_n e_n\right) \blackdiamond f(e_j,\ell e_j) \Yleft m\\
& - \sum_{j=1}^{n-1}\sum_{\ell=1}^{k_j-1} (k_{j+1}e_{j+1}+ \cdots + k_n e_n) \blackdiamond \bigl(\ell e_j\blackdiamond f(e_j, \ell e_j) \Yleft m\bigr)\\
& = \sum_{j=1}^n \sum_{\ell=0}^{k_j-1} \left((k_j-\ell-1)e_j+k_{j+1}e_{j+1}+\cdots+k_ne_n\right) \blackdiamond f(e_j,m)\\
& + \sum_{j=1}^{n-1} \sum_{\ell=0}^{k_j-1} \left((k_j-\ell-1)e_j + k_{j+1}e_{j+1}+ \cdots + k_n e_n\right) \blackdiamond f(e_j, k_{j+1}e_{j+1}+ \cdots + k_ne_n)\Yleft  m\\
& + \sum_{j=1}^n \sum_{\ell=1}^{k_j-1} \left(\ell e_j + k_{j+1}e_{j+1}+ \cdots + k_n e_n\right) \blackdiamond f(e_j,\ell e_j) \Yleft m,
\end{align*}
where the second equality holds by~\eqref{triangulo compatible con times}. Note now that, since
\begin{align*}
&\alpha(e_j\cdot e_i,\ell(e_j\cdot e_i)) = \alpha(e_i,\ell e_i) = 0\quad\text{for $0\le \ell <d_i-1$}
\shortintertext{and}
&\alpha\left(\sum_{i=1}^{k-1}\lambda_i (e_j\cdot e_i), \lambda_k (e_j \cdot e_k)\right) = \alpha\left(\sum_{i=1}^{k-1}\lambda_i e_i, \lambda_k e_k\right) = 0,
\end{align*}
from the identity~\eqref{zazaza}, it follows that
\begin{align*}
f(k_1e_1+\cdots+k_ne_n,m) & = \sum_{j=1}^n \sum_{\ell=0}^{k_j-1} \left((k_j-\ell-1)e_j+k_{j+1}e_{j+1}+\cdots+k_ne_n\right) \blackdiamond f(e_j,m)\\
& + \sum_{j=1}^{n-1}\sum_{\ell=0}^{k_j-1} \sum_{i=j+1}^n \left((k_j-\ell-1)e_j + k_{j+1}e_{j+1}+\cdots+ k_n e_n\right)\blackdiamond k_i f(e_j, e_i)\Yleft m\\
& + \sum_{j=1}^n \sum_{\ell=1}^{k_j-1} \left(\ell e_j + k_{j+1}e_{j+1}+ \cdots + k_n e_n\right)\blackdiamond \ell f(e_j,e_j) \Yleft m.
\end{align*}
Applying again the identity~\eqref{zazaza}, and using  Proposition~\ref{caracterizacion}, and that $\blackdiamond$ and $\Yleft$ are right linear, we obtain the~formu\-la in the statement.
\end{proof}

\subsection[Some examples]{Some examples}
Assume that $\Yleft=0$ (which, by~\cite{GGV1}*{Remark~5.13}, is the condition we must require when looking for extensions of $H$ by $I$ such that $I$ is included in the socle of the extension). Then, by~\eqref{acciones caso trivial}, we have $B_1=\cdots =B_n = 0$. Consequently, conditions~\eqref{propiedades de A_i y B_i}--\eqref{condiciones corchete A_i, B_j} reduce to
$$
A_i^{d_i} = \Ide\quad\text{and}\quad [A_i,A_j] = 0\qquad\text{for all $i$ and $j$.}
$$
Moreover, a straightforward computation, using Remark~\ref{T_1, T2, T3 y S cuando H es trivial}, shows that
\begin{align*}
& T_1((\gamma_i),(\mathfrak{f}_{ji}))\coloneqq \left(d_i \mathfrak{f}_{ji}+\gamma_i - A_j\gamma_i\right)_{ij},\\
& T_2((\gamma_i),(\mathfrak{f}_{ji}))\coloneqq \left(\sum_{\ell=0}^{d_j-1}A_j^{\ell}\mathfrak{f}_{ji} \right)_{ij},\\
& T_3((\gamma_i),(\mathfrak{f}_{ij}))\coloneqq \bigl(\mathfrak{f}_{ik} + A_i \mathfrak{f}_{jk}- \mathfrak{f}_{jk} - A_j \mathfrak{f}_{ik}\bigr)_{i<j,k}\\
\shortintertext{and}
& S(t_1,\dots,t_n)\coloneqq \left((d_i t_i)_i, (A_jt_i - t_i)_{ij}\right).
\end{align*}
Moreover, by Theorem~\ref{formula para f}, we have
\begin{equation}\label{zazaza1}
f\left(\sum k_je_j,\sum\lambda_ie_i\right) = \sum_{j=1}^n\sum_{\ell=0}^{k_j-1}\sum_{i=1}^n\lambda_i A_j^{k_j-\ell-1}\xcirc A_{j+1}^{k_{j+1}}\xcirc\cdots \xcirc  A_n^{k_n} \mathfrak{f}_{ji},
\end{equation}
where $0\le k_1,\lambda_1<d_1,\dots,0\le k_n,\lambda_n<d_n$ and $f\coloneqq f_{(\mathfrak{f}_{ji})}$.

\begin{example}\label{caso trivial I= Zp} Let $p$ be a prime number. Assume that $d_i\coloneqq p^{\eta_i}$ with $\eta_i\in \mathds{N}$ and that $I\coloneqq \mathds{Z}_p$. Then
$A_i=A_i^{p^{\eta_i}}=\Ide$.
 In this case $T_1=T_2=0$, $T_3=0$ and $S=0$. Thus, $\Ho^2_{\blackdiamond}(H,I) = I^n\oplus I^{sn}$. Moreover equality~\eqref{zazaza1} becomes
\begin{equation*}
f(k_1e_1+\cdots+k_ne_n,\lambda_1e_1+\cdots+\lambda_ne_n) = \sum_{j=1}^n \sum_{i=1}^n  k_j\lambda_i f(e_j,e_i).
\end{equation*}
Note that in this case the condition $B_i=0$ is automatically fulfilled, since $\Ide-B_i=(\Ide-B_i)^{p^{\eta_i}}=\Ide$. 
\end{example}

\begin{example}\label{caso trivial I= Zp^2} Let $p$ be a prime number and let $2\le \eta_1,\eta_2$. Assume that $n=2$, $d_1\coloneqq p^{\eta_1}$, $d_2\coloneqq p^{\eta_2}$ and $I\coloneqq \mathds{Z}_{p^2}$. Then, there exist $0\le b_i<p$ such that $A_i$ is the multiplication by $1+pb_i$. Assume first that some $b_i\ne 0$. Renaming the indices if necessary we can suppose that $b_1\ne 0$. Let $0<d<p$ be such that $db_1\equiv 1\pmod{p}$, and let $\Phi\colon I^6\to I^6$ be the map given by
$$
\Phi(y_1,y_2,y_{11},y_{12},y_{21},y_{22}) = (y_1,y_2,y_{11},y_{12},y_{21}+db_2y_{11},y_{22}+db_2y_{12}).
$$
Let $\wt{T}_1\coloneqq T_1\xcirc \Phi$, $\wt{T}_2\coloneqq T_2\xcirc \Phi$, $\wt{T}_3\coloneqq T_3\xcirc \Phi$ and $\wt{S}\coloneqq \Phi^{-1}\xcirc S$. We claim that
\begin{align*}
& \wt{T}_1(y_1,y_2,y_{11},y_{12},y_{21},y_{22}) = (-pb_1y_1,-pb_2y_1,-pb_1y_2,-pb_2y_2),\\
& \wt{T}_2(y_1,y_2,y_{11},y_{12},y_{21},y_{22}) = (0,0,0,0),\\
&\wt{T}_3(y_1,y_2,y_{11},y_{12},y_{21},y_{22}) = (pb_1y_{21},pb_1y_{22})\\
\shortintertext{and}
&\wt{S}(x_1,x_2) = (0,0,pb_1x_1,pb_1x_2,0,0).
\end{align*}
The formulas for $\wt{T}_1$, $\wt{T}_3$ and $\wt{S}$ follow by a straightforward computation; while the formula for $\wt{T}_2$, holds since
\begin{equation*}
\sum_{j=0}^{p^{\eta_i}-1} (1+pb_i)^j = \sum_{j=0}^{p^{\eta_i}-1}\sum_{\ell=0}^j \binom{j}{\ell}p^{\ell}b_i^{\ell} = \sum_{\ell=0}^{p^{\eta_i}-1}\sum_{j=\ell}^{p^{\eta}-1} \binom{j}{\ell}p^{\ell}b_i^{\ell} = \sum_{\ell=0}^{p^{\eta_i}-1}\binom{p^{\eta_i}}{\ell+1} p^{\ell}b_i^{\ell} = 0,
\end{equation*}
where the last equality holds because  $p^2\mid \binom{p^{\eta_i}}{\ell+1}p^{\ell}$, for all $\ell$. Hence
\begin{equation*}
\ho_{\blackdiamond}^2(H,I) \simeq \frac{\ker \wt{T}_1\cap \ker \wt{T}_2\cap \ker \wt{T}_3}{\ima \wt{S}} = p\mathds{Z}_{p^2}\oplus p\mathds{Z}_{p^2}\oplus \frac{\mathds{Z}_{p^2}}{p\mathds{Z}_{p^2}}\oplus \frac{\mathds{Z}_{p^2}}{p\mathds{Z}_{p^2}}\oplus p\mathds{Z}_{p^2}\oplus p\mathds{Z}_{p^2}.
\end{equation*}
Assume now that $b_1=b_2= 0$. Then, $T_1=T_2=0$, $T_3=0$ and $S=0$. Thus, $\Ho^2_{\blackdiamond}(H,I) = I^2\oplus I^4$.
\end{example}

\begin{example}\label{ejemplo} Assume that $n=s=1$ and write $\gamma\coloneqq \gamma_1$, $\mathfrak{f}_0\coloneqq \mathfrak{f}_{11}$, $A\coloneqq A_1$ and $d\coloneqq d_1$. We have $e_1=a_1=1$. In this case, $T_3$ is the empty map and $T_1$, $T_2$ and $S$ become
$$
T_1(\gamma,\mathfrak{f}_0)=d\mathfrak{f}_0+(\Ide-A)\gamma,\quad T_2(\gamma,\mathfrak{f}_0)=N(A)\mathfrak{f}_0\quad\text{and}\quad   S(t)=(dt,At-t),
$$
where $N(A)\coloneqq \sum_{\ell=0}^{d-1}A^{\ell}$. This can be simplified in some cases:
\begin{enumerate}

\smallskip

\item[a)] If there exists $c\in \mathds{Z}$ such that $c(A-\Ide)= d\Ide$, then we define $\Phi\colon I^2\to I^2$ by $\Phi(x,y)\coloneqq (x+cy,y)$, and we set $\wt{S}\coloneqq \Phi^{-1}\xcirc S$, $\wt{T}_1\coloneqq T_1\xcirc \Phi$ and $\wt{T}_2\coloneqq T_2\xcirc \Phi$. Clearly $\Phi$ induces an iso\-morphism
\begin{equation}\label{Phi barra1}
\ov{\Phi}\colon \frac{\ker \wt{T}_1\cap \ker \wt{T}_2}{\ima \wt{S}}\longrightarrow \frac{\ker T_1\cap \ker T_2}{\ima S}.
\end{equation}
A straightforward computation shows that,
$$
\wt{T}_1(x,y)=(\Ide-A)x, \quad \wt{T}_2(x,y)=N(A)y\quad\text{and}\quad \wt{S}(t)=(0,At-t).
$$
From Corollary~\ref{calculo de la homo B'}, if follows that
\begin{equation*}
\ho_{\blackdiamond}^2(H,I) \simeq \frac{\ker \wt{T}_1\cap \ker \wt{T}_2}{\ima \wt{S}} = \ker(A-\Ide)\oplus \frac{\ker(N(A))}{(A-\Ide)I} = \ho^0(\mathds{Z}_{d},I)\oplus \ho^1(\mathds{Z}_{d},I),
\end{equation*}
where the action of $\mathds{Z}_d=\langle \sigma \rangle$ on $I$ is given by $\sigma(y)\coloneqq A(y)$. Let
$$
\ov{T}_{\Phi}\colon \ker(A-\Ide)\oplus \frac{\ker(N(A))}{(A-\Ide)I}\longrightarrow \ho_{\blackdiamond}^2(H,I)
$$
be the isomorphism $\ov{T}_{\Phi}\coloneqq \ov{T}\xcirc\Phi$, where $\ov{T}$ is as in Corollary~\ref{calculo de la homo B'}. Clearly,
$\ov{T}_{\Phi}(\gamma,\mathfrak{f}_0) = (\alpha_{\gamma+c\mathfrak{f}_0},f_{\mathfrak{f}_0,\gamma+c\mathfrak{f}_0})$.

\smallskip

\item[b)] If there exists $c\in \mathds{Z}$ such that $A-\Ide = cd\Ide$, then we define $\Phi\colon I^2\to I^2$ by $\Phi(x,y)\coloneqq (x,y+cx)$, and we set $\wt{S}\coloneqq \Phi^{-1}\xcirc S$, $\wt{T}_1\coloneqq T_1\xcirc \Phi$ and $\wt{T}_2\coloneqq T_2\xcirc \Phi$. Clearly $\Phi$ induces an iso\-morphism
\begin{equation}\label{Phi barra}
\ov{\Phi}\colon \frac{\ker \wt{T}_1\cap \ker \wt{T}_2}{\ima \wt{S}}\longrightarrow \frac{\ker T_1\cap \ker T_2}{\ima S}.
\end{equation}
A straightforward computation shows that,
$$
\wt{T}_1(x,y)=dy, \quad \wt{T}_2(x,y)= N(A)(y+cx)\quad\text{and}\quad \wt{S}(t)=(dt,0).
$$
Note that if $(x,y)\in \ker \wt{T}_1$, then $(A-\Ide)y=cdy=0$. Hence $Ay=y$, and so $N(A)y=dy=0$. Therefore
\begin{equation}\label{acqua1}
\ho_{\blackdiamond}^2(H,I) \simeq \frac{\ker \wt{T}_1\cap \ker \wt{T}_2}{\ima \wt{S}} = \frac{\ker(cN(A))}{\ima(d\Ide)}\oplus \ker(d\Ide).
\end{equation}
Since $A=\Ide+cd\Ide$, we have
$$
\quad\qquad N(A) = \sum_{k=0}^{d-1}\sum_{\ell=0}^k\binom{k}{\ell}(cd)^{\ell}\Ide=\sum_{\ell=0}^{d-1}\sum_{k=\ell}^{d-1}\binom{k}{\ell}(cd)^{\ell}\Ide = \sum_{\ell=0}^{d-1}\binom{d}{\ell+1}(cd)^{\ell}\Ide,
$$
which can be useful for explicit calculations. Let
$$
\ov{T}_{\Phi}\colon \frac{\ker(cN(A))}{\ima(d\Ide)}\oplus \ker(d\Ide)\longrightarrow \ho_{\blackdiamond}^2(H,I)
$$
be the isomorphism $\ov{T}_{\Phi}\coloneqq \ov{T}\xcirc\Phi$, where $\ov{T}$ is as in Corollary~\ref{calculo de la homo B'}. Clearly,
$\ov{T}_{\Phi}(\gamma,\mathfrak{f}_0) = (\alpha_{\gamma},f_{\mathfrak{f}_0+c\gamma,\gamma})$.
\end{enumerate}
Note that if $c=0$ in case~a), or equivalently $dI=0$, then $\Phi = \Ide$, $\wt{T}_1 = T_1$, $\wt{T}_2 = T_2$, $\wt{S}=S$ and $\ov{T}_{\Phi} = T$. On the other hand, if $c=0$ in case~b), or equivalently $A=\Ide$, then again $\Phi = \Ide$, $\wt{T}_1 = T_1$, $\wt{T}_2 = T_2$, $\wt{S}=S$ and $\ov{T}_{\Phi} = T$.
\end{example}

\section[An application]{An application}\label{section 4}

Let $H\coloneqq \mathds{Z}_{p^{\eta}}$, endowed with the trivial linear cycle set structure. In this section, we explicitly compute all ex\-tension classes of $(\iota,I\times_{\beta,f}^{\blackdiamond,\Yleft} H,\pi)$, of $H$ by a finite cyclic $p$-group $I$, under the assumption that $\Yleft=0$ (according to~\cite{GGV1}*{Re\-mark~5.13}, this condition holds if and only if $\iota(I) \subseteq \Soc (I\times_{\beta,f}^{\blackdiamond,\Yleft} H)$). By Proposition~\ref{caracterizacion}, this requires computing all the endomorphisms $A\colon I\to I$, such that $A^{p^{\eta}} = \Ide$, and then, for each such endomorphism, determining the~coho\-mology group $\ho_{\blackdiamond}^2(H,I)$. For this latter task, we rely on Example~\ref{ejemplo}. In each case we examine, we must~com\-pute the maps $T_1$, $T_2$ and $S$, considered in that example, in order to obtain a family of pairs
$$
(\gamma,\mathfrak{f}_0)\in \ker(T_1)\cap \ker(T_2),
$$
which parametrizes a complete set of representatives of $2$-cocycles modulo coboundaries in ${\mathcal{C}}_{\blackdiamond}(H,I)$, via the correspondence $T(\gamma,\mathfrak{f}_0) = (\alpha_{\gamma},-f_{\mathfrak{f}_0})$, where, as we saw above Theorem~\ref{formula para f}, we write $f_{\mathfrak{f}_0}$ instead of $f_{\mathfrak{f}_0,\gamma}$. In each example, the parame\-ter family $(\gamma,\mathfrak{f}_0)$ is in bijective correspondence with a set of pairs $(z_1,z_2)$, where $z_1$ and $z_2$ belong to subquotients of~$\mathds{Z}_{p^r}$.

\smallskip

\begin{note} For the sake of simplicity in all cases we express the representative pair $(\gamma,\mathfrak{f}_0)$ as a function of the parameters $z_1$ and $z_2$. For example, the statement  ``In this case $(\gamma,\mathfrak{f}_0)\in\{(p^{r-\eta}z_1,z_2):0\le z_1,z_2<p^{\eta}\}$'' should be understood as ``In this case, a complete family $(\alpha_{\gamma},f_{\mathfrak{f}_0})$, of representatives of $2$-cocycles modulo coboundaries in ${\mathcal{C}}_{\blackdiamond}(H,I)$, is parameterized by the set of pairs $(\gamma,\mathfrak{f}_0)\in \{(p^{r-\eta}z_1,z_2):0\le z_1,z_2<p^{\eta}\}$''.
\end{note}

Next, we establish two lemmas needed to compute the endomorphisms $A\colon I\to I$, satisfying $A^{p^{\eta}} = \ide$.

\begin{lemma}\label{para ejemplos} Let $p\in \mathds{N}$ be an odd prime and let $r\ge 1$. Let $0\le a<p^r$ and $\eta\in \mathds{N}$. Then
\begin{equation*}
a^{p^{\eta}}\equiv 1\pmod{p^r} \Longleftrightarrow a^{p^{\eta_0}}\equiv 1\pmod{p^r} \Longleftrightarrow  a\equiv 1\pmod{p^{r-\eta_0}},
\end{equation*}
where $\eta_0\coloneqq \min(r-1,\eta)$.
\end{lemma}

\begin{proof} If $a\notin U(\mathds{Z}_{p^r})$, none of the conditions are met. Else, by Euler Theorem, $a^{(p-1)p^{r-1}}\equiv 1\pmod{p^r}$. Hence, $a^{p^{\eta}}\equiv 1\pmod{p^r}$ if and only if $a^{p^{\eta_0}}\equiv 1\pmod{p^r}$. When $r=1$, this finishes the proof. So we assume that $r>1$. Let~$s$ be a generator of $U(\mathds{Z}_{p^r})$ and write $\ov{a} = s^l$, where $\ov{a}$ is the class of $a$ in $\mathds{Z}_{p^r}$. Note that $a^{p^{\eta_0}} \equiv 1 \pmod{p^r}$ if and only if $(p-1)p^{r-1-\eta_0}\mid l$. Let $\hat{a}$ and $\hat{s}$ be the classes in $\mathds{Z}_{p^{r-1}}$, of $a$ and $s$, respectively. Since $\hat{s}$ is a generator of $U(\mathds{Z}_{p^{r-1}})$ and $\hat{a} = \hat{s}^l$, the same argument as above proves that $(p-1)p^{(r-2)-(\eta_0-1)}= (p-1)p^{r-1-\eta_0}\mid l$ if and only if $a^{p^{\eta_0-1}}\equiv 1\pmod{p^{r-1}}$. An inductive argument concludes the proof.
\end{proof}

\begin{remark}\label{complemento para ejemplos} The previous lemma and its proof hold for $p=2$ when $1\le r\le 2$ (because then $U(\mathds{Z}_{2^r})$ is cyclic).
\end{remark}

\begin{lemma}\label{para ejemploscon2} Let $r>2$, $0\le a<2^r$ and $\eta\in \mathds{N}$. Then
\begin{equation*}
a^{2^{\eta}}\equiv 1\pmod{2^r} \Longleftrightarrow a^{2^{\eta_0}}\equiv 1\pmod{2^r} \Longleftrightarrow  a^2\equiv 1\pmod{2^{r+1-\eta_0}},
\end{equation*}
where $\eta_0\coloneqq \min(r-2,\eta)$.
\end{lemma}

\begin{proof}  If $a\notin U(\mathds{Z}_{2^r})$, none of the conditions are met. Else $a^{2^{r-2}}\equiv 1\pmod{2^r}$, and so $a^{2^{\eta}}\equiv 1\pmod{2^r}$ if and only if $a^{2^{\eta_0}}\equiv 1\pmod{2^r}$. When $r=3$, this finishes the proof. So we assume that $r>3$. Let $s\in U(\mathds{Z}_{2^r})$ be such that $U(\mathds{Z}_{2^r}) = \{s^i,-s^i:0\le i<2^{r-2}\}$. Write $\ov{a} = \pm s^l$, where $\ov{a}$ is the class of $a$ in $\mathds{Z}_{2^r}$. Since $\eta_0\ge 1$, we have $a^{2^{\eta_0}} \equiv 1\pmod{2^r}$ if and only if $2^{r-2-\eta_0}\mid l$. Let $\hat{a}$ and $\hat{s}$ be the classes in $\mathds{Z}_{2^{r-1}}$, of $a$ and $s$, respectively. Since $U(\mathds{Z}_{2^{r-1}}) = \{\hat{s}^i,-\hat{s}^i:0\le i<2^{r-3}\}$ and $\hat{a} = \pm\hat{s}^l$, the same argument as above proves that $2^{(r-3)-(\eta_0-1)}= 2^{r-2-\eta_0}\mid l$ if and only if $a^{2^{\eta_0-1}}\equiv 1\pmod{2^{r-1}}$. An inductive argument concludes the proof.
\end{proof}

\begin{remark}\label{A para ejemploscon2} It is well known that $a^2\equiv 1\pmod{2^{r+1-\eta_0}}$ if and only if $a\equiv \pm 1\pmod{2^{r-\eta_0}}$.
\end{remark}

\subsection[Case \texorpdfstring{$H=\mathds{Z}_{p^{\eta}}$}{H=Zpn} and \texorpdfstring{$I=\mathds{Z}_{p^r}$}{I=Zpr} with \texorpdfstring{$r\le \eta$}{r<=n}, if \texorpdfstring{$p$}{p} is odd, and \texorpdfstring{$r\le \min(2,\eta)$}{r<= min(2,n)}, if \texorpdfstring{$p=2$}{p=2}]{Case \texorpdfstring{$\pmb{H=\mathds{Z}_{p^{\eta}}}$}{H=Zpn} and \texorpdfstring{$\pmb{I=\mathds{Z}_{p^r}}$}{I=Zpr} with \texorpdfstring{$\pmb{r\le \eta}$}{r<=n}, if \texorpdfstring{$\pmb{p}$}{p} is odd, and \texorpdfstring{$\pmb{r\le \min(2,\eta)}$}{r<= min(2,n)}, if \texorpdfstring{$\pmb{p=2}$}{p=2}}

\begin{proposition} Let $H\coloneqq\left(\mathds{Z}_{p^{\eta}},+\right)$, endowed with the trivial cycle set structure, and let $I\coloneqq \mathds{Z}_{p^r}$. Assume~$r\le \eta$, if~$p$ is odd, and $r\le \min(2,\eta)$, if $p=2$. Then, there exists $0\le k<p^{r-1}$, such that  $h\blackdiamond y =  (1+pk)^h y$. If $k\ne 0$, then we write $k=p^us$ with $0\le u<r-1$ and $p\nmid s$. Under these conditions two distinct cases arise:

\begin{description}[font=\normalfont\scshape, leftmargin=0cm]

\item[\underline{$k=0$}] In this case $(\gamma,\mathfrak{f}_0)\in\{(z_1,z_2):0\le z_1,z_2<p^r\}$. Moreover, $h\blackdiamond y = y$ and $f_{\mathfrak{f}_0}(h,h')=\mathfrak{f}_0 hh'$.

\item[\underline{$k\ne 0$}] In this case $(\gamma,\mathfrak{f}_0)\in\{(p^{r-u-1}z_1,z_2):0\le z_1,z_2<p^{u+1}\}$. Moreover,
$$
h\blackdiamond y = \sum_{i=0}^h \binom{h}{i} p^ik^iy\quad\text{and}\quad f_{\mathfrak{f}_0}(h,h') = \sum_{\ell=0}^{h-1} \binom{h}{\ell+1} p^{(u+1)\ell}s^{\ell} \mathfrak{f}_0 h'.
$$

\end{description}
\end{proposition}

\begin{proof} Let $A$ be as in Example~\ref{ejemplo}. Since~$A$ is the multiplication by an invertible element $a\in \mathds{Z}_{p^r}$ and $A^{p^{\eta}} = \Ide$, we know that $a = 1+pk$, with $0\le k< p^{r-1}$ (see Lemma~\ref{para ejemplos} and Remark~\ref{complemento para ejemplos}). From the first formula in~\eqref{acciones caso trivial}, we obtain the formula for $h\blackdiamond y$; while, from equality~\eqref{zazaza1}, we derive
\begin{equation}\label{cociclo1 ej 3}
f_{\mathfrak{f}_0}(h,h') =\sum_{j=0}^{h-1} (1+pk)^j \mathfrak{f}_0h'= \sum_{j=0}^{h-1} \sum_{\ell=0}^j\binom{j}{\ell} p^{\ell}k^{\ell}\mathfrak{f}_0h'= \sum_{\ell=0}^{h-1} \sum_{j=\ell}^{h-1} \binom{j}{\ell} p^{\ell}k^{\ell}\mathfrak{f}_0h' = \sum_{\ell=0}^{h-1} \binom{h}{\ell+1} p^{\ell}k^{\ell} \mathfrak{f}_0h'.
\end{equation}
We are in the case~a) of Example~\ref{para ejemplos}, with $c=0$, since $p^{\eta}I = 0$. We claim that $N(A) = 0$. Indeed, a direct~computa\-tion yields
\begin{equation*}
N(A) = \sum_{j=0}^{p^{\eta}-1} (1+pk)^j = \sum_{j=0}^{p^{\eta}-1}\sum_{\ell=0}^j \binom{j}{\ell}p^{\ell}k^{\ell} = \sum_{\ell=0}^{p^{\eta}-1}\sum_{j=\ell}^{p^{\eta}-1} \binom{j}{\ell}p^{\ell}k^{\ell} = \sum_{\ell=0}^{p^{\eta}-1}\binom{p^{\eta}}{\ell+1} p^{\ell}k^{\ell}.
\end{equation*}
Write $\ell+1 = p^tv$ with $p\nmid v$ and $t\ge 0$. It is well known that $p^{\eta-t}\mid \binom{p^{\eta}}{\ell+1}$. Hence, the exponent of $p$ in $\binom{p^{\eta}}{\ell+1} p^{\ell}$, is greater than or equal to $\eta-t+\ell$. Consequently $N(A) = 0$, because $\eta-t+\ell\ge \eta\ge r$, since $\ell\ge t$ and $r\le \eta$. Hence,
$$
\frac{\ker T_1\cap \ker T_2}{\ima S} = \ker(A-\Ide)\oplus \frac{\ker(N(A))}{(A-\Ide)I} = \begin{cases} \mathds{Z}_{p^r}\oplus \mathds{Z}_{p^r} & \text{if $k=0$,}\\ p^{r-u-1}\mathds{Z}_{p^r}\oplus\frac{\mathds{Z}_{p^r}}{p^{u+1}\mathds{Z}_{p^r}} & \text{if $k=p^us$ with $0\le u< r-1$ and $p\nmid s$.}\end{cases}
$$
From this and~\eqref{cociclo1 ej 3}, the conditions in the statement follow immediately.
\end{proof}

\subsection[Case \texorpdfstring{$H=\mathds{Z}_{p^{\eta}}$}{H=Zpn} and \texorpdfstring{$I=\mathds{Z}_{p^r}$}{I=Zpr} with \texorpdfstring{$r>\eta$}{r>n}, if \texorpdfstring{$p$}{p} is odd, and \texorpdfstring{$(\eta,r)=(1,2)$}{(n,r)=(1,2)}, if \texorpdfstring{$p=2$}{p=2}]{Case \texorpdfstring{$\pmb{H=\mathds{Z}_{p^{\eta}}}$}{H=Zpn} and \texorpdfstring{$\pmb{I=\mathds{Z}_{p^r}}$}{I=Zpr} with \texorpdfstring{$\pmb{r>\eta}$}{r>n}, if \texorpdfstring{$\pmb{p}$}{p} is odd, and \texorpdfstring{$\pmb{(\eta,r)=(1,2)}$}{(n,r)=(1,2)}, if \texorpdfstring{$\pmb{p=2}$}{p=2}}

\begin{proposition}\label{prop 4.10} Let $H\coloneqq\left(\mathds{Z}_{p^{\eta}},+\right)$, endowed with the trivial cycle set structure, and let $I\coloneqq \mathds{Z}_{p^r}$. Assume $\eta<r$ if $p$ is odd, and $(\eta,r) = (1,2)$ if $p=2$. Then, there exists an integer $0\le k<p^{\eta}$ such that $h\blackdiamond y = (1+p^{r-\eta}k)^h y$. If~$k\ne 0$, then we write $k=p^us$, with $0\le u<\eta$ and $p\nmid s$. Under these conditions four distinct cases arise:
\begin{description}[font=\normalfont\scshape, leftmargin=0cm]

\item[\underline{$k=0$}] In this case $(\gamma,\mathfrak{f}_0)\in \{(z_1,p^{r-\eta}z_2):0\le z_1,z_2<p^{\eta}\}$. Moreover, $h\blackdiamond y = y$ and  $f_{\mathfrak{f}_0}(h,h') = \mathfrak{f}_0hh'$.

\item[\underline{$p\ne 2$, $k\ne 0$ and $r+u\ge 2\eta$}] In this case
\begin{equation}\label{caso p>2, cond ppal}
(\gamma,\mathfrak{f}_0)\in\{(p^{\eta-u}z_1,p^{r-\eta}(z_2+sz_1)):0\le z_1<p^u\text{ and } 0\le z_2<p^{\eta}\}.
\end{equation}
Moreover,
\begin{equation}\label{caso p>2,...}
h \blackdiamond y = y+p^{r-\eta+u}shy \quad\text{and}\quad f_{\mathfrak{f}_0}(h,h') = \mathfrak{f}_0hh'.
\end{equation}

\item[\underline{$p=2$, $k\ne 0$ and $r+u\ge 2\eta$}] In this case $k=1$ and $(\gamma,\mathfrak{f}_0)\in \{(z_1,2z_2+z_1):0\le z_1, z_2<2\}$. Moreover,
\begin{equation}\label{caso p=2,...}
h \blackdiamond y = (1+2h)y \quad\text{and}\quad f_{\mathfrak{f}_0}(h,h') = \mathfrak{f}_0hh'.
\end{equation}

\item[\underline{$k\ne 0$ and $r+u< 2\eta$}] In this case $p\ne 2$ and
\begin{equation}\label{caso r+u<2eta,...form ppal}
(\gamma,\mathfrak{f}_0)\in\{(p^{\eta-u}(z_1+z_2),p^{r-\eta}sz_2): 0\le z_1<p^{r-\eta+u} \text{ and } 0\le z_2<p^u\}.
\end{equation}
More\-over,
\begin{equation}\label{caso r+u<2eta,...}
h \blackdiamond y =\sum_{i=0}^h \binom{h}{i} p^{(r-\eta+u)i}s^iy \quad\text{and}\quad f_{\mathfrak{f}_0}(h,h') =\sum_{\ell=0}^{h-1} p^{(r-\eta+u)\ell} s^{\ell} \binom{h}{\ell+1} \mathfrak{f}_0h'.
\end{equation}
\end{description}
\end{proposition}

\begin{proof} Let $A$ be as in Example~\ref{ejemplo}. Since~$A$ is the multiplication by an invertible element $a\in \mathds{Z}_{p^r}$ and $A^{p^{\eta}} = \Ide$, by Lemma~\ref{para ejemplos} and Remark~\ref{complemento para ejemplos}, we know that $a = 1+p^{r-\eta}k$, with $0\le k<p^{\eta}$. From~\eqref{acciones caso trivial}, we obtain that
\begin{equation}\label{accion1 ej 4}
h \blackdiamond y = (1+p^{r-\eta}k)^h y = \sum_{i=0}^h \binom{h}{i} p^{(r-\eta)i}k^iy;
\end{equation}
while, from equality~\eqref{zazaza1}, we obtain that
\begin{equation}\label{cociclo1 ej 4}
f_{\mathfrak{f}_0}(h,h') = \sum_{j=0}^{h-1}a^j \mathfrak{f}_0h' = \sum_{j=0}^{h-1} \sum_{\ell=0}^j \binom{j}{\ell} p^{(r-\eta)\ell}k^{\ell}\mathfrak{f}_0h'= \sum_{\ell=0}^{h-1} \binom{h}{\ell+1} p^{(r-\eta)\ell}k^{\ell}\mathfrak{f}_0h'.
\end{equation}
Assume first that $k=0$. Then, we are in the case~b) of Example~\ref{ejemplo} with $c=0$ and we have $N(A) = p^{\eta}$.~Conse\-quent\-ly, by~\eqref{acqua1}
\begin{equation*}
\ho_{\blackdiamond}^2(H,I) \simeq \frac{\ker T_1\cap \ker T_2}{\ima S} = \frac{\ker(cN(A))}{\ima(p^{\eta}\Ide)}\oplus \ker(p^{\eta}\Ide) = \frac{\mathds{Z}_{p^r}}{p^{\eta}\mathds{Z}_{p^r}}\oplus p^{r-\eta}\mathds{Z}_{p^r}.
\end{equation*}
The present case of the statement follows from this, \eqref{accion1 ej 4} and~\eqref{cociclo1 ej 4}. Assume now that $k\ne 0$ and $r+u\ge 2\eta$. Then, we are again in the case~b) of Example~\ref{ejemplo}, with~$c=p^{r+u-2\eta}s$ and
$$
N(A) = \sum_{\ell=0}^{p^{\eta}-1}\binom{p^{\eta}}{\ell+1}p^{(r+u-\eta)\ell}s^{\ell}.
$$
If $p=2$, then $\eta=1$, $r=2$, $k=1$, and a direct computation gives $c=1$ and $N(A) = \binom{2}{1} + \binom{2}{2}2 = 4 = 0$. Hence,
\begin{equation*}
\ho_{\blackdiamond}^2(H,I) \simeq \frac{\ker \wt{T}_1\cap \ker \wt{T}_2}{\ima \wt{S}} = \frac{\ker(cN(A))}{\ima(2\Ide)}\oplus \ker(2\Ide) = \frac{\mathds{Z}_4}{2\mathds{Z}_4}\oplus 2\mathds{Z}_4.
\end{equation*}
The fact that $(\gamma,\mathfrak{f}_0)\in \{(z_1,2z_2+z_1):0\le z_1, z_2<2\}$ follows from this, applying the map $\ov{\Phi}$ of \eqref{Phi barra}; the first equality in~\eqref{caso p=2,...}, follows from~\eqref{accion1 ej 4}; and, the second one, from~\eqref{cociclo1 ej 4}. As\-sume now that $p$ is odd and write $\ell+1 = p^tv$ with~$p\nmid v$ and $t\ge 0$. It is well known that $p^{\eta-t}\mid \binom{p^{\eta}}{\ell+1}$. Consequently, the exponent $w$, of $p$ in $\binom{p^{\eta}}{\ell+1} p^{(r+u-\eta)\ell}$, is greater than or equal to $\eta-t+(r+u-\eta)\ell$. Thus, if $\ell\ge 1$, then
\begin{equation*}
w\ge \eta-t+(r+u-\eta)\ell = r-t+u+(r+u-\eta)(\ell-1)\ge r+u-t + \ell-1\ge r,
\end{equation*}
where we have used that $\ell-1 = vp^t-2\ge t$, for $\ell\ge 1$. Hence, in this case, $p^r\mid \binom{p^{\eta}}{\ell+1}p^{(r+u-\eta)\ell}$, and so $N(A) = p^{\eta}$. Thus, $cN(A) = p^{r+u-\eta}s$, and so
\begin{equation*}
\ho_{\blackdiamond}^2(H,I) \simeq \frac{\ker \wt{T}_1\cap \ker \wt{T}_2}{\ima \wt{S}} = \frac{\ker(cN(A))}{\ima(p^{\eta}\Ide)}\oplus \ker(p^{\eta}\Ide) = \frac{p^{\eta-u}\mathds{Z}_{p^r}}{p^{\eta}\mathds{Z}_{p^r}}\oplus p^{r-\eta}\mathds{Z}_{p^r}.
\end{equation*}
Applying the map $\ov{\Phi}$ of~\eqref{Phi barra}, we obtain~\eqref{caso p>2, cond ppal}. Moreover, since $2(r-\eta)+u\ge r$, formulas~\eqref{accion1 ej 4} and~\eqref{cociclo1 ej 4}~be\-come
\begin{align*}
&h \blackdiamond y = \sum_{i=0}^h \binom{h}{i} p^{(r-\eta+u)i} s^iy=y+hp^{r-\eta+u}sy
\shortintertext{and}
&f_{\mathfrak{f}_0}(h,h') = \sum_{\ell=0}^{h-1} \binom{h}{\ell+1} p^{(r-\eta)(\ell+1)+u\ell}s^{\ell}p^{r-\eta}(z_2+sz_1)h' = p^{r-\eta}(z_2+sz_1)hh' = \mathfrak{f}_0hh',
\end{align*}
which gives~\eqref{caso p>2,...}. Assume finally that $k\ne 0$ and that $r+u<2\eta$ (thus $(\eta,r)\ne (1,2)$, which implies $p\ne 2$). Clearly, we are in the case~a) of Example~\ref{ejemplo}, with $c=p^{2\eta-r-u}s'$, where $s'\in\mathds{N}_0$ is the unique element such that $s'<p^r$ and~$ss'\equiv 1\pmod{p^r}$. Computing, we obtain
\begin{equation*}
N(A) = \sum_{j=0}^{p^{\eta}-1} (1+p^{r+u-\eta}s)^j = \sum_{j=0}^{p^{\eta}-1}\sum_{\ell=0}^j \binom{j}{\ell}p^{(r+u-\eta)\ell}s^{\ell} = \sum_{\ell=0}^{p^{\eta}-1}\sum_{j=\ell}^{p^{\eta}-1} \binom{j}{\ell} p^{(r+u-\eta)\ell}s^{\ell} = \sum_{\ell=0}^{p^{\eta}-1}\binom{p^{\eta}}{\ell+1} p^{(r+u-\eta)\ell}s^{\ell}.
\end{equation*}
Write $\ell+1 = p^tv$, with $p\nmid v$ and $t\ge 0$. It is well known that $p^{\eta-t}\mid \binom{p^{\eta}}{\ell+1}$. So, the exponent $w$, of $p$ in $\binom{p^{\eta}}{\ell+1} p^{(r+u-\eta)\ell}$, is greater than or equal to $\eta-t+(r+u-\eta)\ell$. Thus, if $\ell\ge 1$, then
\begin{equation*}
w\ge \eta-t+(r+u-\eta)\ell = r-t+u+(r+u-\eta)(\ell-1)\ge r+u-t + \ell-1\ge r,
\end{equation*}
where we have used that $r+u-\eta>0$ and that $\ell-1 = vp^t-2\ge t$, for $\ell\ge 1$. Consequently, $N(A) = p^{\eta}$, and so
\begin{equation*}
\ho_{\blackdiamond}^2(H,I) \simeq \frac{\ker \wt{T}_1\cap \ker \wt{T}_2}{\ima \wt{S}} = \ker(A-\Ide)\oplus \frac{\ker(N(A))}{(A-\Ide)I} = p^{\eta-u}\mathds{Z}_{p^r}\oplus\frac{p^{r-\eta}\mathds{Z}_{p^r}}{p^{r+u-\eta}\mathds{Z}_{p^r}}.
\end{equation*}
Applying the map $\ov{\Phi}$ of~\eqref{Phi barra1}, we obtain~\eqref{caso r+u<2eta,...form ppal}. Finally, the formulas in~\eqref{caso r+u<2eta,...} follow from~\eqref{accion1 ej 4} and~\eqref{cociclo1 ej 4}.
\end{proof}

\subsection[Case \texorpdfstring{$H=\mathds{Z}_{2^{\eta}}$}{H=Z2n} and \texorpdfstring{$I=\mathds{Z}_{2^r}$}{Z2r} with \texorpdfstring{$3\le r\le \eta+1$}{3<=r<=n+1}]{Case \texorpdfstring{$\pmb{H=\mathds{Z}_{2^{\eta}}}$}{H=Z2n} and \texorpdfstring{$\pmb{I=\mathds{Z}_{2^r}}$}{Z2r} with \texorpdfstring{$\pmb{3\le r\le \eta+1}$}{3<=r<=n+1}}

\begin{proposition}\label{prop 3.12} Let $H\coloneqq\left(\mathds{Z}_{2^{\eta}},+\right)$, equipped with the trivial cycle set structure, and let $I\coloneqq \mathds{Z}_{2^r}$, where we assume that $3\le r\le \eta+1$. Then, there exists an element $a = \pm (1+4k)$, with $0\le k<2^{r-2}$, such that $h\blackdiamond y = a^h y$. When~$k\ne 0$, we write $k=2^us$, with $0\le u<r-2$ and $2\nmid s$. Under these conditions, four distinct cases arise:
\begin{description}[font=\normalfont\scshape, leftmargin=0cm]

\item[\underline{$a=1$}] In this case, $k=0$ and
\begin{equation}\label{k=0, a=1, caso par}
(\gamma,\mathfrak{f}_0)\in\begin{cases} \{(z_1,z_2) : 0\le z_1,z_2<2^r\} & \text{if $r\le \eta$,}\\ \{(z_1,2z_2) : 0\le z_1,z_2<2^{r-1}\} & \text{if $r = \eta+1$.}\end{cases}
\end{equation}
Moreover,
\begin{equation}\label{k=0, a=1, caso par'''}
h\blackdiamond y = y \quad\text{and}\quad f_{\mathfrak{f}_0}(h,h')=\mathfrak{f}_0hh'.
\end{equation}

\item[\underline{$a=-1$}] In this case, $k=0$ and $(\gamma,\mathfrak{f}_0)\in\{(2^{r-1}z_1-2^{\eta-1}z_2,z_2) :  0\le z_1,z_2<2\}$. Moreover,
\begin{equation}\label{k=0, a=-1, caso par}
h\blackdiamond y = (-1)^h y \quad\text{and}\quad f_{f_0}(h,h') = \begin{cases} f_0h'  & \text{if $h$ is odd,}\\ 0 & \text{if $h$ is even.}\end{cases}
\end{equation}

\item[\underline{$a=1+4k$ with $k\ne 0$}] Then,
\begin{equation}\label{a=1+4k with k ne 0}
(\gamma,\mathfrak{f}_0)\in \begin{cases} \{(2^{r-u-2}z_1,z_2):0\le z_1,z_2<2^{u+2}\} &\text{if $r\le\eta$,}\\
\{(2^{r-u-2}z_1+2^{\eta-u-1}z_2,2z_2) : 0\le z_1<2^{u+2} \text{ and } 0\le z_2<2^{u+1}\} &\text{if $r=\eta+1$.}\end{cases}
\end{equation}
Moreover,
\begin{equation}\label{a=1+4k with k ne 0'}
h\blackdiamond y = \sum_{i=0}^h \binom{h}{i} 4^ik^iy \quad\text{and}\quad f_{\mathfrak{f}_0}(h,h') =\sum_{\ell=0}^{h-1} \binom{h}{\ell+1}4^{\ell}k^{\ell}\mathfrak{f}_0h'.
\end{equation}

\item[\underline{$a=-1-4k$ with $k\ne 0$}] Then, $(\gamma,\mathfrak{f}_0)\in\{(2^{r-1}z_1-2^{\eta-1}z_2,z_2):0\le z_1, z_2<2\}$. Moreover,
\begin{equation}\label{a=-1k4k with k ne 0'}
h\blackdiamond y = (-1)^h \sum_{i=0}^h \binom{h}{i} 4^ik^iy \quad\text{and}\quad f_{f_0}(h,h') =\sum_{\ell=0}^{h-1} A_{h\ell} 4^{\ell}k^{\ell}f_0h',
\end{equation}
where $A_{h\ell}\coloneqq \sum_{j=\ell}^{h-1} (-1)^{j} \binom{j}{\ell}$.
\end{description}
\end{proposition}

\begin{proof} Let $A$ be as in Example~\ref{ejemplo}. By Lemma~\ref{para ejemploscon2} and Remark~\ref{A para ejemploscon2}, since~$A$ is the multiplication by an~in\-vert\-ible element $a\in \mathds{Z}_{2^r}$ and $A^{p^{\eta}} = \Ide$, we know that $a = \pm (1+4k)$, with $0\le k<2^{r-2}$. By~\eqref{acciones caso trivial}, since $1^{\times h}=h$, we have:
\begin{equation}\label{calculo diamante p=2 ej 6}
h \blackdiamond y =\begin{cases} (1+4k)^h y = \sum_{i=0}^h \binom{h}{i} 4^ik^iy & \text{if $a = 1+4k$,}\\ (-1)^h(1+4k)^h y = (-1)^h\sum_{i=0}^h \binom{h}{i} 4^ik^iy & \text{if $a = -1-4k$;}\end{cases}
\end{equation}
while, from equality~\eqref{zazaza1}, we obtain
\begin{equation*}
f_{\mathfrak{f}_0}(h,h') = h'\sum_{j=0}^{h-1}a^j \mathfrak{f}_0,
\end{equation*}
Assume first that $a=1+4k$. Then, $a^j = \sum_{\ell=0}^j \binom{j}{\ell}4^{\ell}k^{\ell}$, and so
\begin{equation}\label{cociclo1caso1 ej 6}
f_{\mathfrak{f}_0}(h,h') = \sum_{j=0}^{h-1} \sum_{\ell=0}^j \binom{j}{\ell} 4^{\ell}k^{\ell}\mathfrak{f}_0h' = \sum_{\ell=0}^{h-1} \sum_{j=\ell}^{h-1} \binom{j}{\ell} 4^{\ell}k^{\ell}\mathfrak{f}_0h'= \sum_{\ell=0}^{h-1} \binom{h}{\ell+1} 4^{\ell}k^{\ell}\mathfrak{f}_0h'.
\end{equation}
Assume now that $a = -1-4k$. Then, $a^j = \sum_{\ell=0}^j (-1)^j\binom{j}{\ell}4^{\ell}k^{\ell}$, and so
\begin{equation}\label{cociclo1caso2 ej 6}
f_{\mathfrak{f}_0}(h,h') = \sum_{j=0}^{h-1} (-1)^j\sum_{\ell=0}^{j} \binom{j}{\ell} 4^{\ell}k^{\ell}\mathfrak{f}_0h' = \sum_{\ell=0}^{h-1}\sum_{j=\ell}^{h-1} (-1)^j\binom{j}{\ell} 4^{\ell}k^{\ell}\mathfrak{f}_0h'= \sum_{\ell=0}^{h-1} A_{h\ell} 4^{\ell}k^{\ell}\mathfrak{f}_0h'.
\end{equation}
Next we compute the cohomology. Assume first that $a=1$. Then, we are in the case~b) of Example~\ref{ejemplo}, with $c=0$, and we have $N(A) = 2^{\eta}$. So,
\begin{equation*}
\ho_{\blackdiamond}^2(H,I)\simeq \frac{\ker T_1\cap \ker T_2}{\ima S} = \frac{\ker(cN(A))}{\ima(2^{\eta}\Ide)}\oplus \ker(2^{\eta}\Ide) = \begin{cases} \mathds{Z}_{2^r} \oplus \mathds{Z}_{2^r} &\text{if $r\le \eta$,}\\ \frac{\mathds{Z}_{2^r}}{2^{r-1}\mathds{Z}_{2^r}}\oplus 2\mathds{Z}_{2^r} &\text{if $r=\eta+1$.}\end{cases}
\end{equation*}
The assertion in~\eqref{k=0, a=1, caso par} follows from this; the first assertion in~\eqref{k=0, a=1, caso par'''}, from~\eqref{calculo diamante p=2 ej 6}; and, the second one, from~\eqref{cociclo1caso1 ej 6}. Assume now that $a=-1$. Then, we are in the case~a) of Example~\ref{ejemplo}, with $c=-2^{\eta-1}$, and we have $N(A) = 0$. Hence,
\begin{equation*}
\ho_{\blackdiamond}^2(H,I) \simeq \frac{\ker \wt{T}_1\cap \ker \wt{T}_2}{\ima \wt{S}} = \ker(A-\Ide)\oplus \frac{\ker(N(A))}{(A-\Ide)I} = 2^{r-1} \mathds{Z}_{2^r} \oplus \frac{\mathds{Z}_{2^r}}{2\mathds{Z}_{2^r}}.
\end{equation*}
The fact that $(\gamma,\mathfrak{f}_0)\in\{(2^{r-1}z_1-2^{\eta-1}z_2,z_2):0\le z_1,z_2<2\}$ follows from this, applying the map $\ov{\Phi}$ of~\eqref{Phi barra1}; the first assertion in~\eqref{k=0, a=-1, caso par}, follows from~\eqref{calculo diamante p=2 ej 6}; and, the second one, from~\eqref{cociclo1caso2 ej 6}. Assume now that $a = 1+4k$, with $k\ne 0$. Then, we are in the case~a) of Ex\-ample~\ref{ejemplo}, with $c=0$ if~$r\le \eta$, and $c=2^{\eta-u-2}$ if $r = \eta+1$. In fact, if $r\le \eta$, then this is clear, since $c(A-\ide) = 0 = 2^{\eta}$; while if~$r=\eta+1$, then we have $c(A-\ide) = 2^{\eta}s = 2^{\eta}$. Computing, we obtain
\begin{equation*}
N(A) = \sum_{j=0}^{2^{\eta}-1} (1+2^{u+2}s)^j = \sum_{j=0}^{2^{\eta}-1}\sum_{\ell=0}^j \binom{j}{\ell} 2^{(u+2)\ell}s^{\ell} = \sum_{\ell=0}^{2^{\eta}-1} \sum_{j=\ell}^{2^{\eta}-1} \binom{j}{\ell} 2^{(u+2)\ell}s^{\ell} = \sum_{\ell=0}^{2^{\eta}-1}\binom{2^{\eta}}{\ell+1} 2^{(u+2)\ell}s^{\ell}.
\end{equation*}
Write $\ell+1 = 2^tv$, with $2\nmid v$ and $t\ge 0$. It is well known that $2^{\eta-t}\mid \binom{2^{\eta}}{\ell+1}$. So, the exponent $w$, of $2$ in $\binom{2^{\eta}}{\ell+1} 2^{(u+2)\ell}$,~is greater than or equal to $\eta-t+(u+2)\ell$. Thus, if $\ell\ge 1$, then $w\ge \eta-t+(u+2)\ell\ge \eta+(u+1)\ell  \ge \eta +1 \ge r$, where we have used that $\ell \ge t$. Con\-sequently, $N(A) = 2^{\eta}$, and so
\begin{equation*}
\ho_{\blackdiamond}^2(H,I)\simeq \frac{\ker \wt{T}_1\cap \ker \wt{T}_2}{\ima \wt{S}} = \ker(A-\Ide)\oplus \frac{\ker(N(A))}{(A-\Ide)I} = \begin{cases} 2^{r-u-2}\mathds{Z}_{2^r}\oplus\frac{\mathds{Z}_{2^r}}{2^{u+2}\mathds{Z}_{2^r}} & \text{if $r\le\eta $,}\\ 2^{r-u-2}\mathds{Z}_{2^r}\oplus \frac{2\mathds{Z}_{2^r}}{2^{u+2} \mathds{Z}_{2^r}} & \text{if $r=\eta+1$.}
\end{cases}
\end{equation*}
Applying the map $\ov{\Phi}$ of~\eqref{Phi barra1}, we obtain~\eqref{a=1+4k with k ne 0}; while~\eqref{a=1+4k with k ne 0'} follows immediately from~\eqref{calculo diamante p=2 ej 6} and~\eqref{cociclo1caso1 ej 6}. Assume~final\-ly that $a = -1-4k$, with $k\ne 0$. Then, we are in the case~a) of Ex\-ample~\ref{ejemplo}, with $c=-2^{\eta-1}$. Indeed,~a~direct~com\-pu\-tation proves that $c(A-\Ide) = 2^{\eta}(1+2k) = 2^{\eta}$. Computing, we obtain
\begin{equation*}
N(A) = \sum_{j=0}^{2^{\eta}-1} (-1-4k)^j = \sum_{j=0}^{2^{\eta}-1}(-1)^j\sum_{\ell=0}^j \binom{j}{\ell} 4^{\ell}k^{\ell} = \sum_{\ell=0}^{2^{\eta}-1} \sum_{j=\ell}^{2^{\eta}-1} (-1)^j\binom{j}{\ell} 4^{\ell}k^{\ell} = \sum_{\ell=0}^{2^{\eta}-1} A_{2^{\eta}\ell} 4^{\ell}k^{\ell}.
\end{equation*}
But, by~\cite{Spivey}*{identity 217}, we have $A_{2^{\eta}\ell} = \sum_{i=1}^{\ell} \binom{2^{\eta}}{i}\bigl(\frac{-1}{2}\bigr)^{\ell+1-i}$. Clearly $A_{2^{\eta}0}=0$ and $A_{2^{\eta}1}=-2^{\eta-1}$.~More\-over, using that $2A_{2^{\eta},\ell+1} =  - A_{2^{\eta}\ell}- \binom{2^{\eta}}{\ell+1}$, for all $1\le \ell < 2^{\eta}-1$ (and an inductive argument), we obtain that
\begin{equation}\label{divisibilidad de B sub l}
2^{2\ell-\eta-2} A_{2^{\eta}\ell}\in \mathds{Z}\qquad\text{for $2\le \ell<2^{\eta}$.}
\end{equation}
Hence $4^{\ell}A_{2^{\eta}\ell} = 2^r 2^{2\ell - r}A_{2^{\eta}\ell} \in 2^r\mathds{Z}$, for $2\le \ell < 2^{\eta}$ (since $r\le \eta+1$), and so $N(A) = 2^{\eta+1}k = 0$. Consequently,
\begin{equation*}
\ho_{\blackdiamond}^2(H,I)\simeq \frac{\ker \wt{T}_1\cap \ker \wt{T}_2}{\ima \wt{S}} = \ker(A-\Ide)\oplus \frac{\ker(N(A))}{(A-\Ide)I} = 2^{r-1}\mathds{Z}_{2^r}\oplus\frac{\mathds{Z}_{2^r}}{2\mathds{Z}_{2^r}}.
\end{equation*}
Applying the map $\ov{\Phi}$ of~\eqref{Phi barra1}, we obtain that $(\gamma,\mathfrak{f}_0)\in\{(2^{r-1}z_1-2^{\eta-1}z_2,z_2) : 0\le z_1, z_2<2\}$; while~\eqref{a=-1k4k with k ne 0'} follows from~\eqref{calculo diamante p=2 ej 6} and~\eqref{cociclo1caso2 ej 6}.
\end{proof}

\subsubsection[Case \texorpdfstring{$H=\mathds{Z}_{2^{\eta}}$}{H=Z2n} and \texorpdfstring{$I=\mathds{Z}_{2^r}$}{Z2r} with \texorpdfstring{$r>\eta+1$}{r>n+1}]{Case \texorpdfstring{$\pmb{H=\mathds{Z}_{2^{\eta}}}$}{H=Z2n} and \texorpdfstring{$\pmb{I=\mathds{Z}_{2^r}}$}{I=Z2r} with \texorpdfstring{$\pmb{r>\eta+1}$}{r>n+1}}

\begin{proposition} Let $H\coloneqq\left(\mathds{Z}_{2^{\eta}},+\right)$, endowed with the trivial cycle set structure, and let $I\coloneqq \mathds{Z}_{2^r}$, where we assume that $r>\eta+1$. Then, there exists $a = \pm (1+2^{r-\eta}k)$ with $0\le k<2^{\eta}$, such that $h\blackdiamond y = a^h y$. Moreover, if $k\ne 0$, then we write $k=2^us$ with $0\le u<\eta$ and $2\nmid s$. Under these conditions, five distinct cases arise:
\begin{description}[font=\normalfont\scshape, leftmargin=0cm]

\item[\underline{$a=1$}] In this case $k=0$ and $(\gamma,\mathfrak{f}_0)\in\{(z_1,2^{r-\eta}z_2) : 0\le z_1,z_2<2^{\eta}\}$. Moreover,
\begin{equation}\label{k=0, a=1, caso par r>eta+1}
h\blackdiamond y = y\quad\text{and}\quad f_{f_0}(h,h') = \mathfrak{f}_0 hh'.
\end{equation}

\item[\underline{$a=-1$}] In this case $k=0$ and $(\gamma,\mathfrak{f}_0)\in\{(2^{r-1}z_1-2^{\eta-1}z_2,z_2):0\le z_1,z_2<2\}$. Moreover,
\begin{equation}\label{a=-1, caso par r>eta+1}
h\blackdiamond y = (-1)^h y \quad\text{and}\quad f_{\mathfrak{f}_0}(h,h') = \begin{cases} \mathfrak{f}_0h'  & \text{if $h$ is odd,}\\ 0 & \text{if $h$ is even.}\end{cases}
\end{equation}

\item[\underline{$a=1+2^{r-\eta}k$ with $k\ne 0$ and $r+u < 2\eta$}] Then
\begin{equation}\label{a=1+2algo, caso par r+u le 2eta}
(\gamma,\mathfrak{f}_0)\in\{(2^{\eta-u}(z_1+z_2),2^{r-\eta}sz_2):0\le z_1<2^{r+u-\eta}\text{ and } 0\le z_2<2^u\}.
\end{equation}
Moreover,
\begin{equation}\label{a=1+2algo, caso par r+u le 2eta'}
h \blackdiamond y =\sum_{i=0}^h \binom{h}{i} 2^{(r-\eta)i}k^iy\quad\text{and}\quad f_{\mathfrak{f}_0}(h,h') =  \sum_{\ell=0}^{h-1} \binom{h}{\ell+1}  2^{(r-\eta)\ell} k^{\ell}\mathfrak{f}_0h'.
\end{equation}

\item[\underline{$a=1+2^{r-\eta}k$ with $k\ne 0$ and $r+u\ge 2\eta$}] Then
\begin{equation}\label{a=1+2algo, caso par r>eta+1}
(\gamma,\mathfrak{f}_0)\in\{(2^{\eta-u}z_1,2^{r-\eta} (sz_1+z_2)): 0\le z_1<2^u \text{ and } 0\le z_2<2^{\eta}\}.
\end{equation}
Moreover,
\begin{equation}\label{a=1+2algo, caso par r>eta+1'}
h \blackdiamond y = (1+2^{r-\eta}kh)y\quad\text{and}\quad f_{\mathfrak{f}_0}(h,h') = \mathfrak{f}_0hh'.
\end{equation}

\item[\underline{$a=-1-2^{r-\eta}k$ with $k\ne 0$}] Then
\begin{equation}\label{a=-1-2algo, caso par r>eta+1}
(\gamma,\mathfrak{f}_0)\in \begin{cases} \{(2^{r-1}z_1-2^{\eta-1}z_2,z_2):0\le z_1,z_2<2\} & \text{if $k$ is even,}\\ \{(2^{r-1}z_1,0):0\le z_1<2\} & \text{if $k$ is odd.}\end{cases}
\end{equation}
Moreover,
\begin{equation}\label{a=-1-2algo, caso par r>eta+1'}
\quad\qquad h\blackdiamond y =(-1)^h\sum_{i=0}^h \binom{h}{i} 2^{(r-\eta)i}k^iy\quad\text{and}\quad f_{\mathfrak{f}_0}(h,h') = \sum_{\ell=0}^{h-1}  A_{h\ell} 2^{(r-\eta)\ell}k^{\ell}\mathfrak{f}_0h'.
\end{equation}
\end{description}
\end{proposition}

\begin{proof} Note that $r>\eta+1$ implies $r\ge 3$. Let $A$ be as in Example~\ref{ejemplo}. By Lemma~\ref{para ejemploscon2} and Remark~\ref{A para ejemploscon2}, since~$A$ is the multiplication by an invertible element $a\in \mathds{Z}_{2^r}$ and $A^{p^{\eta}} = \Ide$, we have $a = \pm (1+2^{r-\eta}k)$, with $0\le k<2^{\eta}$. Since~$1^{\times h}=h$, by this and~\eqref{acciones caso trivial}, we have:
\begin{equation}\label{calculo diamante p=2}
h \blackdiamond y =\begin{cases} (1+2^{r-\eta}k)^h y = \sum_{i=0}^h \binom{h}{i} 2^{(r-\eta)i}k^{i} y & \text{if $a = 1+2^{r-\eta}k$,}\\ (-1)^h(1+2^{r-\eta}k)^h y = (-1)^h\sum_{i=0}^h \binom{h}{i} 2^{(r-\eta)i}k^{i}y & \text{if $a = -1-2^{r-\eta}k$;}\end{cases}
\end{equation}
while, from~\eqref{zazaza1}, we obtain
\begin{equation*}
f_{\mathfrak{f}_0}(h,h') = h'\sum_{j=0}^{h-1}a^j \mathfrak{f}_0,
\end{equation*}
Assume that $a=1+2^{r-\eta}k$. Then $a^j = \sum_{l=0}^j \binom{j}{l}2^{(r-\eta)l}k^l$, and so
\begin{equation}\label{cociclo1caso1}
f_{\mathfrak{f}_0}(h,h') = \sum_{j=0}^{h-1} \sum_{\ell=0}^j \binom{j}{\ell} 2^{(r-\eta)\ell}k^l\mathfrak{f}_0h' = \sum_{\ell=0}^{h-1} \sum_{j=\ell}^{h-1} \binom{j}{\ell} 2^{(r-\eta)\ell}k^{\ell}\mathfrak{f}_0h'= \sum_{\ell=0}^{h-1} \binom{h}{\ell+1} 2^{(r-\eta)\ell}k^{\ell}\mathfrak{f}_0h'.
\end{equation}
Assume now that $a = -1-2^{r-\eta}k$. Then $a^j = \sum_{l=0}^j (-1)^j\binom{j}{l}2^{(r-\eta)l}k^l$, and so
\begin{equation}\label{cociclo1caso2}
f_{\mathfrak{f}_0}(h,h') = \sum_{j=0}^{h-1} (-1)^j\sum_{\ell=0}^j \binom{j}{\ell} 2^{(r-\eta)\ell}k^{\ell}\mathfrak{f}_0h'= \sum_{\ell=0}^{h-1} \sum_{j=\ell}^{h-1} (-1)^j\binom{j}{\ell}  2^{(r-\eta)\ell} k^l\mathfrak{f}_0h'= \sum_{\ell=0}^{h-1} A_{h\ell} 2^{(r-\eta)\ell}k^{\ell} \mathfrak{f}_0h'.
\end{equation}
We next compute the cohomology. Assume first that $a=1$. Then, we are in the case~b) of Example~\ref{ejemplo}, with $c=0$ and we have $N(A) = 2^{\eta}$. So,
\begin{equation*}
\ho_{\blackdiamond}^2(H,I)\simeq \frac{\ker T_1\cap \ker T_2}{\ima S} = \frac{\ker(cN(A))}{\ima(2^{\eta}\Ide)}\oplus \ker(2^{\eta}\Ide) = \frac{\mathds{Z}_{2^r}}{2^{\eta}\mathds{Z}_{2^r}}\oplus 2^{r-\eta}\mathds{Z}_{2^r}.
\end{equation*}
The fact that $(\gamma,\mathfrak{f}_0)\in\{(z_1,2^{r-\eta}z_2) : 0\le z_1,z_2<2^{\eta}\}$ follows from this; the first assertion in~\eqref{k=0, a=1, caso par r>eta+1}, from~\eqref{calculo diamante p=2}; and, the second one, from~\eqref{cociclo1caso1}. Assume now that $a=-1$. Then, we are in the case~a) of Example~\ref{ejemplo}, with $c=-2^{\eta-1}$ and we have $N(A) = 0$. Hence,
\begin{equation*}
\ho_{\blackdiamond}^2(H,I) \simeq \frac{\ker \wt{T}_1\cap \ker \wt{T}_2}{\ima \wt{S}} = \ker(A-\Ide)\oplus \frac{\ker(N(A))}{(A-\Ide)I} = 2^{r-1} \mathds{Z}_{2^r} \oplus \frac{\mathds{Z}_{2^r}}{2\mathds{Z}_{2^r}}.
\end{equation*}
The fact that $(\gamma,\mathfrak{f}_0)\in\{(2^{r-1}z_1-2^{\eta-1}z_2,z_2):0\le z_1,z_2<2\}$ follows from this applying the map $\ov{\Phi}$ of~\eqref{Phi barra1}; the first assertion in~\eqref{a=-1, caso par r>eta+1}, from formula~\eqref{calculo diamante p=2}; and, the second one, from formula~\eqref{cociclo1caso2}. Assume now that $a=1+2^{r-\eta}k$, with $k\ne 0$ and $r+u\le 2\eta$. Then, we are in the case~a) of Ex\-ample~\ref{ejemplo}, with $c=2^{2\eta-r-u}s'$, where $0\le s'<2^{r-\eta}$ and $s's\equiv 1\pmod{2^{r-\eta}}$. In fact, a direct~com\-putation shows that $c(A-\Ide) = 2^{2\eta-r-u}s' 2^{r+u-\eta}s = 2^{\eta}ss' = 1$. Computing, we obtain
\begin{equation*}
N(A) = \sum_{j=0}^{2^{\eta}-1} (1+2^{r-\eta}k)^j = \sum_{j=0}^{2^{\eta}-1}\sum_{\ell=0}^j \binom{j}{\ell} 2^{(r-\eta)\ell}k^{\ell} = \sum_{\ell=0}^{2^{\eta}-1} \sum_{j=\ell}^{2^{\eta}-1} \binom{j}{\ell} 2^{(r-\eta)\ell}k^{\ell} = \sum_{\ell=0}^{2^{\eta}-1}\binom{2^{\eta}}{\ell+1} 2^{(r+u-\eta)\ell}s^{\ell}.
\end{equation*}
Write $\ell+1 = 2^tv$ with $2\nmid v$ and $t\ge 0$. Since $2^{\eta-t}\mid \binom{2^{\eta}}{\ell+1}$, the exponent $w$, of $2$ in $\binom{2^{\eta}}{\ell+1} 2^{(r+u-\eta)\ell}$, is greater than or equal to $\eta-t+(r+u-\eta)\ell = r+u-t+(r+u-\eta)(\ell-1)$. Hence, if $\ell\ge 2$, then
\begin{equation*}
w \ge r+u-t+(r+u-\eta)(\ell-1) \le r+u-t+(u+1)(\ell-1) \le  r+u+u(\ell-1)\le r,
\end{equation*}
where we have used that $r>\eta$, and that if $\ell\ge 2$, then $t<\ell$. Consequently,
$$
N(A) = 2^{\eta} + 2^{\eta-1}(2^{\eta}-1)2^{r+u-\eta}s = 2^{\eta}(1 - 2^{r+u-\eta-1}s),
$$
and so
\begin{equation*}
\ho_{\blackdiamond}^2(H,I)\simeq \frac{\ker \wt{T}_1\cap \ker \wt{T}_2}{\ima \wt{S}} = \ker(A-\Ide)\oplus \frac{\ker(N(A))}{(A-\Ide)I} = 2^{\eta-u}\mathds{Z}_{2^r}\oplus\frac{2^{r-\eta}\mathds{Z}_{2^r}}{2^{r+u-\eta}\mathds{Z}_{2^r}}.
\end{equation*}
Applying the map~$\ov{\Phi}$ of~\eqref{Phi barra1} we obtain~\eqref{a=1+2algo, caso par r+u le 2eta}; while~\eqref{a=1+2algo, caso par r+u le 2eta'} follows from formulas~\eqref{calculo diamante p=2} and~\eqref{cociclo1caso1}. Assume now that $a=1+2^{r-\eta}k$, with $k\ne 0$ and $r+u>2\eta$. Then, we are in the case~b) of Ex\-ample~\ref{ejemplo}, with $c=2^{r+u-2\eta}s$, and we have
$$
N(A) = \sum_{\ell=0}^{2^{\eta}-1}\binom{2^{\eta}}{\ell+1} 2^{(r+u-\eta)\ell}s^{\ell}= 2^{\eta}(1-2^{r+u-\eta-1}s),
$$
where the last equality follows arguing as above. So,
\begin{equation*}
\ho_{\blackdiamond}^2(H,I)\simeq \frac{\ker \wt{T}_1\cap \ker \wt{T}_2}{\ima S} = \frac{\ker(cN(A))}{\ima(2^{\eta}\Ide)}\oplus \ker(2^{\eta}\Ide) = \frac{2^{\eta-u}\mathds{Z}_{2^r}}{2^{\eta}\mathds{Z}_{2^r}}\oplus 2^{r-\eta}\mathds{Z}_{2^r}.
\end{equation*}
Applying the map~$\ov{\Phi}$ of \eqref{Phi barra}, we obtain~\eqref{a=1+2algo, caso par r>eta+1}; while~\eqref{a=1+2algo, caso par r>eta+1'} follows from~\eqref{calculo diamante p=2} and~\eqref{cociclo1caso1}, because $2r-2\eta+u\ge r$. Assume finally that $a=-1-2^{r-\eta}k$. Then, we are in the case~a) of Ex\-ample~{ejemplo}, with $c=-2^{\eta-1}(1+2^{r-\eta-1}k)$. In fact
$$
c(A-\Ide) = -2^{\eta-1}(1+2^{r-\eta-1}k)(-2-2^{r-\eta}k) = 2^{\eta}(1+2^{r-\eta-1}k)^2 = 2^{\eta},
$$
where the last equality holds since  $(1+2^{r-\eta-1}k)^2\equiv 1\pmod{2^{r-\eta}}$. Computing, we obtain
\begin{equation*}
N(A) = \sum_{j=0}^{2^{\eta}-1} (-1-2^{r-\eta}k)^j = \sum_{j=0}^{2^{\eta}-1}(-1)^j\sum_{\ell=0}^j \binom{j}{\ell} 2^{(r-\eta)\ell}k^{\ell} = \sum_{\ell=0}^{2^{\eta}-1} \sum_{j=\ell}^{2^{\eta}-1} (-1)^j\binom{j}{\ell} 2^{(r-\eta)\ell}k^{\ell} = \sum_{\ell=0}^{2^{\eta}-1} A_{2^{\eta}\ell} 2^{(r+u-\eta)\ell} s^{\ell}.
\end{equation*}
But, as we saw at the end of the proof of Proposition~\ref{prop 3.12},
$$
A_{2^{\eta}0}=0,\qquad A_{2^{\eta}1}=-2^{\eta-1}\qquad\text{and}\qquad 2^{2\ell-\eta-2} A_{2^{\eta}\ell}\in \mathds{Z}\quad\text{for $2\le \ell<2^{\eta}$.}
$$
Since $(r+u-\eta)(\ell-1)+u-\eta\ge 2\ell-\eta-2$ (because $r>\eta+1$), we have
$$
2^{(r+u-\eta)\ell} A_{2^{\eta}\ell} = 2^r2^{(r+u-\eta)(\ell-1)+u-\eta} A_{2^{\eta}\ell}\in 2^r \mathds{Z}\qquad\text{for $2\le \ell < 2^{\eta}$,}
$$
and so $N(A) = -2^{r+u-1}s$. Consequently,
\begin{equation*}
\ho_{\blackdiamond}^2(H,I)\simeq \frac{\ker \wt{T}_1\cap \ker \wt{T}_2}{\ima \wt{S}} = \ker(A-\Ide)\oplus \frac{\ker(N(A))}{(A-\Ide)I} = \begin{cases} 2^{r-1}\mathds{Z}_{2^r}\oplus\frac{\mathds{Z}_{2^r}}{2\mathds{Z}_{2^r}} &\text{if $k$ is even,}\\ 2^{r-1}\mathds{Z}_{2^r}\oplus 0 &\text{if $k$ is odd.} \end{cases}
\end{equation*}
Applying the map $\ov{\Phi}$ of~\eqref{Phi barra1}, we obtain~\eqref{a=-1-2algo, caso par r>eta+1}; while~\eqref{a=-1-2algo, caso par r>eta+1'} follows from~\eqref{calculo diamante p=2} and~\eqref{cociclo1caso2}.
\end{proof}

\begin{remark} When $a=-1-2^{r-\eta}k$, with $k\ne 0$ and $r+u\ge 2\eta$, the formulas for $\blackdiamond$ and $f_{\mathfrak{f}_0}$ simplify to
$$
h \blackdiamond y = (-1)^h (1+2^{r-\eta}k)y\quad\text{and}\quad f_{\mathfrak{f}_0} = \begin{cases} - 2^{r-\eta-1} k \mathfrak{f}_0 hh' & \text{if $h$ is even,}\\\phantom{-} \mathfrak{f}_0h' + 2^{r-\eta-1} k \mathfrak{f}_0 (h-1)h' & \text{if $h$ is odd.} \end{cases}
$$
\end{remark}

\begin{bibdiv}
\begin{biblist}

\bib{B}{article}{
	title={Extensions, matched products, and simple braces},
	author={Bachiller, David},
	journal={Journal of Pure and Applied Algebra},
	volume={222},
	number={7},
	pages={1670--1691},
	year={2018},
	publisher={Elsevier}
}		

\bib{B2}{article}{
   author={Bachiller, David},
   title={Classification of braces of order $p^3$},
   journal={J. Pure Appl. Algebra},
   volume={219},
   date={2015},
   number={8},
   pages={3568--3603},
   issn={0022-4049},
   review={\MR{3320237}},
   doi={10.1016/j.jpaa.2014.12.013},
}

\bib{CR}{article}{
  title={Regular subgroups of the affine group and radical circle algebras},
  author={Catino, Francesco},
  author={Rizzo, Roberto},
  journal={Bulletin of the Australian Mathematical Society},
  volume={79},
  number={1},
  pages={103--107},
  year={2009},
  publisher={Cambridge University Press}
}

\bib{CJO}{article}{
	author={Ced{\'o}, Ferran},
	author={Jespers, Eric},
	author={Okni{\'n}ski, Jan},
	title={Retractability of set theoretic solutions of the Yang--Baxter equation},
	journal={Advances in Mathematics},
	volume={224},
	number={6},
	pages={2472--2484},
	year={2010},
	publisher={Elsevier}
}

\bib{CJO1}{article}{
  title={Braces and the Yang--Baxter equation},
  author={Ced{\'o}, Ferran},
  author={Jespers, Eric},
  author={Okni{\'n}ski, Jan},
  journal={Communications in Mathematical Physics},
  volume={327},
  number={1},
  pages={101--116},
  year={2014},
  publisher={Springer}
}

\bib{CJR}{article}{
title={Involutive Yang-Baxter groups},
  author={Ced{\'o}, Ferran},
  author={Jespers, Eric},
  author={Del Rio, Angel},
  journal={Transactions of the American Mathematical Society},
  volume={362},
  number={5},
  pages={2541--2558},
  year={2010}
}

\bib{Ch}{article}{
	title={Fixed-point free endomorphisms and Hopf Galois structures},
	author={Childs, Lindsay},
	journal={Proceedings of the American Mathematical Society},
	volume={141},
	number={4},
	pages={1255--1265},
	year={2013},
	review={\MR{3008873}} 	
}

\bib{De1}{article}{
	title={Set-theoretic solutions of the Yang--Baxter equation, RC-calculus, and Garside germs},
	author={Dehornoy, Patrick},
	journal={Advances in Mathematics},
	volume={282},
	pages={93--127},
	year={2015},
	publisher={Elsevier},
	review={\MR{3374524}}
}

\bib{DDM}{article}{
	title={Garside families and Garside germs},
	author={Dehornoy, Patrick},
	author={Digne, Fran{\c{c}}ois},
	author={Michel, Jean},
	journal={Journal of Algebra},
	volume={380},
	pages={109--145},
	year={2013},
	publisher={Elsevier},
	review={\MR{3023229}}
}		

\bib{Di}{article}{
   author={Dietzel, Carsten},
   title={Braces of order $p^2q$},
   journal={J. Algebra Appl.},
   volume={20},
   date={2021},
   number={8},
   pages={Paper No. 2150140, 24},
   issn={0219-4988},
   review={\MR{4297324}},
   doi={10.1142/S0219498821501401},
}

\bib{Do}{article}{
   author={Pulji\'c, Dora},
   title={Classification of braces of cardinality $p^4$},
   journal={J. Algebra},
   volume={660},
   date={2024},
   pages={1--33},
   issn={0021-8693},
   review={\MR{4778842}},
   doi={10.1016/j.jalgebra.2024.07.007},
}

\bib{Dr}{article}{
   author={Drinfeld, Vladimir G.},
   title={On some unsolved problems in quantum group theory},
   conference={
      title={Quantum groups},
      address={Leningrad},
      date={1990},
   },
   book={
      series={Lecture Notes in Math.},
      volume={1510},
      publisher={Springer, Berlin},
   },
   date={1992},
   pages={1--8},
   doi={10.1007/BFb0101175},
}

\bib{ESS}{article}{
	author={Etingof, Pavel},
	author={Schedler, Travis},
	author={Soloviev, Alexandre},
	title={Set-theoretical solutions to the quantum Yang-Baxter equation},
	journal={Duke Math. J.},
	volume={100},
	date={1999},
	number={2},
	pages={169--209},
	issn={0012-7094},
	review={\MR{1722951}},
	doi={10.1215/S0012-7094-99-10007-X},
}

\bib{GI2}{article}{
	title={Set-theoretic solutions of the Yang--Baxter equation, braces and symmetric groups},
	author={Gateva-Ivanova, Tatiana},
	journal={Advances in Mathematics},
	volume={338},
	pages={649--701},
	year={2018},
	publisher={Elsevier},
	review={\MR{3861714}}
}

\bib{GI4}{article}{
	title={Quadratic algebras, Yang--Baxter equation, and Artin--Schelter regularity},
	author={Gateva-Ivanova, Tatiana},
	journal={Advances in Mathematics},
	volume={230},
	number={4-6},
	pages={2152--2175},
	year={2012},
	publisher={Elsevier},
	review={\MR{2927367}}
}

\bib{GIVB}{article}{
	title={Semigroups ofI-Type},
	author={Gateva-Ivanova, Tatiana},
	author={Van den Bergh, Michel},
	journal={Journal of Algebra},
	volume={206},
	number={1},
	pages={97--112},
	year={1998},
	publisher={Elsevier},
	review={\MR{1637256}}
}

\bib{GGV1}{article}{
   author={Guccione, Jorge A.},
   author={Guccione, Juan J.},
   author={Valqui, Christian},
   title={Extensions of linear cycle sets and cohomology},
   journal={Eur. J. Math.},
   volume={9},
   date={2023},
   number={1},
   pages={Paper No. 15, 29},
   issn={2199-675X},
   review={\MR{4551665}},
   doi={10.1007/s40879-023-00592-6},
}

\bib{GGV2}{article}{
  author={Guccione, Jorge A.},
  author={Guccione, Juan J.},
  author={Valqui, Christian},
  title={Extensions of a family of Linear Cycle Sets},
  journal={arXiv preprint arXiv:2501.08357},
  year={2025}
}

\bib{JO}{article}{
	title={Monoids and groups of I-type},
	author={Jespers, Eric},
	author={Okni{\'n}ski, Jan},
	journal={Algebras and representation theory},
	volume={8},
	number={5},
	pages={709--729},
	year={2005},
	publisher={Springer},
	review={\MR{2189580}}	
}

\bib{LV}{article}{
	author={Lebed, Victoria},
	author={Vendramin, Leandro},
	title={Cohomology and extensions of braces},
	journal={Pacific Journal of Mathematics},
	volume={284},
	number={1},
	pages={191--212},
	year={2016},
	publisher={Mathematical Sciences Publishers}
}

\bib{Mu}{article}{
   author={Mukherjee, Snehashis},
   title={Classification of right nilpotent $\mathbb{F_p}$-braces of cardinality
   $p^5$},
   journal={J. Algebra},
   volume={665},
   date={2025},
   pages={503--537},
   issn={0021-8693},
   review={\MR{4832032}},
   doi={10.1016/j.jalgebra.2024.11.009},
}

\bib{R2}{article}{
	title={Braces, radical rings, and the quantum Yang--Baxter equation},
	author={Rump, Wolfgang},
	journal={Journal of Algebra},
	volume={307},
	number={1},
	pages={153--170},
	year={2007},
	publisher={Elsevier}
}

\bib{R3}{article}{
	title={The brace of a classical group},
	author={Rump, Wolfgang},
	journal={Note di Matematica},
	volume={34},
	number={1},
	pages={115--145},
	year={2014},
}

\bib{S}{article}{
   author={Soloviev, Alexander},
   title={Non-unitary set-theoretical solutions to the quantum Yang-Baxter
   equation},
   journal={Math. Res. Lett.},
   volume={7},
   date={2000},
   number={5-6},
   pages={577--596},
   doi={10.4310/MRL.2000.v7.n5.a4},
}

\bib{Spivey}{book}{
   author={Spivey, Michael Z.},
   title={The art of proving binomial identities},
   series={Discrete Mathematics and its Applications (Boca Raton)},
   publisher={CRC Press, Boca Raton, FL},
   date={2019},
   pages={xiv+368},
   isbn={978-0-8153-7942-3},
   review={\MR{3931743}},
   doi={10.1201/9781351215824},
}

\end{biblist}
\end{bibdiv}

\end{document}